\documentclass[11pt,a4paper]{amsart}
 
\usepackage{amsmath,amssymb}
\usepackage{amsthm} 
\usepackage[utf8]{inputenc} 
\usepackage{a4wide}
\usepackage{graphicx}
\usepackage{enumitem}
\usepackage{xcolor}

\usepackage[style=numeric, 
  sorting=nyt,
  giveninits=true,
  defernumbers=true,
  backend=biber]{biblatex}

\DeclareFieldFormat{titlecase}{#1}

\newcommand{\UP}{\blacktriangle}
\newcommand{\DOWN}{\blacktriangledown}
\newcommand{\Up}{\vartriangle}
\newcommand{\Down}{\triangledown}

\theoremstyle{plain}
\newtheorem{theorem}{Theorem}[section]
\newtheorem{proposition}[theorem]{Proposition}
\newtheorem{lemma}[theorem]{Lemma}
\newtheorem{corollary}[theorem]{Corollary}

\theoremstyle{definition}
\newtheorem{definition}[theorem]{Definition}
\newtheorem{example}[theorem]{Example}

\theoremstyle{remark}
\newtheorem{remark}[theorem]{Remark}

\setlist[itemize]{label=$\circ$, topsep=2pt}
\setlist[enumerate]{topsep=2pt, itemsep=1pt}
\setlist[description]{
  font=\normalfont\itshape,
  topsep=2pt, itemsep=1pt
}

\numberwithin{equation}{section}
\begin{document}

\date{\today}

\title[Characterising some Stone and Kleene algebras of rough sets]%
{Characterising regular double Stone and pseudocomplemented Kleene algebras in completions of rough sets induced by reflexive relations}

\author{Jouni J\"{a}rvinen}
\address[J.~J{\"a}rvinen]{Software Engineering, LUT School of Engineering Science, Mukkulankatu~19, 15210 Lahti, Finland}
\email{jouni.jarvinen@lut.fi}

\author{S\'{a}ndor Radeleczki}
\address[S.~Radeleczki]{Institute of Mathematics, University of Miskolc, 3515 Miskolc-Egyetemv\'{a}ros, Hungary}
\email{sandor.radeleczki@uni-miskolc.hu}

\keywords{Rough set, 
Dedekind--MacNeille completion, 
spatial lattice,
completely join-irreducible element, 
core element, 
regular pseudocomplemented Kleene algebra, 
double Stone algebra}
\subjclass{Primary 68T37,  06D30; Secondary 06B23,  03E20}

\begin{abstract}
We consider Kleene and Stone algebras defined on the completion DM(RS) of the ordered set of rough sets induced by a reflexive relation. We focus on the cases where the completion forms a spatial and completely distributive lattice. We derive the conditions under which DM(RS) is a regular pseudocomplemented Kleene algebra and a completely distributive double Stone algebra. Finally, we describe the reflexive relations for which DM(RS) forms a regular double Stone algebra, which is the same structure as in the case of equivalences. Our results generalise earlier findings on algebras of rough sets induced by equivalences, quasiorders, and tolerance relations.
\end{abstract}

\maketitle

\hfill%
Dedicated to the memory of E.~Tam\'{a}s Schmidt

\section{Introduction} \label{sec:intro}
Kleene and Stone algebras are essential in the study of non-classical 
logics. They generalise Boolean algebras by relaxing certain 
constraints to handle negation and ``intermediate'' truth values. In 
this paper, we show how pseudocomplemented Kleene algebras, Stone algebras, and regular double Stone algebras can be defined in terms 
of rough sets induced by reflexive relations.

Rough Set Theory offers a powerful foundation for various contemporary soft computing methods.
Rough sets were introduced by Z.~Pawlak in \cite{Pawl82}. 
In rough set theory, our knowledge about the elements of a 
universe $U$ is given in terms of an equivalence relation $E$. 
Two elements $x,y \in U$ are $E$-related if they are indistinguishable with respect to the available information.

The literature contains numerous studies in which 
information about objects is given in terms of other types 
of relations generalising equivalences. For instance, rough approximations defined by an arbitrary  binary relation were considered as early as \cite{Yao96}. 
If $R$ is a given binary relation on $U$, then for any 
subset $X \subseteq U$, the lower
approximation of $X$ is defined as
\[ X^\DOWN := \{x \in U \mid R(x) \subseteq X \} \]
and the upper approximation of X is
\[ X^\UP := \{x \in U \mid R(x) \cap X \neq \emptyset\} \]
where $R(x) := \{y \in U \mid (x,y) \in R \}$.
The \emph{rough set} of $X \subseteq U$ is the pair 
$(X^\DOWN,X^\UP)$. The set of all rough sets is denoted
by $\mathrm{RS}$. The set $\mathrm{RS}$ is ordered by the coordinatewise order:
\begin{equation*}
(X^\DOWN,X^\UP)\leq (Y^\DOWN,Y^\UP) \iff 
X^\DOWN\subseteq Y^\DOWN \text{ and } X^\UP\subseteq Y^\UP.
\end{equation*}

The structure of $\mathrm{RS}$ is well studied when
$R$ is an equivalence. J.~Pomykala and {J.A.}~Pomykala 
\cite{PomPom88} proved that $\mathrm{RS}$ is a complete lattice 
forming a Stone algebra. This result was improved in \cite{Comer93} 
by {S.D.}~Comer, who showed that $\mathrm{RS}$ forms a regular double 
Stone algebra. In \cite{GeWa92}, M.~Gehrke and E.~Walker proved that 
$\mathrm{RS}$ is isomorphic to 
$\mathbf{2}^I \times \mathbf{3}^K$, where $I$ is the set of singleton 
$R$-classes, $K$ is the set of non-singleton classes, and 
$\mathbf{2}$ and
$\mathbf{3}$ are the two- and three-element chains, respectively.
In addition, $\mathrm{RS}$ forms a three-valued {\L}ukasiewicz
algebra, as shown by P.~Pagliani \cite{Pagl97}.

Rough sets defined by quasiorders (reflexive and transitive relations) have been investigated by several authors; see 
\cite{Ganter07, JarvRadeRivi24, Kortelainen1994, 
KumBan2015,  Umadevi13, Qiao2012}. If $R$ is a quasiorder,
a Nelson algebra can be defined on $\mathrm{RS}$ \cite{JarRad11}.
Rough sets defined by tolerances (reflexive and symmetric relations)
are studied in \cite{Jarvinen01, JarRad14,SkowStep96, SlowVand00}, 
for example. Generally, for a tolerance, $\mathrm{RS}$ is not 
necessarily a lattice \cite{Jarvinen01}.
In cases where $R$ is a tolerance induced by an irredundant covering 
of $U$, $\mathrm{RS}$ forms a regular pseudocomplemented Kleene
algebra. Although symmetric and transitive relations lead to an $\mathrm{RS}$ structure identical to that of equivalences, 
transitivity alone does not guarantee that $\mathrm{RS}$ is a lattice \cite{Jarv04}. 
Antisymmetric and reflexive relations were also considered  in \cite[Theorem~25]{ManRad20}.

Because $\mathrm{RS}$ lacks a complete lattice structure for certain relations, progress 
in rough set algebraic structures was temporarily delayed.
In \cite{Umadevi2015}, D.~Umadevi presented the
Dedekind--MacNeille completion of $\mathrm{RS}$ for arbitrary binary 
relations, denoted here by $\mathrm{DM(RS)}$. 
Studying completions is important because the ordered set 
$\mathrm{RS}$ is embedded 
within its completion $\mathrm{DM(RS)}$. Consequently, the properties of  the completion also characterise the structure of $\mathrm{RS}$. 
In many cases, $\mathrm{RS}$ is already a complete lattice, which means that it coincides with its completion. 
By focusing on the completion, we simplify the analysis. We no longer need to verify the completeness of 
$\mathrm{RS}$ itself, and this allows for a broader perspective.

In this work, we consider rough set structures induced by reflexive relations.  
Reflexivity can be viewed as an indispensable feature of indiscernibility or similarity, since each object is inherently similar to itself \cite{SlowVand00}.  
In fact, the reflexivity of $R$ is equivalent to a natural
requirement for rough approximations: namely, that 
$X^\DOWN \subseteq X \subseteq X^\UP$ holds for each subset 
$X\subseteq U$. In \cite{SyauJia12}, the approximation theory of 
reflexive neighbourhood systems is studied.

Reflexive relations can be viewed as \emph{directional similarity relations}. A.~Tversky states in \cite{Tversky77} that
similarity should not be treated as a symmetric relation. Statements such as
``$a$ is like $b$'' are directional, with $a$ as 
the subject and $b$ as the referent. This is not equivalent, in general,
to the converse similarity statement ``$b$ is like $a$''. Tversky also provides 
concrete examples, such as
``the portrait resembles the person'' rather than ``the person resembles the
portrait'', and ``the son resembles the father'' rather than ``the father resembles
the son''. It is also clear that similarity relations are
not necessarily transitive.

This study is a continuation of our paper \cite{JarRad25}, where, by describing the
completely join-irreducible elements of $\mathrm{DM(RS)}$ for any reflexive relation $R$, 
we characterised the case when $\mathrm{DM(RS)}$ is a spatial completely distributive
lattice. There, we pointed out that even for a non-transitive reflexive
relation, $\mathrm{DM(RS)}$ can form a Nelson algebra. In the present paper, by
describing the pseudocomplements and dual pseudocomplements in the case of a
completely distributive $\mathrm{DM(RS)}$, we characterise those reflexive relations 
$R$ on $U$ for which $\mathrm{DM(RS)}$ forms a regular pseudocomplemented Kleene algebra or a double Stone lattice.
Note that in this work, we restrict ourselves to the case where 
$\mathrm{DM(RS)}$ is completely distributive and spatial. This requirement is natural because, in the cases where $R$ is 
an equivalence, a quasiorder, or a tolerance induced by an 
irredundant covering, $\mathrm{RS=DM(RS)}$ forms a completely 
distributive and spatial lattice. Therefore, this study can be 
viewed as a generalisation of these aforementioned cases.
One of the main questions that has arisen is whether
the algebraic properties of $\mathrm{RS}$ proven in the case of an equivalence
relation could also occur in the case of a more general relation. 
The results seem to suggest that this is not possible.

Surprisingly, we found a specific class of reflexive relations with 
the property that $\mathrm{DM(RS)}$ forms a regular double Stone lattice, 
which is more general than the class of equivalence relations. A simple example of this is the following: 
For $U = \{1, 2, 3\}$ with $R(1) = U$, $R(2) = \{2\}$, and $R(3) = \{1, 3\}$, 
the lattice $\mathrm{DM(RS)}$ equals $\mathrm{RS}$ and it is a regular double Stone algebra isomorphic to 
$\mathbf{2} \times \mathbf{3}$ (Figure~\ref{fig:two_three}).
\begin{figure}[ht]
\includegraphics[width=45mm]{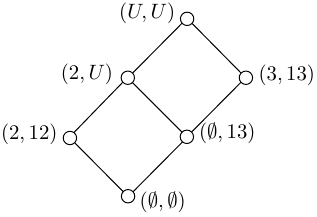}
\caption{
$\mathrm{RS}$ is isomorphic to $\mathbf{2} \times \mathbf{3}$.
\label{fig:two_three}}
\end{figure}
Note that throughout this paper, sets in figures are often 
denoted simply by the sequence of their elements 
(e.g., 123 for $\{1,2,3\}$).

The main motivation for this study stems from the need to establish an algebraic foundation for rough set systems under non-symmetric and non-transitive information environments. While classical Pawlak rough sets are based on equivalence relations, practical soft computing applications, such as directed information networks, asymmetric preference modelling, and directional similarity in feature spaces, inherently violate symmetry and transitivity. By determining the exact structural boundaries under which reflexive approximations yield regular pseudocomplemented Kleene and double Stone algebras, this work addresses a long-standing theoretical gap regarding whether classical algebraic properties can persist in generalised rough set settings.

From an application perspective, characterising these algebraic structures provides crucial mathematical tools for multi-criteria decision analysis (MCDA), incomplete information processing, and formal concept analysis (FCA). Specifically, identifying when $\mathrm{DM(RS)}$ forms a regular Stone or Kleene algebra guarantees the existence of well-behaved logical connectives and negation operators. This allows automated reasoning tools, spatial reasoning systems, and fuzzy-rough rule induction algorithms to operate consistently on asymmetric similarity data without suffering from structural lattice breakdown.

\section{Rough approximations and rough sets} \label{sec:rough_recall}

In this section, we recall from~\cite{Jarvinen2007, JarRad25} the basic results related to the rough approximation operators,
the ordered set of rough sets, and its completion.

Let $U$ be a set. For each $X \subseteq U$, let the lower 
approximation $X^\DOWN$ and the upper approximation $X^\UP$ be defined
as in Section~\ref{sec:intro}. We denote by $\breve{R}$
the \emph{inverse relation} of $R$. The lower and upper
approximations defined by $\breve{R}$ are denoted 
by $X^\Down$ and $X^\Up$, respectively.

Let $\wp(U)$ be the family of all subsets of $U$. 
We define the families of  approximations as follows:
\begin{gather*}
\wp(U)^\DOWN := \{ X^\DOWN \mid X \subseteq U\}, \qquad \wp(U)^\UP := \{ X^\UP \mid X \subseteq U\}, \\
\wp(U)^\Down:= \{ X^\Down \mid X \subseteq U\}, \qquad \wp(U)^\Up := \{ X^\Up \mid X \subseteq U\}. 
\end{gather*}

Let $P$ and $Q$ be ordered sets. A pair
$(f,g)$ of maps $f \colon P \to Q$ and $g \colon Q \to P$
is a \emph{Galois connection} or a \emph{residuated pair} between $P$ and $Q$ if
$f(p) \leq q \iff p \leq g(q)$ for all $p \in P$ and $q \in Q$.
The essential properties and results on Galois connections
can be found in~\cite{Blyth2005,Davey02,Erne_primer}, for example.

The pairs of maps $({^\UP},{^\Down})$ and $({^\Up},{^\DOWN})$ are 
Galois connections on the complete lattice $(\wp(U),\subseteq)$ as noted in~\cite{Jarvinen2007}, for instance. 
This means that if $R$ is an arbitrary binary relation on
$U$, the following facts hold for all $X,Y \subseteq U$ and
$\mathcal{H} \subseteq \wp(U)$: 

\begin{enumerate}[label={\rm (GC\arabic*)}]
    \item $\emptyset^\UP = \emptyset^\Up = \emptyset$ and
    $U^\DOWN = U^\Down = U$.
    \item $X^{\Down\UP} \subseteq X \subseteq X^{\UP\Down}$ and
    $X^{\DOWN\Up} \subseteq X \subseteq X^{\Up\DOWN}$.
    \item $X \subseteq Y$ implies $X^\DOWN \subseteq Y^\DOWN$, 
    $X^\Down \subseteq Y^\Down$, $X^\UP \subseteq Y^\UP$, 
    $X^\Up \subseteq Y^\Up$.
    \item $(\bigcup \mathcal{H})^\UP = 
    \bigcup \{X^\UP \mid X \in \mathcal{H}\}$
    and 
    $(\bigcup \mathcal{H})^\Up = \bigcup\{X^\Up \mid X \in \mathcal{H}\}$.
    \item $(\bigcap \mathcal{H})^\DOWN = 
    \bigcap \{X^\DOWN \mid X \in \mathcal{H}\}$
    and 
    $(\bigcap \mathcal{H})^\Down = \bigcap\{X^\Down \mid X \in \mathcal{H}\}$.
    \item $X^{\UP \Down \UP} = X^\UP$,
    $X^{\Up \DOWN \Up} = X^\Up$,
    $X^{\DOWN \Up \DOWN} = X^\DOWN$,
    $X^{\Down \UP \Down} = X^\Down$.
    \item $(\wp(U)^\UP,\subseteq) \cong (\wp(U)^\Down,\subseteq)$
     and 
    $(\wp(U)^\Up,\subseteq) \cong (\wp(U)^\DOWN,\subseteq)$.
\end{enumerate}
The operations $^\UP$ and $^\DOWN$ are mutually \emph{dual}, and the same holds for $^\Up$ and $^\Down$, that is, 
for any $X \subseteq U$,
\begin{equation} \label{eq:dual}
X^{c \UP} = X^{\DOWN c},
X^{c \DOWN} = X^{\UP c},
X^{c \Up} = X^{\Down c}, 
X^{c \Down} = X^{\Up c},
\end{equation}
where $X^c$ denotes the complement $U \setminus X$ of $X$. 

Because of the duality, $(\wp(U)^\UP,\subseteq) \cong (\wp(U)^\DOWN,\supseteq)$
and $(\wp(U)^\Up,\subseteq) \cong (\wp(U)^\Down,\supseteq)$. Therefore, by combining with (GC7), 
we have the isomorphisms: 
\begin{equation}\label{eq:isomorphisms}
(\wp(U)^\UP,\subseteq) \cong (\wp(U)^\DOWN,\supseteq) \cong
(\wp(U)^\Up,\supseteq) \cong (\wp(U)^\Down,\subseteq).
\end{equation}

If $R$ is a \emph{reflexive} relation
on $U$, that is, $(x,x) \in R$ for all $x \in U$, then
\begin{enumerate}[label={\rm (Ref\arabic*)}]
\item $\emptyset^\DOWN = \emptyset^\Down = \emptyset$ and  
$U^\UP = U^\Up = U$.
\item $X^\DOWN \subseteq X \subseteq X^\UP$ and
$X^\Down \subseteq X \subseteq X^\Up$ for any $X \subseteq U$.
\end{enumerate}
\medskip%
Because $({^\Up}, {^\DOWN})$ is a Galois connection, 
$\wp(U)^\DOWN$ is a complete lattice such that 
\[
\bigwedge_{i \in I} X_i = \bigcap_{i \in I} X_i
\text{\quad and \quad}
\bigvee_{i \in I} X_i = 
\big ( \bigcup_{i \in I} X_i \big)^{\Up\DOWN}
\]
for all $\{X_i\}_{i \in I} \subseteq \wp(U)^\DOWN$.
Similarly, $\wp(U)^\UP$ is a complete lattice such that 
\[
\bigwedge_{i \in I} Y_i = 
\big ( \bigcap_{i \in I} Y_i \big )^{\Down \UP}
\text{\quad and \quad}
\bigvee_{i \in I} Y_i = 
\bigcup_{i \in I} Y_i 
\]
for any $\{Y_i\}_{i \in I} \subseteq \wp(U)^\UP$. We denote by $\mathrm{RS}$ the set of all rough sets, that is,
\[ \mathrm{RS} := \{ (X^\DOWN, X^\UP) \mid X \subseteq U \}. \]
The set $\mathrm{RS}$ is ordered coordinatewise by
\[ (X^\DOWN, X^\UP) \leq (Y^\DOWN, Y^\UP) \overset{\rm def}{\iff} 
X^\DOWN \subseteq Y^\DOWN \ \text{and}  \ X^\UP \subseteq Y^\UP. \]
It is known that $\mathrm{RS}$ is not always a lattice if $R$ is a 
reflexive and symmetric binary relation; see~\cite{Jarvinen01}, for example.

The \emph{Dedekind--MacNeille completion} of an ordered set is the smallest complete lattice containing it; see~\cite{Davey02},
for example. We denote the Dedekind--MacNeille completion of $\mathrm{RS}$ by $\mathrm{DM(RS)}$. 
D.~Umadevi \cite{Umadevi2015} has proved that for any binary relation $R$ on $U$,
\[ \mathrm{DM(RS)} = \{ (A,B) \in \wp(U)^\DOWN \times \wp(U)^\UP \mid A^{\Up \UP} \subseteq B 
\text{ and }  A \cap \mathcal{S} = B \cap \mathcal{S} \} .\]
Here 
\[ \mathcal{S} := 
\{ x \in U \mid R(x) = \{z\} \text{ for some $z \in U$} \}. \]
The elements of $\mathcal{S}$ are called \emph{singletons}.
This means that $x \in \mathcal{S}$ if and only if
$|R(x)| = 1$. Note that if $R$ is reflexive, then $x \in \mathcal{S}$
if and only if $R(x) = \{x\}$.

Next, we present an example that shows how reflexive directional similarity relations arise in many-valued information systems.

\begin{example}
In \cite{Ziarko1993}, W.~Ziarko introduced the relative degree of misclassification of $X$ with 
respect to $Y$ by
\[ c(X,Y) := 
\left \{ 
\begin{array}{ll}
1 - |(X \cap Y)| \, / \, |X|, & \text{if $X \neq \emptyset$}; \\
0, & \text{if $X = \emptyset$},
\end{array}  \right .
\]
where $|Z|$ stands for the cardinality of the set $Z$,
and $X$ and $Y$ are some finite sets.
This is interpreted so that if we were to classify all 
elements of $X$ into set $Y$, then $c(X,Y) \cdot 100$ 
percent of cases would make a classification error.

In standard set theory, $A \subseteq B$ only if every 
element of $A$ is in $B$. In 
\emph{variable precision rough set theory}
(VPRS), this is relaxed using a precision parameter 
$\beta$. The \emph{$\beta$-majority inclusion relation}
is defined as $X \subseteq_\beta Y \iff
c(X,Y) \leq \beta$.

The parameter $\beta$ typically ranges from $0$ to
$0.5$. If $\beta = 0$, the relation 
is the standard set-inclusion relation. For $\beta > 0$,
the relation $\subseteq_\beta$ allows for ``misclassified'' 
elements, making the model more robust against outliers or data errors in large datasets.

In variable precision rough set theory, the set-inclusion
relation is replaced by ${\subseteq}_\beta$ 
when defining the approximations. For an equivalence $E$,
the $\beta$-lower approximation of $X$ is the set of
elements $x$ such that $[x]_E \subseteq_\beta X$,
where $[x]_E$ is the $E$-class of $x$.
Similarly, the $\beta$-negative region of $X$ is the set 
of elements $x$ satisfying $[x]_E \subseteq_\beta X^c$.

We may use the relation ${\subseteq}_\beta$ in
a slightly different way. Let $(U,A)$
be a many-valued information system introduced
by E.~Or{\l}owska and Z.~Pawlak in \cite{OrlPaw84}.
Here, $U$ is the set of objects and  $A$ is the set 
of attributes. Each attribute $a \in A$ is a mapping 
$a \colon U \to \wp(V_a)$, where $V_a$ is
the value set of $a$.

For an attribute $a \in A$, we define a relation $\mathrm{sim}_\beta(a)$ on $U$ 
by setting 
$(x,y) \in \mathrm{sim}_\beta(a) 
\iff a(x) \subseteq_\beta a(y)$.
As noted by Ziarko, $\subseteq_\beta$ is reflexive, but generally neither symmetric nor transitive. Consequently,
$\mathrm{sim}_\beta(a)$ is also reflexive but not necessarily symmetric or transitive. This implies that 
$\mathrm{sim}_\beta(a)$ represents 
\emph{directional similarity}: 
$(x,y) \in \mathrm{sim}_\beta(a)$ indicates that the 
$a$-values of $x$ are included in those of $y$, provided 
that a certain amount of misclassification is allowed. 
Essentially, $x$ is $\beta$-similar to $y$ because $x$ does 
not introduce $a$-values outside of $a(y)$, given the 
$\beta$ tolerance.

For instance, if $a$ is the attribute ``knowledge of
languages'', then $(x,y) \in \mathrm{sim}_\beta(a)$
means that $x$ can be considered similar to $y$ with respect to spoken languages. 
This is because $y$ appears to speak all languages that $x$ speaks, allowing for some uncertainty provided by $\beta$.
\end{example}

Our following lemma shows that for each $A \in \wp(U)^\DOWN$,
there is a pair in $\mathrm{DM(RS)}$ containing $A$ as the first
element. An analogous claim holds for $B \in \wp(U)^\UP$.

\begin{lemma} \label{lem:exists_pair}
Let $R$ be a reflexive relation. 
\begin{enumerate}[label={\rm (\roman*)}]
\item If $A \in \wp(U)^\DOWN$, then $(A,A^{\Up\UP})$ belongs to $\mathrm{DM(RS)}$.
\item If $B \in \wp(U)^\UP$, then $(B^{\Down\DOWN},B)$ belongs to $\mathrm{DM(RS)}$.
\end{enumerate}
\end{lemma}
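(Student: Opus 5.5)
We verify the three conditions in Umadevi's description of $\mathrm{DM(RS)}$ directly. For (i), let $A \in \wp(U)^\DOWN$ and put $B := A^{\Up\UP}$. Since $B = (A^\Up)^\UP$, we have $B \in \wp(U)^\UP$, and $A^{\Up\UP} \subseteq B$ holds trivially. It remains to show $A \cap \mathcal{S} = B \cap \mathcal{S}$.

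The inclusion $A \cap \mathcal{S} \subseteq B \cap \mathcal{S}$ follows from reflexivity. By (Ref2), $A \subseteq A^\Up \subseteq A^{\Up\UP} = B$.

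For the reverse inclusion, take $x \in \mathcal{S}$ with $x \in A^{\Up\UP}$. Since $R$ is reflexive, $R(x) = \{x\}$. By definition of ${}^\UP$, $R(x) \cap A^\Up \neq \emptyset$, so $x \in A^\Up$. Now write $A = X^\DOWN$. Then $A^\Up = X^{\DOWN\Up} \subseteq X$ by (GC2), so $x \in X$. Because $R(x) = \{x\} \subseteq X$, we get $x \in X^\DOWN = A$. This step uses that $A$ is itself a lower approximation, and it is the only one requiring any care.

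Part (ii) is dual. Let $B \in \wp(U)^\UP$ and put $A := B^{\Down\DOWN} \in \wp(U)^\DOWN$. Write $B = Y^\UP$.

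First we check $A^{\Up\UP} \subseteq B$. By (GC2), $(B^\Down)^{\DOWN\Up} \subseteq B^\Down$, so $A^\Up \subseteq B^\Down$. Applying ${}^\UP$ and (GC3) gives $A^{\Up\UP} \subseteq B^{\Down\UP} \subseteq B$, again by (GC2).

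Next, $A \cap \mathcal{S} \subseteq B \cap \mathcal{S}$ holds because $A \subseteq B^\Down \subseteq B$ by (Ref2).

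Conversely, take $x \in B \cap \mathcal{S}$, so $R(x) = \{x\}$. Since $x \in Y^\UP$, we have $R(x) \cap Y \ne \emptyset$, hence $x \in Y$. To show $x \in B^\Down$, take any $y$ with $(y,x) \in R$; then $x \in R(y) \cap Y$, so $y \in Y^\UP = B$. Thus $\breve R(x) \subseteq B$, i.e.\ $x \in B^\Down$. Finally $R(x) = \{x\} \subseteq B^\Down$ gives $x \in B^{\Down\DOWN} = A$.

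Alternatively, (ii) can be obtained from (i) via the complement duality \eqref{eq:dual}. The direct argument above is shorter, so we use it.
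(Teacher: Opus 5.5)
Your proof is correct and follows the paper's argument: you verify Umadevi's conditions directly. Your appeal to (GC2) in the form $X^{\DOWN\Up} \subseteq X$ is exactly the element chase the paper uses to get $x \in C$ from some $y \in A \cap \breve{R}(x)$. The paper only says that (ii) is analogous; your explicit treatment of it, including the Galois-connection check that $B^{\Down\DOWN\Up\UP} \subseteq B$, is also correct.
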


\begin{proof}
(i) Let $A \in \wp(U)^\DOWN$. Note that $A^{\Up\UP}$ belongs 
to $\wp(U)^\UP$ and $A^{\Up \UP} \subseteq A^{\Up \UP}$ 
holds trivially. Also the inclusion  
$A \cap \mathcal{S} \subseteq A^{\Up \UP} \cap \mathcal{S}$ 
is immediate since $A \subseteq A^{\Up\UP}$.
For the reverse, suppose $x \in A^{\Up \UP} \cap \mathcal{S}$. 
Because $x \in \mathcal{S}$, $\{x\} = R(x) \cap A^\Up$.
Therefore, there is $y \in A \cap \breve{R}(x)$. Since
$A \in \wp(U)^\DOWN$, $A = C^\DOWN$ for some $C \subseteq U$.
Since $y \in A$, $R(y) \subseteq C$. Furthermore, $y \in \breve{R}(x)$ 
implies $x \in R(y)$ and $x \in C$. Given that $R(x) = \{x\}$, we have
$R(x) \subseteq C$ and $x \in C^\DOWN = A$. This confirms
$x \in A \cap \mathcal{S}$. Therefore,
$A \cap \mathcal{S} = A^{\Up \UP} \cap \mathcal{S}$.

Claim (ii) can be proved analogously. 
\end{proof}

The meets and joins are formed in $\mathrm{DM(RS)}$ as
\begin{equation} \label{Eq:Meet}
\bigwedge \{  ( A_i, B_i ) \mid i \in I \}  =
\Big ( \bigcap_{i \in I} A_i, \big ( \bigcap_{i \in I} B_i \big )^{\Down \UP} \Big )
\end{equation}
and
\begin{equation} \label{Eq:Join}
\bigvee \{  ( A_i, B_i ) \mid i \in I \}  =
\Big ( \big ( \bigcup_{i \in I} A_i \big )^{\Up \DOWN} , 
\bigcup_{i \in I} B_i \Big )
\end{equation}
for all $\{(A_i,B_i) \mid i \in I\} \subseteq \mathrm{DM(RS)}$.

A \emph{De~Morgan algebra} $(L,\vee,\wedge,{\sim},0,1)$ is
an algebra such that $(L,\vee,\wedge,0,1)$  
is a bounded distributive lattice and the negation $\sim$ satisfies the \emph{double negation law}
\[ {\sim} {\sim} x = x,\] 
and the two \emph{De Morgan laws} 
\[ {\sim} (x \vee y) = {\sim} x \wedge {\sim} y  
\quad \text{and} \quad 
{\sim} (x \wedge y) = {\sim} x \vee {\sim} y .  \]
A \emph{Kleene algebra} is a De~Morgan algebra satisfying  the inequality
\begin{equation} \label{Eq:Kleene}\tag{K}
x \wedge {\sim} x \leq y \vee {\sim} y.
\end{equation}
Our next proposition is clear by \cite[Lemma~2.1]{JarRad23}.

\begin{proposition} \label{Prop:Kleene}
If $R$ is a reflexive relation such that $\mathrm{DM(RS)}$ is
a distributive lattice, then
\[ (\mathrm{DM(RS)},\vee,\wedge,{\sim},(\emptyset,\emptyset), (U,U) ) \]
is a Kleene algebra in which ${\sim}(A,B) = (B^c, A^c)$ for all
$(A,B) \in  \mathrm{DM(RS)}$.
\end{proposition}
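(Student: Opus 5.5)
The plan is to verify directly that ${\sim}$ is a well-defined operation on $\mathrm{DM(RS)}$ and then check the De Morgan and Kleene axioms by hand; distributivity and boundedness are given by hypothesis, with bounds $(\emptyset,\emptyset)$ and $(U,U)$ by (Ref1).

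\emph{Step 1: ${\sim}(A,B)=(B^c,A^c)$ lies in $\mathrm{DM(RS)}$.} Take $(A,B)\in\mathrm{DM(RS)}$.
\begin{itemize}
\item Write $B=Y^\UP$. By \eqref{eq:dual}, $B^c=Y^{\UP c}=Y^{c\DOWN}\in\wp(U)^\DOWN$.
\item Similarly, writing $A=X^\DOWN$ gives $A^c=X^{c\UP}\in\wp(U)^\UP$.
\item The singleton condition is immediate: $B^c\cap\mathcal S=\mathcal S\setminus B=\mathcal S\setminus A=A^c\cap\mathcal S$.
\item The main point is to show $B^{c\Up\UP}\subseteq A^c$, i.e.\ $A\subseteq B^{c\Up\UP c}$. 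By \eqref{eq:dual}, $B^{c\Up\UP c}=B^{c\Up c\DOWN}=B^{\Down\DOWN}$, so we need $A\subseteq B^{\Down\DOWN}$.
\item The given condition $A^{\Up\UP}\subseteq B$ yields this through the two Galois connections $({^\UP},{^\Down})$ and $({^\Up},{^\DOWN})$. First $A^{\Up\UP}\subseteq B$ gives $A^\Up\subseteq B^\Down$, and then $A\subseteq B^{\Down\DOWN}$.
\end{itemize}

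\emph{Step 2: ${\sim}$ is an involution and an order-reversing bijection.} We have ${\sim}{\sim}(A,B)={\sim}(A^c,B^c)$ recomputed, that is ${\sim}(B^c,A^c)=(A,B)$. Moreover $(A,B)\le(C,D)$ iff $A\subseteq C$ and $B\subseteq D$, iff $D^c\subseteq B^c$ and $C^c\subseteq A^c$, iff ${\sim}(C,D)\le{\sim}(A,B)$. An order-reversing involution on a lattice is a dual automorphism, so it exchanges arbitrary joins and meets. This gives both De Morgan laws without using the explicit formulas \eqref{Eq:Meet} and \eqref{Eq:Join}.

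\emph{Step 3: the Kleene inequality \eqref{Eq:Kleene}.}
\begin{itemize}
\item By \eqref{Eq:Meet}, $(A,B)\wedge(B^c,A^c)=(A\cap B^c,\ldots)$. Since $A\subseteq A^{\Up\UP}\subseteq B$ by reflexivity, we get $A\cap B^c=\emptyset$, so the first component is $\emptyset$. The second component is $(B\cap A^c)^{\Down\UP}\subseteq B\cap A^c$, using (GC2).
\item Dually, by \eqref{Eq:Join}, $(C,D)\vee(D^c,C^c)=((C\cup D^c)^{\Up\DOWN},\,D\cup C^c)$. Here $D\cup C^c=U$ because $C\subseteq D$.
\item So the meet is $(\emptyset,P)$ and the join is $(Q,U)$, and $(\emptyset,P)\le(Q,U)$ holds trivially.
\end{itemize}

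The main obstacle is Step 1, namely closure of $\mathrm{DM(RS)}$ under ${\sim}$, specifically the condition $B^{c\Up\UP}\subseteq A^c$. That is where the duality \eqref{eq:dual} and the Galois connections must be combined carefully. The remaining steps are formal. Alternatively, one could simply cite \cite[Lemma~2.1]{JarRad23}, which is presumably this general statement for such pair lattices.
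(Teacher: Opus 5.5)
Your proof is correct, but it takes a different route from the paper. The paper gives no argument of its own here: it declares the proposition clear from \cite[Lemma~2.1]{JarRad23}, which you also mention as an alternative. Your Steps~1--3 therefore serve as a self-contained replacement for that citation.

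What your direct verification adds is a clear account of where each hypothesis is used:
\begin{itemize}
\item Closure of $\mathrm{DM(RS)}$ under ${\sim}$ needs only the duality \eqref{eq:dual} and the two Galois connections. The key step, $A^{\Up\UP}\subseteq B \Rightarrow A\subseteq B^{\Down\DOWN}$, is the same one the paper uses in the proof of Proposition~\ref{prop:pseudomplement}.
\item Reflexivity enters only through $A\subseteq A^{\Up\UP}\subseteq B$, which you need for the Kleene inequality \eqref{Eq:Kleene}. Without seriality, $A\subseteq B$ can indeed fail.
\item Distributivity is used only because the definition of a De~Morgan algebra requires it.
\end{itemize}
So your argument in fact shows more: for every reflexive $R$, $\mathrm{DM(RS)}$ with ${\sim}$ is a bounded lattice with an order-reversing involution satisfying \eqref{Eq:Kleene}. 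This is presumably the content the paper imports from \cite{JarRad23}. The citation gives brevity but hides this breakdown.

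Deriving both De~Morgan laws from ${\sim}$ being an order-reversing bijective involution is also cleaner than computing with the join and meet formulas.

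One small slip: in Step~2 the intermediate expression ${\sim}(A^c,B^c)$ should read ${\sim}(B^c,A^c)$. Your final identity ${\sim}{\sim}(A,B)=(A,B)$ is correct.
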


A nonzero element $j$ of a complete lattice $L$ is called 
\emph{completely join-irreducible} if $j = \bigvee S$ implies $j \in S$ 
for every subset $S$ of $L$.
Note that the least element $0 \in L$ is not completely join-irreducible.
The completely join-irreducible elements of $\wp(U)^\UP$ are the
sets $\breve{R}(x)$, $x \in U$, such that for any $A \subseteq U$, 
$\breve{R}(x) = \bigcup \{ \breve{R}(a) \mid a \in A\}$ implies 
$\breve{R}(x) = \breve{R}(b)$ for some $b \in A$. The completely
join-irreducible elements of $\wp(U)^\Up$ are similar sets obtained
by replacing $\breve{R}$ by $R$.

We proved in~\cite{JarRad25} that if $R$ is a reflexive relation on $U$, 
then the set of completely join-irreducible
elements of $\mathrm{DM(RS)}$ is
\begin{gather} \label{eq:Join1}
\{ (\{x\}^{\Up \DOWN},\{x\}^{\Up \UP}) \mid 
\text{$\{x\}^\Up$ is completely join-irreducible in $\wp(U)^\Up$} \} \\
\label{eq:Join2}
\cup \  \{ (\{x\}^\DOWN,\{x\}^\UP) \mid 
\text{$\{x\}^\UP$ is completely join-irreducible in $\wp(U)^\UP$ 
and $x \notin \mathcal{S}$} \}. 
\end{gather}
Moreover, the set of the atoms of $\mathrm{DM(RS)}$ is 
\begin{equation} \label{eq:atoms}
\{ (\{x\}^\DOWN,\{x\}^\UP ) \mid 
\text{$\{x\}^\UP$ is an atom of $\wp(U)^\UP$} \}.
\end{equation}

\section{Pseudocomplements} \label{sec:pseudocomplements}

Let $L$ be a lattice with $0$. The \emph{pseudocomplement} of an element $a \in L$ is the greatest 
element $a^* \in L$ such that $a \wedge a^* = 0$. A lattice $L$ is called \emph{pseudocomplemented} 
if every element $a \in L$ has a pseudocomplement. For a pseudocomplemented lattice $L$,
the algebra $(L,\vee,\wedge,^*,0,1)$ is called a \emph{$p$-algebra}.

Let us start with the following proposition.

\begin{proposition} \label{prop:pseudomplement}
Let $R$ be a reflexive relation.
If $\wp(U)^\UP$ is pseudocomplemented, then $\mathrm{DM(RS)}$
is also pseudocomplemented. For $(A,B) \in \mathrm{DM(RS)}$, 
\[ (A,B)^* = (B^{* \Down \DOWN}, B^*),\]
where $B^*$ is the pseudocomplement of $B$ in $\wp(U)^\UP$.
\end{proposition}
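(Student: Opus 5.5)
The plan is to verify directly that the proposed pair is the greatest element whose meet with $(A,B)$ is the bottom $(\emptyset,\emptyset)$. It uses the meet formula \eqref{Eq:Meet}, Lemma~\ref{lem:exists_pair}, and the Galois connection facts (GC1)--(GC6) together with (Ref2).

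\emph{Step 1: membership.} Since $B^*\in\wp(U)^\UP$, Lemma~\ref{lem:exists_pair}(ii) gives $(B^{*\Down\DOWN},B^*)\in\mathrm{DM(RS)}$ immediately.

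\emph{Step 2: the meet is zero.} By \eqref{Eq:Meet}, the meet is $(A\cap B^{*\Down\DOWN},(B\cap B^*)^{\Down\UP})$.
\begin{itemize}
\item The second component is exactly the meet $B\wedge B^*$ in $\wp(U)^\UP$, which is $\emptyset$.
\item The first component $C:=A\cap B^{*\Down\DOWN}$ needs care. I do not expect $B\cap B^*=\emptyset$ to hold as sets, so the idea is to pass to $C^{\Up\UP}\in\wp(U)^\UP$ instead.
\item Since $C\subseteq A$, we get $C^{\Up\UP}\subseteq A^{\Up\UP}\subseteq B$, the last inclusion by the defining condition of $\mathrm{DM(RS)}$.
\item Since $C\subseteq B^{*\Down\DOWN}$, we get $C^{\Up\UP}\subseteq B^{*\Down\DOWN\Up\UP}\subseteq B^{*\Down\UP}=B^*$. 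Here we use $X^{\DOWN\Up}\subseteq X$ from (GC2) and then (GC6).
\item Hence $C^{\Up\UP}$ is an element of $\wp(U)^\UP$ lying below both $B$ and $B^*$, so it lies below $B\wedge B^*$ and equals $\emptyset$.
\item Reflexivity gives $C\subseteq C^\Up\subseteq C^{\Up\UP}$ by (Ref2), so $C=\emptyset$.
\end{itemize}

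\emph{Step 3: maximality.} Suppose $(C,D)\in\mathrm{DM(RS)}$ satisfies $(A,B)\wedge(C,D)=(\emptyset,\emptyset)$.
\begin{itemize}
\item From the second component, $(B\cap D)^{\Down\UP}=\emptyset$, which says $B\wedge D=\emptyset$ in $\wp(U)^\UP$. Hence $D\subseteq B^*$.
\item For the first component, use $C^{\Up\UP}\subseteq D$. Applying ${}^\Down$ and (GC2) gives $C^\Up\subseteq C^{\Up\UP\Down}\subseteq D^\Down$.
\item Applying ${}^\DOWN$ and (GC2) again gives $C\subseteq C^{\Up\DOWN}\subseteq D^{\Down\DOWN}\subseteq B^{*\Down\DOWN}$.
\item Therefore $(C,D)\le(B^{*\Down\DOWN},B^*)$.
\end{itemize}

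The main obstacle is Step~2. There one must avoid assuming $B\cap B^*=\emptyset$ set-theoretically, and instead transport the first component into $\wp(U)^\UP$ via ${}^{\Up\UP}$, using the $\mathrm{DM(RS)}$ condition $A^{\Up\UP}\subseteq B$.
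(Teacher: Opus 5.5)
Your proof is correct and takes essentially the paper's approach: membership via Lemma~\ref{lem:exists_pair}, the meet computed with \eqref{Eq:Meet}, and a maximality step identical to the paper's. The only difference is how the first component $A\cap B^{*\Down\DOWN}$ is shown to be empty. The paper uses $A\subseteq B^{\Down\DOWN}$ and (GC5) to get $A\cap B^{*\Down\DOWN}\subseteq (B\cap B^*)^{\Down\DOWN}\subseteq (B\cap B^*)^{\Down\UP}=\emptyset$, whereas you push $C$ into $\wp(U)^\UP$ via ${}^{\Up\UP}$ and use that $B\wedge B^*$ is the greatest lower bound there; both are valid and equally short.
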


\begin{proof} By Lemma~\ref{lem:exists_pair}, $(B^{* \Down \DOWN}, B^*)$
belongs to $\mathrm{DM(RS)}$. 
We first prove that 
\begin{equation} \label{eq:meet_for_pseudo}
 (A,B) \wedge (B^{* \Down \DOWN}, B^*) = 
  (A \wedge B^{* \Down \DOWN}, B \wedge B^*)
  = (\emptyset,\emptyset),
\end{equation}
where the meets are taken component-wise in 
$\wp(U)^\DOWN$ and $\wp(U)^\UP$, respectively.
Obviously, $\emptyset = B \wedge B^* = 
(B \cap B^*)^{\Down \UP}$ in $\wp(U)^\UP$.
Because $A^{\Up \UP} \subseteq B$, we obtain
$A^\Up \subseteq A^{\Up (\UP \Down)} \subseteq B^\Down$ and 
$A \subseteq A^{\Up\DOWN} \subseteq B^{\Down \DOWN}$.
Thus, in $\wp(U)^\DOWN$,
\[ A \wedge B^{* \Down \DOWN} =
A \cap B^{* \Down \DOWN} \subseteq
B^{\Down \DOWN} \cap B^{* \Down \DOWN} =
(B \cap B^*)^{\Down \DOWN}
\subseteq (B \cap B^*)^{\Down \UP}
= \emptyset.
\]
This proves \eqref{eq:meet_for_pseudo}.

Secondly,  let $(X,Y) \in \mathrm{DM(RS)}$ be such that 
$(A,B) \wedge (X,Y) = \emptyset$. The condition
$B \wedge Y = \emptyset$ in $\wp(U)^\UP$ implies
that $Y \subseteq B^*$. Since $(X,Y) \in \mathrm{DM(RS)}$, we have
$X^{\Up \UP} \subseteq Y \subseteq B^*$ and
$X^\Up \subseteq X^{\Up (\UP \Down)} \subseteq Y^\Down \subseteq 
B^{* \Down}$. It follows
$X \subseteq X^{\Up\DOWN} \subseteq B^{*\Down \DOWN}$.
We have proved $(X,Y) \leq (B^{* \Down \DOWN}, B^*)$,
completing the proof.
\end{proof}

A \emph{pseudocomplemented De Morgan algebra} is an algebra 
$(L,\vee,\wedge,{\sim},^*,0,1)$ such that $(L,\vee,\wedge,{\sim},0,1)$ is a De Morgan
algebra and $(L,\vee,\wedge,^*,0,1)$ is a $p$-algebra. In fact, such an
algebra forms a double $p$-algebra, where the pseudocomplement operations
determine each other by
\begin{equation}\label{eq:KleeneRule}
{\sim} x^*=({\sim} x)^+ \quad \text{and} \quad {\sim} x^+=({\sim} x)^*.
\end{equation}
A \emph{pseudocomplemented Kleene algebra} is defined analogously.

A complete lattice $L$ is \emph{completely distributive} if for any doubly indexed subset $\{a_{i,\,j}\}_{i \in I, \, j \in J}$ of $L$, we have:
\[
\bigwedge_{i \in I} \Big ( \bigvee_{j \in J} a_{i,\,j} \Big ) = 
\bigvee_{ f \colon I \to J} \Big ( \bigwedge_{i \in I} a_{i, \, f(i) } \Big ), \]
that is, any meet of joins may be converted into the join of all possible elements obtained by taking the meet over $i \in I$ of
elements $a_{i,\,k}$\/, where $k$ depends on $i$. 
Any completely distributive lattice $L$ is pseudocomplemented and
\begin{equation} \label{eq:pseudocomplemented}
a^* = \bigvee \{ x \in L \mid a \wedge x = 0\}.    
\end{equation}

We can now write a corollary of Propositions~\ref{Prop:Kleene} 
and~\ref{prop:pseudomplement},
and~\cite[Proposition~{4.1}]{JarRad25}

\begin{corollary} \label{cor:Kleene_pseudocompl}
Let $R$ be a reflexive relation on $U$ such that 
$\wp(U)^\UP$ is completely distributive. 
Then, $\mathrm{DM(RS)}$ forms a 
pseudocomplemented Kleene algebra such that 
\[{\sim}(A,B) = (B^c,A^c) \quad \text{and} \quad
(A,B)^* = (B^{* \Down \DOWN}, B^*)
\]
for all $(A,B) \in \mathrm{DM(RS)}$, where
$B^*$ is the pseudocomplement defined in $\wp(U)^\UP$. 
The underlying
lattice $\mathrm{DM(RS)}$ is completely distributive.
\end{corollary}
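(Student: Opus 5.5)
The corollary is assembled from three ingredients, and the plan is to check them in the order the proof needs them. First, we obtain complete distributivity of $\mathrm{DM(RS)}$. The hypothesis is that $\wp(U)^\UP$ is completely distributive, and \cite[Proposition~4.1]{JarRad25} is exactly the result transferring this property to the completion. So we invoke it to conclude that $\mathrm{DM(RS)}$ is a completely distributive (in particular distributive) complete lattice. If that proposition were stated only in terms of spatiality or of the completely join-irreducible elements \eqref{eq:Join1}--\eqref{eq:Join2}, we would instead verify that every element is a join of completely join-irreducibles lying below it. This is the step we expect to be the main obstacle, since it is the only one not purely formal; we take it as given by the cited result.

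Second, distributivity of $\mathrm{DM(RS)}$ lets us apply Proposition~\ref{Prop:Kleene} directly. It yields that $(\mathrm{DM(RS)},\vee,\wedge,{\sim},(\emptyset,\emptyset),(U,U))$ is a Kleene algebra with ${\sim}(A,B)=(B^c,A^c)$.

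Third, we obtain the pseudocomplement. Since $\wp(U)^\UP$ is completely distributive, it is pseudocomplemented by \eqref{eq:pseudocomplemented}, with $B^*=\bigcup\{Y\in\wp(U)^\UP \mid B\wedge Y=\emptyset\}$, because joins in $\wp(U)^\UP$ are unions. Proposition~\ref{prop:pseudomplement} then shows that $\mathrm{DM(RS)}$ is pseudocomplemented with $(A,B)^*=(B^{*\Down\DOWN},B^*)$. Combining this with the Kleene structure gives a pseudocomplemented Kleene algebra in the sense defined above, namely a Kleene algebra whose lattice reduct is a $p$-algebra. By \eqref{eq:KleeneRule}, the dual pseudocomplement is then determined automatically, so nothing further needs checking.
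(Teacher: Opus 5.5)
Your proposal is correct and follows the paper exactly: the corollary is obtained by combining \cite[Proposition~4.1]{JarRad25} (complete distributivity passes from $\wp(U)^\UP$ to $\mathrm{DM(RS)}$) with Propositions~\ref{Prop:Kleene} and~\ref{prop:pseudomplement}. Your contingency plan is not needed, and as stated it would not work: showing that every element is a join of completely join-irreducibles gives only spatiality, not complete distributivity (every finite non-distributive lattice is spatial), so you would need completely join-prime elements instead.
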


In what follows, we present a description of $B^*$ for 
$B \in \wp(U)^\UP$ using the notion of the core. We recall its 
definition from~\cite{JarRad25}. Let $R$ be a binary relation on 
$U$ and $x \in U$. The \emph{core of $R(x)$} is defined by 
\begin{equation} \label{eq:core} 
\mathfrak{core}R(x) := 
\{w \in R(x) \mid w \in R(y) \implies R(x) \subseteq R(y) \}. \end{equation} 
Similarly, we define the \emph{core of $\breve{R}(x)$} by replacing 
$R$ with $\breve{R}$ in \eqref{eq:core}. The properties of the 
core are listed in \cite{JarRad25}.
We define the following set for any
$B \in \wp(U)^\UP$:
\[ \mathcal{K}(B) := \bigcup 
\{ \mathfrak{core}\breve{R}(x) \mid \breve{R}(x) 
\subseteq B \} . \]
We can now compute: 
\begin{align*}
y \in \mathcal{K}(B)^\Up & \iff
\breve{R}(y) \cap \mathcal{K}(B) \neq \emptyset  \\
& \iff \breve{R}(y) \cap 
\textstyle{\bigcup} \{ \mathfrak{core}\breve{R}(x) \mid 
\breve{R}(x) \subseteq B \} \neq \emptyset  \\
& \iff \textstyle{\bigcup} \{  \breve{R}(y) \cap  
\mathfrak{core}\breve{R}(x) \mid \breve{R}(x) 
\subseteq B \} \neq \emptyset  \\
& \iff \text{there is } \breve{R}(x) 
\subseteq B \text{ such that }  \breve{R}(y) \cap  
\mathfrak{core}\breve{R}(x) \neq \emptyset.
\end{align*}
Trivially, $\breve{R}(y) \cap \mathfrak{core}\breve{R}(x) \neq \emptyset$ 
means that there exists an element 
$z \in \mathfrak{core}\breve{R}(x) $ such 
that $z \in \breve{R}(y)$. 
By the definition of the core, this implies
that $\breve{R}(x) \subseteq \breve{R}(y)$ and 
$\mathfrak{core}\breve{R}(x) \neq \emptyset$. 
On the other hand, $\mathfrak{core}\breve{R}(x) \neq \emptyset$
implies that there is $z \in \mathfrak{core}\breve{R}(x)$.
Because $\mathfrak{core}\breve{R}(x) \subseteq \breve{R}(x)
\subseteq \breve{R}(y)$, we have $z \in \breve{R}(y)$. Thus,
\[ 
y \in \mathcal{K}(B)^\Up  \iff
\text{there is } \breve{R}(x) 
\subseteq B \text{ such that }  \breve{R}(x) \subseteq \breve{R}(y) 
\text{ and } \mathfrak{core}\breve{R}(x) \neq \emptyset.
\]

The set of completely join-irreducible elements of $L$ is denoted by 
$\mathcal{J}(L)$, or simply by $\mathcal{J}$ if there is no danger of 
confusion.
A complete lattice $L$ is \emph{spatial} if for each $a \in L$,
\[ a = \bigvee \{ j \in \mathcal{J} \mid j \leq a \}. \]

An element $p$ of a complete lattice $L$ is said to be 
\emph{completely join-prime} if for every $X \subseteq L$, $p \leq \bigvee X$ implies $p \leq x$ for some $x \in X$. 
We denote by $\mathcal{J}_p(L)$ the set of all completely
join-prime elements of $L$.

In a complete lattice $L$, each completely join-prime element is completely join-irreducible. 
The converse does not always hold, but if $L$ is completely distributive, then the sets 
of completely join-prime and completely join-irreducible elements coincide~\cite{Balachandran55}.

Let $R$ be such that $\wp(U)^\UP$ is completely distributive 
and spatial. It is noted in \cite[Lemma~4.10]{JarRad25} 
that $\mathfrak{core}\breve{R}(x) \neq \emptyset$
is equivalent to $\{x\}^\UP$ being completely join-prime.
Because in a completely distributive lattice, the sets of
completely join-prime elements and completely join-irreducible
elements coincide, we have that $\mathfrak{core}\breve{R}(x) \neq \emptyset$ if and only if $\{x\}^\UP \in \mathcal{J}(\wp (U)^{\UP})$,
since $\breve{R}(x)$ equals $\{x\}^\UP$. We can now write:
\[ y \in \mathcal{K}(B)^\Up  \iff 
\text{there is } \{x\}^\UP \subseteq B \text{ such that }  
\{x\}^\UP \in \mathcal{J}(\wp (U)^{\UP}) \text{ and }
\{x\}^\UP \subseteq \{y\}^\UP.
\]
Negating this condition and replacing $x$ with $j$ for clarity yields:
\[ y \in \mathcal{K}(B)^{\Up c} \iff \{j\}^\UP \nsubseteq \{y\}^\UP 
\text{ for all } \{j\}^\UP \in \mathcal{J}(\wp (U)^{\UP})
\text{ such that }  \{j\}^\UP \subseteq B.
\]

On the other hand, by \eqref{eq:pseudocomplemented},
\[
B^* = \bigcup \{ Y^\UP \mid Y^\UP \wedge B = \emptyset\}. 
\]
We can also express the pseudocomplement in the form
\begin{equation} \label{eq:pseudo_in_up}
B^* = \bigcup \{ \{y\}^\UP \mid \{y\}^\UP \wedge B =\emptyset \}. 
\end{equation}
This is because
$\{\{y\}^\UP \mid \{y\}^\UP \wedge B =\emptyset\} \subseteq
 \{ Y^\UP \mid Y^\UP \wedge B =\emptyset\}$. Hence,
\[ \bigcup \{\{y\}^\UP \mid \{y\}^\UP \wedge B =\emptyset\} 
\subseteq \bigcup \{ Y^\UP \mid Y^\UP \wedge B = \emptyset\}
\subseteq B^*.\]
Since $\{y\}^\UP \subseteq B^*$ implies $\{y\}^\UP \wedge B = \emptyset$,
we have 
\[
B^* = \bigcup \{ \{y\}^\UP \in \mathcal{J}(\wp(U)^\UP \mid
\{y\}^\UP \subseteq B^* \} \subseteq  
\bigcup \{\{y\}^\UP \mid \{\{y\}^\UP \wedge B =\emptyset\}.
\]
Therefore, \eqref{eq:pseudo_in_up} holds.

We can now state the following proposition.

\begin{proposition} \label{prop:upper_pseudo}
Let $R$ be a reflexive relation on $U$ such that 
$\wp(U)^\UP$ is completely distributive 
and spatial. Then, 
\begin{equation} \label{eq:pseudocomplement_formula}    
B^* = \mathcal{K}(B)^{\Up c \UP}. 
\end{equation}
\end{proposition}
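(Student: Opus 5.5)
The plan is to compare the description \eqref{eq:pseudo_in_up} of $B^*$ with the description of $\mathcal{K}(B)^{\Up c}$ obtained just before the statement, and to show that the two index sets coincide. First I will use (GC4): for any $Z \subseteq U$ we have $Z^\UP = \bigcup \{ \{y\}^\UP \mid y \in Z\}$. Applying this with $Z = \mathcal{K}(B)^{\Up c}$ gives
\[
\mathcal{K}(B)^{\Up c \UP} = \bigcup \{ \{y\}^\UP \mid y \in \mathcal{K}(B)^{\Up c} \}.
\]
By \eqref{eq:pseudo_in_up}, $B^* = \bigcup \{ \{y\}^\UP \mid \{y\}^\UP \wedge B = \emptyset\}$. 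So it suffices to prove, for every $y \in U$,
\[
\{y\}^\UP \wedge B = \emptyset \iff y \in \mathcal{K}(B)^{\Up c}.
\]

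By the computation preceding the proposition, $y \in \mathcal{K}(B)^{\Up c}$ holds exactly when no $\{j\}^\UP \in \mathcal{J}(\wp(U)^\UP)$ satisfies both $\{j\}^\UP \subseteq B$ and $\{j\}^\UP \subseteq \{y\}^\UP$. This is where spatiality enters. Recall that the completely join-irreducible elements of $\wp(U)^\UP$ are sets of the form $\breve{R}(x) = \{x\}^\UP$. Since $\wp(U)^\UP$ is spatial, the meet $M := \{y\}^\UP \wedge B$ (taken in $\wp(U)^\UP$) equals the join of the completely join-irreducible elements below it. Hence $M = \emptyset$ if and only if no $\{j\}^\UP \in \mathcal{J}(\wp(U)^\UP)$ satisfies $\{j\}^\UP \leq M$. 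Here we use that $\emptyset$ is the least element and is not completely join-irreducible.

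Finally, by the definition of meet, $\{j\}^\UP \leq M$ holds iff $\{j\}^\UP \subseteq \{y\}^\UP$ and $\{j\}^\UP \subseteq B$. This matches the characterisation of $\mathcal{K}(B)^{\Up c}$ exactly, so the displayed equivalence holds and the two unions coincide.

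There is no real obstacle in this argument. The only point needing care is that the meet in $\wp(U)^\UP$ is $(\,\cdot \cap \cdot\,)^{\Down\UP}$ rather than the intersection. This causes no trouble because we argue purely order-theoretically via spatiality. Complete distributivity is used only through the earlier identification of nonempty cores with completely join-irreducible elements, which is already built into the description of $\mathcal{K}(B)^{\Up c}$.
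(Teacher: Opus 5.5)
Your proposal is correct and follows essentially the same route as the paper. You use \eqref{eq:pseudo_in_up} and (GC4) to reduce the claim to the pointwise equivalence $\{y\}^\UP \wedge B = \emptyset \iff y \in \mathcal{K}(B)^{\Up c}$, and you establish that equivalence by spatiality and the preceding characterisation of $\mathcal{K}(B)^{\Up}$, exactly as the paper does. One small correction to your closing remark: complete distributivity is also what guarantees that $B^*$ exists and that \eqref{eq:pseudo_in_up} holds, so it is not used only through the identification of nonempty cores with completely join-irreducible elements.
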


\begin{proof} By \eqref{eq:pseudo_in_up},
\[ B^* = \bigcup \{\{y\}^\UP \mid \{\{y\}^\UP \wedge B = \emptyset\}.\]
For any $\{y\}^\UP$, $B \wedge \{y\}^\UP \in \wp (U)^\UP$.
Because $\wp(U)^\UP$ is spatial, $B \wedge\{y\}^\UP =\emptyset$ is
equivalent to that there is no  
$\{j\}^\UP \in \mathcal{J}(\wp(U)^\UP)$ with  $\{j\}^\UP \subseteq B$ and 
$\{j\}^\UP \subseteq \{y\}^\UP$. This is because $\{j\}^\UP \subseteq B$
and $\{j\}^\UP \subseteq \{y\}^\UP$ would imply 
$\emptyset \neq \{j\}^\UP \subseteq B \wedge \{y\}^\UP$. On the other
hand, if $B \wedge \{y\}^\UP \neq \emptyset$, there is 
$\{j\}^\UP \in \mathcal{J}(\wp(U)^\UP)$ such that
$\{j\}^\UP \subseteq B \wedge \{y\}^\UP$, and $B \wedge \{y\}^\UP$ is included both in $B$ and $\{y\}^\UP$. Therefore, 

\begin{align*}
B^* &= \bigcup \{ \{y\}^\UP \mid \{j\}^\UP \nsubseteq \{y\}^\UP 
\text{ for all } 
\{j\}^\UP \in \mathcal{J}(\wp(U)^\UP) \text{ such that }
\{j\}^\UP \subseteq B\} \\
&= \{ y \mid \{j\}^\UP \nsubseteq \{y\}^\UP \text{ for all } 
\{j\}^\UP \in \mathcal{J}(\wp(U)^\UP) \text{ such that } 
\{j\}^\UP \subseteq B \}^\UP \\
&= \mathcal{K}(B)^{\Up c \UP}. \qedhere
\end{align*}
\end{proof}

Next, we present an example showing that the converse of Proposition~\ref{prop:upper_pseudo} does not hold. 
We define a reflexive relation such that $\wp(U)^\UP$ is a pseudocomplemented lattice and satisfies  \eqref{eq:pseudocomplement_formula}, 
but $\wp(U)^\UP$ is not distributive.

\begin{example}
Let $U = \{1, 2, 3, 4\}$ and let $R$ be such that 
\[ 
R(1) = \{1, 4\}, \quad R(2) = R(3) = \{1, 2, 3\} \quad R(4) = \{4\}.
\]
Then,
\[
\breve{R}(1) = \{1, 2, 3\}, \quad \breve{R}(2) = \breve{R}(3) = \{2, 3\}, \quad \breve{R}(4) = \{1, 4\}.
\]
The elements of $\wp(U)^\UP$ are $\emptyset, \{2, 3\}, \{1, 2, 3\}, \{1, 4\}, U$. Obviously, $\wp(U)^\UP$ is isomorphic to the lattice $\mathbf{N_5}$ 
which is pseudocomplemented, but not distributive. 
We have 
\[ \{1, 2, 3\}^* = \{2, 3\}^* = \{1, 4\}, \quad  \{1, 4\}^* = \{1, 2, 3\}, \quad \emptyset^* = U, \quad U^* = \emptyset. \] 
The $\breve{R}$-cores are:
\[
\mathfrak{core}\breve{R}(1) = \emptyset, \quad \mathfrak{core}\breve{R}(2) = \mathfrak{core}\breve{R}(3) = \{2, 3\}, \quad 
\mathfrak{core}\breve{R}(4) = \{4\}.
\]
Since $\mathcal{K}(B) = \bigcup 
\{ \mathfrak{core}\breve{R}(x) \mid \breve{R}(x) \subseteq B\}$ 
for each $B \in \wp(U)^\UP$, we have
\[
\mathcal{K}(\emptyset) = \emptyset, \quad 
\mathcal{K}(\{2, 3\}) = \mathcal{K}(\{1, 2, 3\}) = \{2, 3\}, \quad 
\mathcal{K}(\{1, 4\}) = \{4\}, \quad
\mathcal{K}(U) = \{2, 3, 4\}.
\]
We obtain:
\begin{align*}
\mathcal{K}(\emptyset)^\Up &= \emptyset^\Up = \emptyset, \\
\mathcal{K}(\{2, 3\})^\Up &= \mathcal{K}(\{1, 2, 3\})^\Up = 
\{2, 3\}^\Up = \{1, 2, 3\}, \\
\mathcal{K}(\{1, 4\})^\Up &= \{4\}^\Up = \{4\}, \\ 
\mathcal{K}(U)^\Up &= \{2, 3, 4\}^\Up = U.
\end{align*}
Finally,
\begin{align*}
\mathcal{K}(\emptyset)^{\Up c \UP} &= \emptyset^{c \UP} = U^\UP = U, \\ 
\mathcal{K}(\{2, 3\})^{\Up c \UP} &= \mathcal{K}(\{1, 2, 3\})^{\Up c \UP} = \{1, 2, 3\}^{c \UP} = \{4\}^\UP = \{1,4\}, \\ 
\mathcal{K}(\{1, 4\})^{\Up c \UP} &= \{4\}^{c \UP} = \{1,2,3\}^\UP = \{1,2,3\}, \\
\mathcal{K}(U)^{\Up c \UP} &=  U^{c \UP} = \emptyset^\UP = \emptyset.
\end{align*}
We have now shown that $B^* = \mathcal{K}(B)^{\Up c \UP}$ holds in the pseudocomplemented lattice $\wp(U)^\UP$.
Because  $\wp(U)^\UP$ is not distributive, the converse of Proposition~\ref{prop:upper_pseudo} does not hold. 
\end{example}

\begin{remark}
Note that we can write $\mathcal{K}(B)^{\Up c \UP}$ in the
form $\mathcal{K}(B)^{c\Down \UP}$. Since $\wp(U)^\UP$ is
the interior system corresponding to the interior operator
$X \mapsto X^{\Down \UP}$, we can write the pseudocomplement in
the form 
\begin{equation} \label{eq:pseudocomplement_upper}
B^* = \bigcup \{ X \in \wp(U)^\UP \mid X \subseteq \mathcal{K}(B)^c\}.
\end{equation}
\end{remark}

\begin{example} \label{exa:pseudocomplements}
Let $R$ be a reflexive relation on $U = \{1,2,3,4\}$ such that
\begin{gather*}
R(1) = \{1,2\}, \ R(2) = \{1,2,3\}, \ R(3) = \{3\}, \ 
R(4) = \{1,3,4\}.
\end{gather*}
\begin{figure}[htbp] 
\centering 
\includegraphics[width=0.8\textwidth]{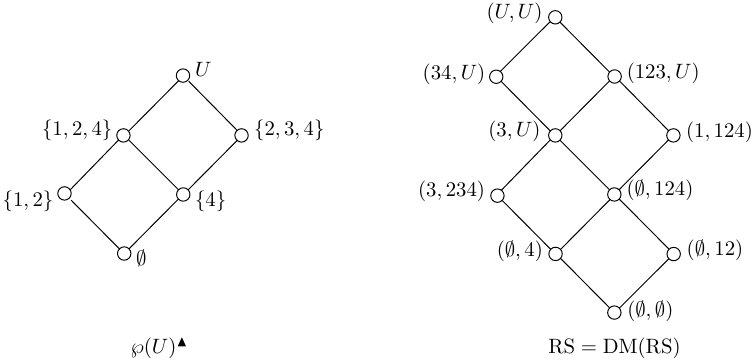} 
\caption{The Hasse diagrams of $\wp(U)^\UP$ and 
$\mathrm{RS} = \mathrm{DM(RS)}$.} 
\label{fig:pseudocomplements} 
\end{figure}

The inverse relation $\breve{R}$ has the neighbourhoods:
\begin{gather*}
\breve{R}(1) = \{1, 2, 4\}, \
\breve{R}(2) = \{1, 2\}, \ 
\breve{R}(3) = \{2, 3, 4\}, \
\breve{R}(4) = \{4\}.
\end{gather*}
The $\breve{R}$-cores are:
\[
\mathfrak{core}\breve{R}(1) = \emptyset, \
\mathfrak{core}\breve{R}(2) = \{1\}, \
\mathfrak{core}\breve{R}(3) = \{3\}, \
\mathfrak{core}\breve{R}(4) = \{4\}.
\]

The Hasse diagram of $\wp(U)^\UP$ is given in
Figure~\ref{fig:pseudocomplements}~(left). Obviously,
$\wp(U)^\UP$ is distributive. Because it is finite, it
is spatial and completely distributive. 
Now, we can compute the pseudocomplement of $B = \{2,3,4\}$. 
Since $\breve{R}(3)$ and $\breve{R}(4)$ are included in 
$B$, it follows that
\[ \mathcal{K}(B) = \mathfrak{core}\breve{R}(3) \cup
\mathfrak{core}\breve{R}(4) =  \{3\} \cup \{4\} = \{3,4\}.\]
Therefore, $\mathcal{K}(B)^c = \{1,2\}$. This set
is itself the greatest element of $\wp(U)^\UP$ included in $\{1,2\}$,
so $B^* = \{1,2\}$ by \eqref{eq:pseudocomplement_upper}.

Note that $\mathcal{K}(B)^c$ is truly needed instead of $B^c$.
This is because $B^c = \{1\}$ and the greatest element of
$\wp(U)^\UP$ included in it is $\emptyset$, which is not the pseudocomplement of $B$.

The ordered set $\mathrm{RS}$ is
itself a lattice, so $\mathrm{DM(RS)} = \mathrm{RS}$. Its
Hasse diagram can be found in 
Figure~\ref{fig:pseudocomplements}~(right).
Let $(A,B) = (\{3\}, \{2,3,4\})$. By 
Proposition~\ref{prop:pseudomplement}, 
$(A,B)^* = (B^{* \Down \DOWN}, B^*)$. We just computed that
$B^* = \{1,2\}$. Since 
\[ B^{*\Down \DOWN} = \{1,2\}^{\Down \DOWN} = 
\{2\}^\DOWN = \emptyset,\]
we have that $ (\{3\}, \{2,3,4\})^* = (\emptyset, \{1,2\})$.
\end{example}

In the following example, we consider pseudocomplements of rough sets and approximations 
defined by quasiorders (reflexive and transitive binary relations).

\begin{example} \label{ex:quasiorder_pseudo}
Let $R$ be a quasiorder on $U$. Then 
$x \in \mathfrak{core}R(x)$ for any $x \in U$. 
This follows from the fact that if $x \in R(y)$, then  
$R(x) \subseteq R(y)$ by transitivity. Analogously, 
$x \in \mathfrak{core}\breve{R}(x)$.

We previously considered rough sets defined by quasiorders 
in \cite{JarRadVer09}.
If $R$ is a quasiorder on $U$, then the operators
$^\UP$ and $^\Up$ are closure operators. This implies that
$X^{\UP \UP} = X^\UP$ for all $X \subseteq U$.
In addition, the following identities hold:
\[ X^{\UP\Down} = X^\UP, \quad
X^{\Up\DOWN} = X^\Up, \quad
X^{\DOWN\Up} = X^\DOWN, \quad
X^{\Down\UP} = X^\Down.
\]

Let $B \in \wp(U)^\UP$. Then $B = X^\UP$ for some
$X \subseteq U$ and $B^\UP = B$. 
This also yields that 
$x \in B$ if and only if $\breve{R}(x) \subseteq B$.
For all $x \in B$, we have 
$x \in \mathfrak{core}\breve{R}(x) \subseteq
\breve{R}(x) \subseteq B$. We obtain
$B \subseteq 
\bigcup \{ \mathfrak{core}\breve{R}(x) \mid x \in B\} \subseteq B$ and 
\[
B = \bigcup \{\mathfrak{core}\breve{R}(x) \mid x \in B\} 
= \bigcup \{\mathfrak{core}\breve{R}(x) \mid 
\breve{R}(x) \subseteq B\} 
= \mathcal{K}(B).
\]
Using Proposition~\ref{prop:upper_pseudo}, we have
\[B^* =  \mathcal{K}(B)^{\Up c \UP} 
= B^{\Up c \UP} =  B^{\Up \DOWN c} = B^{\Up c}.
\]
Thus, for $(A,B) \in \mathrm{RS}$,
\[
(A,B)^* = (B^{* \Down \DOWN}, B^*) =
(B^{\Up c \Down \DOWN}, B^{\Up c}) =
(B^{\Up \Up \UP c}, B^{\Up c}) =
(B^{ \Up \UP c}, B^{\Up c});
\]
see also~\cite[p.~409]{JarvRadeRivi24}.
\end{example}

\begin{remark}
The lattice $(\wp(U)^\UP, \subseteq)$ is isomorphic to 
$(\wp(U)^\DOWN, \supseteq)$ by the map $\varphi \colon B \mapsto B^c$. Similarly, the map $\phi \colon A \mapsto A^c$ is an isomorphism between 
$(\wp(U)^\DOWN, \supseteq)$ and $(\wp(U)^\UP, \subseteq)$. 
This implies that $\wp(U)^\UP$ is pseudocomplemented if and only if 
$\wp(U)^\DOWN$ is dually pseudocomplemented with respect to set-inclusion order.

Suppose that $\wp(U)^\UP$ is pseudocomplemented.
For $B \in \wp(U)^\UP$, $\varphi(B^*) = 
\varphi(B)^+$ and for $A \in \wp(U)^\DOWN$, 
$\psi(A^+) = \psi(A)^*$. 
Let $A \in \wp(U)^\DOWN$. Then $A = \varphi(B)$ and $B = \psi(A)$
for some $B \in \wp(U)^\UP$. We have
\[  A^+ = \varphi(B)^+ = \varphi(B^*) = B^{* c} =
     \psi(A)^{* c} = A^{c * c}.\]
Similarly, $B^* = B^{c+c}$ for all $B \in \wp(U)^\UP$.

In $\mathrm{DM(RS)}$, $(A,B)^+ = {\sim}({\sim}(A,B)^*)$. Now,
$({\sim}(A,B))^* = (B^c,A^c)^* = (A^{c * \Down \DOWN}, A^{c *})$.
Thus, 
\[ (A,B)^+ =
{\sim}(A^{c * \Down \DOWN}, A^{c *}) =
(A^{c * c}, A^{c * \Down \DOWN c} )
= (A^{c * c}, A^{c * c \Up \UP} ) =
(A^+, A^{+\Up \UP}),
\]
where $A^+$ is the pseudocomplement in $\wp(U)^\DOWN$.
\end{remark}

\section{Regularity} \label{sec:regular}

An algebra $(L,\vee,\wedge,^*,^{+},0,1)$ is called a \emph{double $p$-algebra} 
if $(L,\vee,\wedge,^*,0,1)$ is a $p$-algebra and $(L,\vee,\wedge,^{+},0,1)$ is a 
\emph{dual $p$-algebra} (that is, $z\geq
x^{+} \iff x\vee z=1$ for all $x,z \in L$). In this case, 
$L$ is called a \emph{double pseudocomplemented} lattice. 

We say that a double $p$-algebra is \emph{regular} if it satisfies the 
condition
\begin{equation} \label{eq:condM} \tag{M}
x^* = y^* \text{ and } x^+ = y^+ \text{ imply } x = y. 
\end{equation}
Here ``regularity'' refers to ``congruence-regularity''. 
An algebra is \emph{congruence-regular} if every congruence is determined 
by any class of it: two congruences are necessarily equal when they
have a  class in common. J.~Varlet has proved in \cite{Var72} that
double  pseudocomplemented lattices satisfying \eqref{eq:condM} are
exactly the congruence-regular ones.

Sankappanavar has proved in \cite{Sankappanavar86} that any 
pseudocomplemented De~Morgan algebra satisfying (M) is congruence-regular. 
Therefore, we may call pseudocomplemented
De~Morgan and Kleene algebras regular when they satisfy (M).  

In this section, we give conditions under which 
$\mathrm{DM(RS)}$ forms a regular pseudocomplemented Kleene algebra.
We say that the set $\mathcal{J}$ of completely join-irreducible 
elements of a complete lattice $L$ \emph{has at most two levels} 
if for any  $j,k \in \mathcal{J}$, $j<k$ implies that $j$
is an atom. It is easy to check that in the case of a spatial 
lattice $L$, $\mathcal{J}$ has at most two levels if and only if 
$\mathcal{J}$ contains no chains of three or more elements. 

In  \cite[Proposition 4.4]{JarRad18}, we presented the following
equivalence. 

\begin{proposition} \label{prop:at_most_two}
Let $(L,\vee,\wedge,{\sim},{^*},0,1)$ be a
pseudocomplemented De~Morgan algebra defined on an algebraic lattice. The
following are equivalent:
\begin{enumerate}[label={\rm (\roman*)}]
\item  $(L,\vee,\wedge,{\sim},{^*},0,1)$ is regular;
\item $\mathcal{J}$ has at most two levels.
\end{enumerate}    
\end{proposition}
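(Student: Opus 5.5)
The plan is to reduce everything to the poset $(\mathcal{J},\leq)$ of completely join-irreducible elements and to check condition \eqref{eq:condM} directly on down-sets.

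\textbf{Step 1 (structure).} Since $L$ is a De~Morgan algebra, ${\sim}$ is a dual lattice automorphism of $L$. Hence $L$ is self-dual, and so it is both algebraic and dually algebraic. I would invoke the standard fact that a distributive lattice which is algebraic and dually algebraic is completely distributive and algebraic. Such a lattice is isomorphic to the lattice $\mathcal{O}(\mathcal{J})$ of all down-sets of $(\mathcal{J},\leq)$, via $x\mapsto\{j\in\mathcal{J}\mid j\leq x\}$; in particular $L$ is spatial. I expect this step to be the main obstacle, or at least the place where a citation is needed; the rest is combinatorics. If a clean reference is unavailable, I would argue as follows. Self-duality gives the join-infinite and meet-infinite distributive laws. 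In an algebraic lattice that is also a frame, every compact element is a finite join of completely join-prime elements. Together these give the down-set representation.

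\textbf{Step 2 (operations).} In $\mathcal{O}(\mathcal{J})$ the pseudocomplement of a down-set $X$ is
\[ X^* = \{ j \in \mathcal{J} \mid {\downarrow} j \cap X = \emptyset \} = \mathcal{J} \setminus {\uparrow} X . \]
The dual pseudocomplement $X^+$ is the least down-set $Z$ with $X\cup Z=\mathcal{J}$, so
\[ X^+ = {\downarrow}(\mathcal{J}\setminus X). \]
The double $p$-algebra operations therefore depend only on the lattice; ${\sim}$ was needed only in Step~1. Consequently $X^*=Y^*$ iff ${\uparrow}X={\uparrow}Y$, and $X^+=Y^+$ iff ${\downarrow}(\mathcal{J}\setminus X)={\downarrow}(\mathcal{J}\setminus Y)$. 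Recall that the atoms of $L$ correspond to the minimal elements of $\mathcal{J}$.

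\textbf{Step 3 ((ii)$\Rightarrow$(i)).} Suppose ${\uparrow}X={\uparrow}Y$ and ${\downarrow}(\mathcal{J}\setminus X)={\downarrow}(\mathcal{J}\setminus Y)$, but there is some $j\in X\setminus Y$.
\begin{itemize}
\item From $j\in{\uparrow}Y$ we get $y\in Y$ with $y\leq j$. Since $j\notin Y$, we have $y<j$, so $j$ is not minimal.
\item From $j\in\mathcal{J}\setminus Y\subseteq{\downarrow}(\mathcal{J}\setminus X)$ we get $k\notin X$ with $j\leq k$. Since $j\in X$, we have $j<k$.
\end{itemize}
Thus $y<j<k$, which contradicts the assumption that $\mathcal{J}$ has at most two levels. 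By symmetry $X=Y$, so \eqref{eq:condM} holds.

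\textbf{Step 4 ((i)$\Rightarrow$(ii)).} Suppose $\mathcal{J}$ contains a chain $a<b<c$. Put $X={\downarrow}b$ and $Y={\downarrow}b\setminus\{b\}$; both are down-sets and $X\neq Y$.
\begin{itemize}
\item We have ${\uparrow}b\subseteq{\uparrow}a$ and $a\in Y$, so ${\uparrow}X={\uparrow}Y$.
\item We have $\mathcal{J}\setminus Y=(\mathcal{J}\setminus X)\cup\{b\}$. Since $c\notin X$ and $b<c$, the element $b$ already lies in ${\downarrow}(\mathcal{J}\setminus X)$. Hence ${\downarrow}(\mathcal{J}\setminus X)={\downarrow}(\mathcal{J}\setminus Y)$.
\end{itemize}
So $X^*=Y^*$ and $X^+=Y^+$ with $X\neq Y$, and regularity fails.
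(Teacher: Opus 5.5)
Your proof is correct. The paper itself gives no proof of this proposition. It quotes it from \cite[Proposition~4.4]{JarRad18}, and the only ingredient it supplies is the remark after the statement that an algebraic De~Morgan lattice is spatial and completely distributive, by \cite[Theorem~10.29]{Davey02}. That remark is exactly your Step~1.

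Your Steps~2--4 replace the citation with a short self-contained argument, and both directions are sound:
\begin{itemize}
\item In $\mathcal{O}(\mathcal{J})$ you compute $X^*=\mathcal{J}\setminus{\uparrow}X$ and $X^+={\downarrow}(\mathcal{J}\setminus X)$ explicitly, so \eqref{eq:condM} becomes a statement about up- and down-closures.
\item A violation of \eqref{eq:condM} then forces a three-element chain $y<j<k$ in $\mathcal{J}$.
\item Conversely, a chain $a<b<c$ yields the explicit witnesses ${\downarrow}b$ and ${\downarrow}b\setminus\{b\}$, since ${\uparrow}b\subseteq{\uparrow}a$ and ${\downarrow}b\subseteq{\downarrow}c$ absorb the difference.
\end{itemize}
What your route buys is transparency. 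It shows that ${\sim}$ is used only to obtain the down-set representation, together with the standard fact that $x^+={\sim}({\sim}x)^*$ is the lattice-theoretic dual pseudocomplement. Hence the equivalence holds for every lattice $\mathcal{O}(P)$ regarded as a double $p$-algebra.

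One caution about the fallback you sketch for Step~1. The claim that every compact element of an algebraic frame is a finite join of completely join-prime elements is false as stated. The ideal lattice of an atomless Boolean algebra is an algebraic frame with no completely join-prime elements at all.

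The ingredient that actually does the work is the meet-infinite distributive law, which you do get from self-duality. The argument runs as follows:
\begin{itemize}
\item Given $a\nleq b$, choose a compact $k\leq a$ with $k\nleq b$.
\item By Zorn's lemma, take $m\geq b$ maximal with $k\nleq m$; compactness of $k$ handles chains.
\item Such an $m$ is completely meet-irreducible, and the meet-infinite distributive law makes it completely meet-prime.
\item Then $p=\bigwedge\{x\mid x\nleq m\}$ is completely join-prime, with $p\leq a$ and $p\nleq b$.
\end{itemize}
This gives spatiality, and hence $L\cong\mathcal{O}(\mathcal{J})$. Either use this argument or simply cite \cite[Theorem~10.29]{Davey02}, as the paper does.
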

\noindent%
Recall from \cite[Theorem~{10.29}]{Davey02} that for a De~Morgan algebra $(L,\vee,\wedge,{\sim},0,1)$,
being algebraic is equivalent to $L$ being a spatial and completely distributive lattice.

Let $(L,\vee,\wedge,{\sim},0,1)$ be a completely 
distributive De Morgan algebra. We define, for any 
$j \in \mathcal{J}$, the element
\begin{equation}\label{Eq:Gee}
 g(j) := \bigwedge \{x \in L \mid x\nleq {\sim} j \}.
\end{equation}
This $g(j) \in \mathcal{J}$ is the least element that is 
\emph{not} below ${\sim}j$. 

Let us recall from \cite{JarRad11, JarRad18}
the essential properties of the function 
$g \colon \mathcal{J} \to \mathcal{J}$.
It satisfies the following conditions:
\begin{enumerate}[label = ({J}\arabic*)]
 \item if $x \leq y$, then $g(x) \geq g(y)$;
 \item $g(g(x))= x$.
\end{enumerate}
If $(L,\vee,\wedge,{\sim},0,1)$ is a completely distributive 
Kleene algebra, then $j$ and $g(j)$ are comparable for any 
$j \in \mathcal{J}$, that is,
\begin{enumerate}[label = ({J}\arabic*)]
\addtocounter{enumi}{2}
 \item $g(j)\leq j \text{ or } j \leq g(j)$.
\end{enumerate}
We may define three disjoint sets:
\begin{align*}
    \mathcal{J}^- &= \{j \in \mathcal{J} \mid j < g(j) \}; \\
    \mathcal{J}^\circ &= \{j \in \mathcal{J} \mid j = g(j) \}; \\
    \mathcal{J}^+ &= \{j \in \mathcal{J} \mid j > g(j) \}.
\end{align*}
Note that 
$\mathcal{J}^{-} = \{ j \in \mathcal{J} \mid  j \leq {\sim} j\}$. Furthermore, the involution $g$ provides a bijection between 
$\mathcal{J}^-$ and $\mathcal{J}^+$; specifically,
$j\in \mathcal{J}^{-} \iff  g(j)\in \mathcal{J}^{+}$.
In \cite[Lemma 4.5]{JarRad18} we showed
that each element of $\mathcal{J}^{\circ}$ is
incomparable with all other elements of $\mathcal{J}$.

Let $R$ be a reflexive relation. As stated in 
Corollary~\ref{cor:Kleene_pseudocompl}, if 
$\mathrm{DM(RS)}$ is completely distributive, then
\[ 
(\mathrm{DM(RS)},\vee,\wedge,{\sim},{^*}, 
(\emptyset,\emptyset), (U,U) ) 
\]
is a pseudocomplemented Kleene algebra in which ${\sim}(A,B) = (B^c, A^c)$ 
and $(A,B)^* = (B^{* \Down \DOWN}, B^*)$, where $B^*$ is the 
pseudocomplement of $B$ in $\wp(U)^\UP$. 

The following result appeared in \cite[Proposition 5.3]{JarRad25}.

\begin{proposition} \label{prop:partition_of_J}
Let $R$ be a reflexive relation on $U$ such that 
$\mathrm{DM(RS)}$ is completely distributive. The
following assertions hold:
\begin{enumerate}[label={\rm (\roman*)}, itemsep=4pt]
\item %i
$\mathcal{J}^{-} = 
\{(\emptyset ,\{x\}^\UP) \mid \{x\}^\UP \in
\mathcal{J}(\wp (U)^{\UP}) \text{ and } x \notin \mathcal{S} \}$;

\item %ii
If $(\emptyset ,\{x\}^\UP) \in \mathcal{J}^{-}$, then
$g(\emptyset,\{x\}^\UP) =  (\{z\}^{\Up \DOWN}, \{z\}^{\Up \UP})$
for any $z\in \mathfrak{core}\breve{R}(x)$, $z \notin \mathcal{S}$,
and $\{z\}^\Up$ is completely join-irreducible in  
$\wp(U)^\Up$;

\item %iii
$\mathcal{J}^{+} = \{(\{x\}^{\Up \DOWN},\{x\}^{\Up \UP}) \mid 
\{x\}^\Up \in \mathcal{J}(\wp(U)^\Up) \text{ and }
x \notin \mathcal{S} \}$;

\item %iv
$\mathcal{J}^{\circ}$ = $\{(\{x\},\{x\}^\UP)\mid x\in \mathcal{S} \}$.
\end{enumerate}
\end{proposition}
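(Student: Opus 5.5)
The plan is to work directly from the explicit description of $\mathcal{J}=\mathcal{J}(\mathrm{DM(RS)})$ in \eqref{eq:Join1}--\eqref{eq:Join2} and the formula ${\sim}(A,B)=(B^c,A^c)$. I will use the remark that $\mathcal{J}^-=\{j\in\mathcal{J}\mid j\leq{\sim}j\}$, the involution properties (J1)--(J3), and the bijection $g\colon\mathcal{J}^-\to\mathcal{J}^+$. The order is: prove (i) by direct computation, then compute $g$ on $\mathcal{J}^-$ to get (ii), deduce (iii) from the bijection, and obtain (iv) as the remaining part $\mathcal{J}\setminus(\mathcal{J}^-\cup\mathcal{J}^+)$.

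For (i), note that for $(A,B)\in\mathrm{DM(RS)}$ we have $A\subseteq A^{\Up\UP}\subseteq B$ by (Ref2). Hence $(A,B)\leq(B^c,A^c)$ iff $A\cap B=\emptyset$ iff $A=\emptyset$. It then suffices to decide which elements of \eqref{eq:Join1}--\eqref{eq:Join2} have empty first component.
\begin{itemize}
\item For the elements in \eqref{eq:Join1}, $x\in\{x\}^{\Up\DOWN}$ by (GC2), so the first component is never empty.
\item For the elements in \eqref{eq:Join2}, $\{x\}^\DOWN=\{y\mid R(y)\subseteq\{x\}\}$, which by reflexivity equals $\{x\}$ if $x\in\mathcal{S}$ and $\emptyset$ otherwise.
\end{itemize}
This gives (i) immediately. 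It also shows that for $x\in\mathcal{S}$ we have $\{x\}^\Up=R(x)=\{x\}$, so both descriptions yield the same element $(\{x\},\{x\}^\UP)$.

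For (ii), let $j=(\emptyset,\{x\}^\UP)$, so ${\sim}j=(\{x\}^{\UP c},U)$. An element $(A,B)$ satisfies $(A,B)\nleq{\sim}j$ iff $A\cap\{x\}^\UP\neq\emptyset$, that is, iff $A\cap\breve R(x)\neq\emptyset$. Next I observe that for any $z$, the pair $k_z:=(\{z\}^{\Up\DOWN},\{z\}^{\Up\UP})$, which lies in $\mathrm{DM(RS)}$ by Lemma~\ref{lem:exists_pair}, is the least element of $\mathrm{DM(RS)}$ whose first component contains $z$. Indeed, $z\in A=A^{\Up\DOWN}$ forces $\{z\}^{\Up\DOWN}\subseteq A$, and then $\{z\}^{\Up\UP}\subseteq A^{\Up\UP}\subseteq B$. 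Consequently, for $z\in\mathfrak{core}\breve R(x)\subseteq\breve R(x)$, the element $k_z$ is not below ${\sim}j$, so $g(j)\leq k_z$.

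The main obstacle is the reverse inequality $k_z\leq g(j)$, i.e.\ showing that $z$ lies in the first component of every $(A,B)\nleq{\sim}j$. Since $g(j)\in\mathcal{J}$ and the lattice is completely distributive, $g(j)$ is completely join-prime. My first attempt is therefore to write $g(j)$ as the join of the $k_a$ with $a$ in its first component, note that some such $a$ lies in $\breve R(x)$, and use join-primeness to get $g(j)\leq k_a$. I would then use the core property of $z$ (every $\breve R(y)$ containing $z$ contains $\breve R(x)$) together with (J1) and (J2) to identify $g(j)$ with $k_z$. The facts that $z\notin\mathcal{S}$ and that $\{z\}^\Up\in\mathcal{J}(\wp(U)^\Up)$ should then follow from $g(j)\in\mathcal{J}^+$ and the description \eqref{eq:Join1}. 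The non-emptiness of the core is supplied by \cite[Lemma~4.10]{JarRad25}, since $\{x\}^\UP\in\mathcal{J}(\wp(U)^\UP)$.

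Finally, for (iii), $\mathcal{J}^+=g(\mathcal{J}^-)$ consists of elements of the form in \eqref{eq:Join1} with the stated $z\notin\mathcal{S}$. Conversely, every element of \eqref{eq:Join1} with $x\notin\mathcal{S}$ lies outside $\mathcal{J}^-$ by (i). To see that it is not in $\mathcal{J}^\circ$, I would check that $g$ fixes $(\{x\},\{x\}^\UP)$ exactly when $x\in\mathcal{S}$, using the same least-element argument as in (ii). For such $x$, the condition $(A,B)\nleq{\sim}(\{x\},\{x\}^\UP)$ reduces to $x\in B$, which by $A\cap\mathcal{S}=B\cap\mathcal{S}$ is equivalent to $x\in A$. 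This proves (iv), and (iii) follows as the complement.
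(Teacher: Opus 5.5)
The paper does not prove this proposition; it quotes it from \cite[Proposition~5.3]{JarRad25}, so I judge your plan on its own terms. Part (i) is complete. In (ii), $g(j)\leq k_z$ is complete. Your check that $g$ fixes $(\{x\},\{x\}^\UP)$ for $x\in\mathcal{S}$ is also correct.

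\textbf{The gap is in (iii)/(iv).} Put $T:=\{k_x\mid \{x\}^\Up\in\mathcal{J}(\wp(U)^\Up),\ x\notin\mathcal{S}\}$. You obtain $\mathcal{J}^+=g(\mathcal{J}^-)\subseteq T$ and $\{(\{x\},\{x\}^\UP)\mid x\in\mathcal{S}\}\subseteq\mathcal{J}^\circ$. Both remaining inclusions, $T\subseteq\mathcal{J}^+$ and $\mathcal{J}^\circ\subseteq\{(\{x\},\{x\}^\UP)\mid x\in\mathcal{S}\}$, require that no element of $T$ is fixed by $g$.

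You assert that $g$ fixes $k_x$ exactly when $x\in\mathcal{S}$, but you verify only the case $x\in\mathcal{S}$. The least-element argument does not cover $x\notin\mathcal{S}$: there the condition $(A,B)\nleq{\sim}k_x=(\{x\}^{\Up\UP c},\{x\}^{\Up\DOWN c})$ can be met through the second component alone, with $A=\emptyset$.

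The missing observation is the following.
\begin{itemize}
\item For $x\notin\mathcal{S}$, $(\emptyset,\{x\}^\UP)\in\mathrm{DM(RS)}$: if $s\in\breve{R}(x)\cap\mathcal{S}$, then $x\in R(s)=\{s\}$, so $x=s\in\mathcal{S}$.
\item $(\emptyset,\{x\}^\UP)\nleq{\sim}k_x$, because $x\in\{x\}^\UP\cap\{x\}^{\Up\DOWN}$.
\end{itemize}
Hence $g(k_x)\leq(\emptyset,\{x\}^\UP)$, so $g(k_x)$ has empty first component, whereas $x\in\{x\}^{\Up\DOWN}$. Also $k_x\nleq{\sim}k_x$, which gives $g(k_x)\leq k_x$. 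Therefore $g(k_x)<k_x$, i.e.\ $k_x\in\mathcal{J}^+$. This proves (iii) directly, and (iv) is then the complement.

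\textbf{The reverse inequality in (ii) is only sketched.} The step you call the main obstacle, $k_z\leq g(j)$, is left as a tentative plan. Even the join representation needs justification: for $g(j)=(A_0,B_0)$ one only gets $\bigvee_{a\in A_0}k_a=(A_0,A_0^{\Up\UP})$, and equality with $g(j)$ needs the minimality of $g(j)$.

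The step is in fact a direct consequence of the core property. It needs neither join-primeness nor (J1)--(J2).
\begin{itemize}
\item Write $g(j)=(A,B)$. Since $g(j)\nleq{\sim}j$, there is some $a\in A\cap\breve{R}(x)$.
\item For each $w\in R(z)$ we have $z\in\breve{R}(w)$, so $\breve{R}(x)\subseteq\breve{R}(w)$ by the core property. Hence $a\in\breve{R}(w)$, i.e.\ $w\in R(a)$.
\item Thus $R(z)\subseteq R(a)$, i.e.\ $z\in\{a\}^{\Up\DOWN}\subseteq A$, and $k_z\leq g(j)$ by your least-element observation.
\end{itemize}

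Similarly, $z\notin\mathcal{S}$ follows at once. Indeed $z\in\{x\}^\UP$, and $\{x\}^\UP\cap\mathcal{S}=\emptyset$ is forced by $(\emptyset,\{x\}^\UP)\in\mathrm{DM(RS)}$. This avoids your forward reference to (iv).
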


Let $P$ be an ordered set with a least element $0$. $ P$ is called \emph{atomic} if 
every element $b>0$ has an atom below it, and $P$ is \emph{atomistic} if every element of $P$ is 
the join of atoms below it. The set of atoms of $P$ is denoted by
$\mathcal{A}(P)$, or simply by $\mathcal{A}$ if there
is no danger of confusion.
We can now present the following correspondences.

\begin{theorem} \label{thm:Stone_vs_Boolean}
Let $R$ be a reflexive relation on $U$. 
The following are equivalent:
\begin{enumerate}[label={\rm (\roman*)}]
\item  $\mathrm{DM(RS)}$ is a spatial, completely distributive, and regular 
pseudocomplemented Kleene algebra.
\item $\mathrm{DM(RS)}$ is a spatial and completely distributive lattice 
such that $\mathcal{J}$ has at most two levels.
\item $\wp(U)^\UP$ is an atomic Boolean lattice.
\end{enumerate}  
\end{theorem}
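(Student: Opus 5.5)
The equivalence (i)$\Leftrightarrow$(ii) is essentially Proposition~\ref{prop:at_most_two}, and the real work is (ii)$\Leftrightarrow$(iii). For (i)$\Leftrightarrow$(ii), assume $\mathrm{DM(RS)}$ is spatial and completely distributive. It is pseudocomplemented by \eqref{eq:pseudocomplemented} and a Kleene algebra by Proposition~\ref{Prop:Kleene}, so it is a pseudocomplemented Kleene algebra. By \cite[Theorem~10.29]{Davey02} its underlying lattice is algebraic, and Proposition~\ref{prop:at_most_two} then says that regularity is equivalent to $\mathcal{J}$ having at most two levels.

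For (ii)$\Leftrightarrow$(iii) I will use the fact from \cite{JarRad25} that $\mathrm{DM(RS)}$ is spatial and completely distributive if and only if $\wp(U)^\UP$ is. I will also use the description of $\mathcal{J}$ as $\mathcal{J}^-\cup\mathcal{J}^\circ\cup\mathcal{J}^+$ from Proposition~\ref{prop:partition_of_J}, together with (J1)--(J3) and the fact that elements of $\mathcal{J}^\circ$ are incomparable with all other elements of $\mathcal{J}$.

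\emph{(iii)$\Rightarrow$(ii).} An atomic Boolean complete lattice is isomorphic to the powerset of its atoms. Hence $\wp(U)^\UP$ is spatial and completely distributive, and so is $\mathrm{DM(RS)}$. Its completely join-irreducibles are exactly its atoms, $\mathcal{J}(\wp(U)^\UP)=\mathcal{A}(\wp(U)^\UP)$.
\begin{enumerate}
\item Elements of $\mathcal{J}^-$ are atoms of $\mathrm{DM(RS)}$. Take $(\emptyset,\{x\}^\UP)\in\mathcal{J}^-$ with $x\notin\mathcal{S}$. Then $\{x\}^\UP$ is an atom of $\wp(U)^\UP$, and $\{x\}^\DOWN=\emptyset$: indeed $R(z)\subseteq\{x\}$ with $R$ reflexive forces $z=x$ and $R(x)=\{x\}$, which is impossible. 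So by \eqref{eq:atoms}, $(\emptyset,\{x\}^\UP)$ is an atom.
\item Elements of $\mathcal{J}^+$ are maximal in $\mathcal{J}$. If $k\in\mathcal{J}^+$ and $k<l\in\mathcal{J}$, then (J1), (J2) give $g(l)<g(k)$. Since $g(k)\in\mathcal{J}^-$ is an atom, this forces $g(l)=0\notin\mathcal{J}$, a contradiction.
\end{enumerate}
Combining these with the isolation of $\mathcal{J}^\circ$, every strict relation $j<k$ in $\mathcal{J}$ has $j\in\mathcal{J}^-$, and $j$ is then an atom. So $\mathcal{J}$ has at most two levels.

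\emph{(ii)$\Rightarrow$(iii).} Now $\wp(U)^\UP$ is spatial and completely distributive, so it is isomorphic to the lattice of down-sets of $\mathcal{J}(\wp(U)^\UP)$ via $\mathcal{J}=\mathcal{J}_p$. It is therefore enough to show that $\mathcal{J}(\wp(U)^\UP)$ is an antichain. Its elements are then atoms and the lattice is a powerset, hence atomic Boolean. Suppose $\{x\}^\UP\subsetneq\{y\}^\UP$ with both completely join-irreducible. I split into cases according to membership in $\mathcal{S}$.
\begin{itemize}
\item If $y\in\mathcal{S}$, then $x\in\breve{R}(y)$ gives $y\in R(x)$. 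If moreover $x\in\mathcal{S}$, then $R(x)=\{x\}$ forces $x=y$, a contradiction. If $x\notin\mathcal{S}$, then $(\emptyset,\{x\}^\UP)\in\mathcal{J}^-$ lies below $(\{y\},\{y\}^\UP)\in\mathcal{J}^\circ$, contradicting the isolation of $\mathcal{J}^\circ$.
\item If $x,y\notin\mathcal{S}$, then $(\emptyset,\{x\}^\UP)<(\emptyset,\{y\}^\UP)<g(\emptyset,\{y\}^\UP)$ is a three-element chain in $\mathcal{J}$, contradicting (ii).
\item The remaining case $x\in\mathcal{S}$, $y\notin\mathcal{S}$ is the step I expect to be the main obstacle. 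Here I would take $z\in\mathfrak{core}\breve{R}(y)$ as in Proposition~\ref{prop:partition_of_J}(ii) and try to show $(\{x\},\{x\}^\UP)\le(\{z\}^{\Up\DOWN},\{z\}^{\Up\UP})=g(\emptyset,\{y\}^\UP)$, again contradicting isolation. Failing that, I would derive from $\{x\}^\UP\subseteq\{y\}^\UP$ and $R(x)=\{x\}$ that $x\in\mathfrak{core}\breve{R}(y)$, which would contradict $z\notin\mathcal{S}$ being forced for every core element.
\end{itemize}
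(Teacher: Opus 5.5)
Your (i)$\Leftrightarrow$(ii) is the paper's argument, but (ii)$\Rightarrow$(iii) is unfinished. The case $x\in\mathcal{S}$, $y\notin\mathcal{S}$ is not proved. Your second strategy also cannot work as stated: $x\in\mathfrak{core}\breve{R}(y)$ would, taking $w=x$, require $\breve{R}(y)\subseteq\breve{R}(x)$, which is the reverse of your hypothesis.

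The missing observation is the one the paper uses: the smaller element of a strict inclusion $\{x\}^\UP\subsetneq\{y\}^\UP$ is never a singleton. Indeed, $x\in\{x\}^\UP\subseteq\{y\}^\UP=\breve{R}(y)$ gives $y\in R(x)$, so $x\in\mathcal{S}$ forces $y=x$. Your own subcase argument in the first bullet already proves this and never uses $y\in\mathcal{S}$, so the troublesome case is vacuous.

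Split on the smaller element instead. The case $x\in\mathcal{S}$ is impossible. For $x\notin\mathcal{S}$ with $y\in\mathcal{S}$, you get the three-element chain $(\emptyset,\{x\}^\UP)<(\{y\},\{y\}^\UP)=g(\{y\},\{y\}^\UP)<g(\emptyset,\{x\}^\UP)$. For $x,y\notin\mathcal{S}$, your own chain works. This reproduces the paper's proof. Under (ii), your appeal to isolation of $\mathcal{J}^\circ$ is legitimate: any comparability between some $j=g(j)$ and another $k\in\mathcal{J}$ yields a three-element chain via (J1)--(J2).

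In (iii)$\Rightarrow$(ii), however, you use isolation of $\mathcal{J}^\circ$ as a general fact. It is not true without an extra hypothesis such as (ii), even though it is recalled unconditionally before Proposition~\ref{prop:partition_of_J}. Take the quasiorder $R(1)=\{1\}$, $R(2)=\{1,2\}$ on $U=\{1,2\}$. Then $\mathrm{DM(RS)}=\mathrm{RS}$ is the chain $(\emptyset,\emptyset)<(\emptyset,\{2\})<(\{1\},U)<(U,U)$, with ${\sim}(\emptyset,\{2\})=(\{1\},U)$. Here $(\{1\},U)\in\mathcal{J}^\circ$ lies strictly between two elements of $\mathcal{J}$.

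So in this direction isolation is part of what must be proved; the paper derives it from (iii), using that $\wp(U)^\Up$ is also atomic Boolean. Inside your framework the repair is short:
\begin{enumerate}
\item For $x\in\mathcal{S}$ we have $\{x\}^\DOWN=\{x\}$.
\item $\{x\}^\UP$ is completely join-prime in $\wp(U)^\UP$: if $\{x\}^\UP\subseteq\bigcup_i\{y_i\}^\UP$, then $x\in\{y_i\}^\UP$ for some $i$, so $y_i\in R(x)=\{x\}$. Hence $\{x\}^\UP$ is an atom by (iii).
\item By \eqref{eq:atoms}, every element of $\mathcal{J}^\circ$, like every element of $\mathcal{J}^-$, is therefore an atom of $\mathrm{DM(RS)}$.
\item Now if $j<k$ in $\mathcal{J}$, then $0<g(k)<g(j)$, so $g(j)$ is not an atom. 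Hence $g(j)\in\mathcal{J}^+$, and $j\in\mathcal{J}^-$ is an atom.
\end{enumerate}
With this repair, your (iii)$\Rightarrow$(ii) is a clean alternative to the paper's pigeonhole argument over $\mathcal{J}^-$ and $\mathcal{J}^+$. It uses atoms in $\mathcal{J}^-\cup\mathcal{J}^\circ$ transported by $g$, and does not even need your maximality claim for $\mathcal{J}^+$.
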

    
\begin{proof} 
If $\mathrm{DM(RS)}$ is completely distributive, as in (i) and (ii), 
then it is pseudocomplemented, and it forms a Kleene algebra as described in 
Proposition~\ref{Prop:Kleene}. 
Moreover, (i) and (ii) 
are equivalent by Proposition~\ref{prop:at_most_two}, 
whenever $\mathrm{DM(RS)}$ is spatial and completely distributive.

\medskip\noindent%
$\mathrm{(ii)}\Rightarrow\mathrm{(iii)}$: 
Let $\mathrm{DM(RS)}$ be a spatial and completely distributive lattice and suppose that $\mathcal{J}$ has at most two levels.
We assume for contradiction that $\wp(U)^\UP$ is not a 
Boolean lattice. 
Since $\wp(U)^\UP$ is completely distributive and spatial, this
means that there exists a completely join-irreducible element $\{x\}^\UP$ of 
$\wp(U)^\UP$ that is not an atom. Hence, there exists $\{z\}^\UP \in \wp(U)^\UP$ with
$\{z\}^\UP \subset \{x\}^\UP$. Observe that $z$ can not
be a singleton. Indeed, $z\in\{z\}^\UP \subset
\{x\}^\UP$ and $z\in \mathcal{S}$ would imply $x\in R(z)=\{z\}$. So, we
would get $x = z$ and $\{x\}^\UP = \{z\}^\UP$, a contradiction. Hence, in
view of Proposition~\ref{prop:partition_of_J}, $(\{z\}^\DOWN,\{z\}^{\UP}) =
(\emptyset,\{z\}^\UP)$ belongs to  $\mathcal{J}^{-}$. 

If $x\in \mathcal{S}$, then $(\{x\},\{x\}^\UP)\in \mathcal{J}^{\circ}$ and
\[ (\emptyset,\{z\}^\UP) < (\{x\},\{x\}^\UP) = g(\{x\},\{x\}^\UP)
<  g(\emptyset,\{z\}^\UP).\]
On the other hand, if $x \notin \mathcal{S}$, then
$(\emptyset,\{x\}^\UP) \in \mathcal{J}^{-}$ by Proposition~\ref{prop:partition_of_J}. Now,
\[ (\emptyset,\{z\}^\UP) < (\emptyset,\{x\}^\UP) < g(\emptyset,\{x\}^\UP). \]
In both cases, we obtained a chain with three distinct
elements in $\mathcal{J}$, a contradiction to (ii). Thus, 
$\wp(U)^\UP$ is a Boolean lattice. As $\wp(U)^\UP$ is completely
distributive, it is atomic. Thus, $\mathrm{(ii)}\Rightarrow\mathrm{(iii)}$. 

\medskip\noindent%
$\mathrm{(iii)}\Rightarrow\mathrm{(ii)}$:
Let $\wp(U)^\UP$ be an atomic Boolean lattice. 
It is completely distributive and all its completely join-irreducible 
elements are atoms. 

Because $\wp(U)^\UP$ is a Boolean lattice, it is self-dual, 
that is, $(\wp(U)^\UP,\subseteq) \cong (\wp(U)^\UP,\supseteq)$.
Since $(\wp(U)^\UP,\supseteq)$ is isomorphic to 
$(\wp(U)^{\Up}, \subseteq)$, $\wp(U)^{\Up}$ is also an 
atomistic Boolean lattice such that all its completely 
join-irreducible elements are atoms. 

The facts that $\wp(U)^\UP$ and $\wp(U)^\Up$ are completely
distributive and spatial imply that $\mathrm{DM(RS)}$ is completely distributive and spatial in view of 
Propositions~{4.1} and~{4.4} of \cite{JarRad25}.

Let us observe first that for any $j \in \mathcal{J}^{\circ}$ 
and $k \in \mathcal{J}$, neither $j < k$ nor $k < j$ is  
possible. Since $k < j$ implies $j = g(j) < g(k)$ and 
$g(k) \in \mathcal{J}$, it is enough to consider only
the first case. Then $g(k)<g(j)=j<k$ implies $k \in \mathcal{J}^{+}$. 
Now $j \in \mathcal{J}^{\circ}$ means that 
$j = (\{x\},\{x\}^\UP)$ for some $x\in \mathcal{S}$. Similarly,
$k \in \mathcal{J}^{+}$ gives that 
$k=(\{y\}^{\Up \DOWN},\{y\}^{\Up\UP})$ for some
$\{y\}^{\Up}\in\mathcal{J}(\wp(U)^\Up)$ and $y\notin \mathcal{S}$. 
Since $(\{x\},\{x\}^\UP) < (\{y\}^{\Up\DOWN},\{y\}^{\Up\UP})$,
we have $\{x\} \subseteq \{y\}^{\Up\DOWN}$ and
$\{x\}^\Up \subseteq \{y\}^{\Up\DOWN \Up} = \{y\}^\Up$.
Because $\{y\}^\Up$ is a completely join-irreducible element of 
$\wp(U)^\Up$, it is an atom, as we noted above. 
We get $y \in \{y\}^\Up = \{x\}^\Up = R(x) = \{x\}$. 
Thus, $y = x \in \mathcal{S}$, a contradiction.

Assume for the sake of contradiction that $\mathcal{J}$ 
contains a chain $j < k < p$.
As we already noted, none of these elements can belong to 
$\mathcal{J}^{\circ}$. 
Then either: (a) at least two elements of this chain have the 
form $(\emptyset,\{x\}^\UP),(\emptyset,\{y\}^\UP)\in \mathcal{J}^-$ with 
$\{x\}^\UP$ and $\{y\}^\UP$ belonging to $\mathcal{J}(\wp(U)^\UP)$, 
$x,y\in \mathcal{S}$, and $(\emptyset,\{x\}^\UP)<(\emptyset,\{y\}^\UP)$; 
or (b) at least two elements of this chain have the form 
$(\{x\}^{\Up\DOWN},\{x\}^{\Up\UP})$, $(\{y\}^{\Up\DOWN},\{y\}^{\Up\UP})\in \mathcal{J}^{+}$ with 
$\{x\}^{\Up}, \{y\}^{\Up}\in\mathcal{J}(\wp(U)^{\Up})$, 
$x,y \notin \mathcal{S}$, and 
$(\{x\}^{\Up\DOWN},\{x\}^{\Up\UP})<(\{y\}^{\Up\DOWN},\{y\}^{\Up\UP})$.

In case (a), $(\emptyset,\{x\}^\UP)<(\emptyset,\{y\}^\UP)$ implies 
$\{x\}^\UP \subseteq\{y\}^\UP$, $\{x\}^\UP\neq\{y\}^\UP$.
However, this is impossible, because $\{x\}^\UP$ and
$\{y\}^\UP$ are atoms of the Boolean lattice $\wp(U)^\UP$. 

In case (b), $\{x\}^{\Up\DOWN} \subseteq \{y\}^{\Up\DOWN}$ 
implies $\{x\}^{\Up} = \{x\}^{\Up\DOWN\Up} \subseteq 
\{y\}^{\Up \DOWN\Up} = \{y\}^{\Up}$. As $\{x\}^{\Up}$ and $\{y\}^{\Up}$ 
are atoms of the Boolean lattice $\wp(U)^\Up$, 
we get $\{x\}^{\Up} = \{y\}^{\Up}$, which implies 
$j=(\{x\}^{\Up\DOWN },\{x\}^{\Up\UP}) = 
(\{y\}^{\Up\DOWN},\{y\}^{\Up\UP})=k$, a contradiction again. Thus, 
$\mathcal{J}$ has at most two levels and 
$\mathrm{(iii)}\Rightarrow\mathrm{(ii)}$ holds.
\end{proof}

\begin{remark} \label{rem:Australian} 
A collection $\mathcal{H}$ of nonempty subsets of $U$ is called a 
\emph{covering} of $U$ if $\bigcup \mathcal{H} = U$. 
A covering $\mathcal{H}$ is \emph{irredundant} if $\mathcal{H} \setminus \{X\}$ 
is not a  covering for any $X \in \mathcal{H}$. 
A \emph{tolerance relation} is a reflexive and symmetric binary relation.
Each covering $\mathcal{H}$ \emph{induces} a tolerance relation
$\bigcup \{X \times X \mid X \in \mathcal{H}\}$.

In \cite{JarRad14}, we studied approximations and rough sets 
induced by a tolerance relation $R$. We showed that the complete 
lattices $\wp(U)^\DOWN$ and $\wp(U)^\UP$ are completely 
distributive if and only if $R$ is induced by an irredundant covering of $U$. Moreover, 
when $R$ is a tolerance induced by an irredundant covering of $U$, $\wp(U)^\DOWN$ and $\wp(U)^\UP$ 
are atomistic Boolean lattices where 
 $\{R(x)^\DOWN \mid \text{$R(x)$ is a block} \}$ and 
$\{R(x) \mid \text{$R(x)$ is a block} \}$ are the respective 
sets of atoms. A \emph{block} is a maximal $R$-compatible set, 
that is, every pair of its elements is $R$-related.

Furthermore, we demonstrated in \cite{JarRad14} that 
$\mathrm{RS}$ is a spatial and completely distributive 
lattice if and only if 
$R$ is induced by an irredundant covering. We also 
showed that whenever $R$ is a tolerance induced by an irredundant covering,  
$\mathrm{RS}$ is a Kleene algebra.

Theorem~\ref{thm:Stone_vs_Boolean} 
is a strong extension of our former results. Now we know 
that $\mathrm{DM(RS)}$ is a spatial, completely distributive, and
regular pseudocomplemented Kleene algebra whenever
$\wp(U)^\UP$ is an atomistic Boolean lattice, and this
result applies to any reflexive relation. For instance,
the relation $R$ considered in Section~\ref{sec:intro} is such that
$\wp(U)^\UP$ is finite, and hence atomistic, Boolean
lattice: $\{ \emptyset, \{1,2\}, \{1,3\}, U\}$. By
Theorem~\ref{thm:Stone_vs_Boolean}, 
$\mathrm{DM(RS)} = \mathrm{RS}$ defined by this 
$R$ is a regular double Stone algebra, as
mentioned in the Introduction.

Note also that if $R(x)$ is a block, then 
$R(x) \subseteq R(y)$ for all $y \in R(x)$ 
due to the maximality and compatibility of blocks.
Hence, $\mathfrak{core}R(x) = R(x)$.
\end{remark}

\section{Stone algebra} \label{sec:Stone}

A \emph{Stone algebra} is a pseudocomplemented distributive lattice $L$ such that for every $x \in L$,
\begin{equation}\label{eq:Stone_equation}
 x^* \vee x^{**} = 1.
\end{equation}
In a Stone algebra, the identity $(a\wedge b)^* = a^* \vee b^*$  holds.
In this section, we give necessary and sufficient conditions under which $\mathrm{DM(RS)}$ 
forms a completely distributive and spatial Stone algebra. 
We begin with the following observation.

\begin{lemma} \label{lem:Stone}
Let $L$ be a completely distributive and spatial
lattice. Then the following are equivalent.
\begin{enumerate}[label={\rm (\roman*)}]
\item The $p$-algebra $(L,\vee,\wedge,^*,0,1)$ is a Stone algebra.
\item For any completely join-irreducible element $j$ and all
elements $x,y \in L$ with $0 < x,y \leq j$, we have $x \wedge y \neq 0$.
\end{enumerate}
\end{lemma}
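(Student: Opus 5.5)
The plan is to prove both implications directly, using the formula \eqref{eq:pseudocomplemented}, the fact that in a completely distributive lattice completely join-irreducible elements are completely join-prime, and spatiality. The one reformulation I will use throughout is: for $j\in\mathcal{J}$ and $x\in L$, we have $j\le x^*$ iff $j\wedge x=0$. The forward direction is immediate from the definition of $x^*$. For the converse, note that $j\le x^{*}$ implies $j\wedge x\le x^*\wedge x=0$.

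\emph{(i)$\Rightarrow$(ii).} Assume $L$ is Stone. Take $j\in\mathcal{J}$ and $0<x,y\le j$, and suppose for contradiction that $x\wedge y=0$. Then $y\le x^*$, so $y\wedge x^{**}\le x^*\wedge x^{**}=0$. Since $y\neq 0$, this gives $y\nleq x^{**}$, and therefore $j\nleq x^{**}$. Also $x\le x^{**}$ and $x\wedge x^*=0$, so $x\le j\wedge x^{**}$ while $j\wedge x^* \ge$ nothing forced. I instead argue more simply. From $j\le 1=x^*\vee x^{**}$ and complete join-primeness, we get $j\le x^*$ or $j\le x^{**}$. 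If $j\le x^*$, then $x=x\wedge j\le x\wedge x^*=0$, which is a contradiction. If $j\le x^{**}$, then $y\le x^{**}$, and together with $y\le x^*$ this gives $y=0$, again a contradiction.

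\emph{(ii)$\Rightarrow$(i).} Since $L$ is completely distributive, it is a pseudocomplemented distributive lattice, so I only need $x^*\vee x^{**}=1$. By spatiality, $1=\bigvee\mathcal{J}$, so it suffices to show that every $j\in\mathcal{J}$ lies below $x^*$ or below $x^{**}$. Suppose $j\nleq x^*$; by the reformulation above, $a:=j\wedge x\neq 0$. I claim $j\le x^{**}$, that is, $j\wedge x^*=0$. If instead $b:=j\wedge x^*\neq 0$, then $a$ and $b$ are nonzero elements below $j$. Hypothesis (ii) then gives $a\wedge b\ne0$. But $a\wedge b\le x\wedge x^*=0$, a contradiction. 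Hence $j\le x^{**}$, and joining over all $j$ gives $x^*\vee x^{**}=1$.

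The only delicate point is the use of completely join-prime elements in the first direction and of spatiality to reduce $x^*\vee x^{**}=1$ to a statement about join-irreducibles in the second. Both rely on the fact, cited from \cite{Balachandran55}, that in a completely distributive lattice completely join-irreducible elements are completely join-prime. I expect no real obstacle beyond keeping the reformulation $j\le z^*\iff j\wedge z=0$ clean.
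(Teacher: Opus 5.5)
Your proof is correct and follows the paper's argument. The (i)$\Rightarrow$(ii) direction is identical, using complete join-primeness of $j$ applied to $j\le x^*\vee x^{**}$; in (ii)$\Rightarrow$(i) you apply (ii) to $j\wedge x$ and $j\wedge x^*$ instead of the paper's $j\wedge x^*$ and $j\wedge x^{**}$, a cosmetic variation that merely avoids invoking $x^{***}=x^*$ (just delete the abandoned false start ending ``nothing forced'' in the first direction).
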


\begin{proof}
$\mathrm{(i)}\Rightarrow\mathrm{(ii)}$:  
Suppose that $(L,\vee,\wedge,^*,0,1)$ is a Stone algebra.
Let $j \in \mathcal{J}$ and $x,y\in L$ such that $0<x,y\leq j$.
Assume for the sake of contradiction that $x \wedge y = 0$, that is, 
$y\leq x^*$. Since $L$ is a Stone algebra,
$x^*\vee x^{**}=1$ implies $j\leq x^*\vee x^{**}$.
Because any completely join-irreducible element of a completely distributive lattice is completely join-prime, it follows that
(a) $j \leq x^*$ or (b) $j\leq x^{**}$. Case~{(a)}: We have  
$x = x \wedge j \leq x \wedge x^* = 0$, a contradiction. 
Case~{(b)}: Now $y = y \wedge j \leq x^* \wedge x^{**}=0$, which is also a contradiction. Thus, we conclude that $x \wedge y\neq 0$.

$\mathrm{(ii)}\Rightarrow\mathrm{(i)}$:  
Assume for the sake of contradiction that (ii) holds but there exists 
$x\in L$ such that  $x^*\vee x^{**}<1$. Since $L$ is spatial,
$\bigvee \mathcal{J} = 1$. Hence, there exists  
$j \in \mathcal{J}$ with $j \nleq x^*\vee x^{**}$. 
Define $a := j \wedge x^*$ and $b := j\wedge x^{**}$. 
Note that $a \neq 0$, as $j \wedge x^* = 0$ would imply 
$j \leq x^{**}\leq x^*\vee x^{**}$, a contradiction.
Similarly, $b \neq 0$ since $j\wedge x^{**} = 0$ would
give $j \leq x^{***} = x^* \leq x^*\vee x^{**}$.
Thus, we have $0 < a,b \leq j$. However, 
$a \wedge b \leq x^*\wedge x^{**}=0$, which contradicts (ii). 
It follows that $x^* \vee x^{**}=1$ must hold for all $x\in L$. 
Therefore, $(L,\vee,\wedge,^*,0,1)$ is a Stone algebra.
\end{proof}

\begin{example}
(a) Let us consider three examples presented in \cite{Blyth2005}. 
Any bounded chain is a distributive pseudocomplemented lattice
such that $\mathcal{J}$ consists of all nonzero elements.
Obviously,  condition (ii) of Lemma~\ref{lem:Stone} is true for all elements $x,y \in \mathcal{J}$.
Thus, any bounded chain is a Stone algebra, as stated also in
Example~{7.6} of Blyth's book.

Example~{7.7} says that if $L_1,\dots,L_n$ are Stone algebras, 
then the direct product $\prod_{i=1}^n L_i$ is a Stone algebra 
such that the pseudocomplement $(x_1,\dots,x_n)^*$ is 
$(x_1^*, \ldots, x_n^*)$. It follows that
any finite product of bounded chains is a Stone algebra.

Let $L$ be the set of positive divisors of a positive integer 
$n$. We may order this set by divisibility: $a \le b \iff a \mid b$.
If $n = p^{\alpha}$,  where $p$ is a prime, then this lattice is an 
$(\alpha + 1)$-element chain.  
If $n = p_1^{\alpha_1} \cdot p_2^{\alpha_2} \cdots p_k^{\alpha_k}$, 
where $p_1, \ldots, p_k$ are distinct primes, then $L$
is a direct product of $k$ finite chains. By the above,
$L$ is a Stone algebra.

\medskip%
(b) Also, the finite distributive lattice $\mathrm{RS}$ on page~\pageref{fig:stonean} in Figure~\ref{fig:stonean} (right) 
 is a Stone algebra. Its completely join-irreducible elements 
are marked with black circles. It is easy to verify that condition (ii) of Lemma~\ref{lem:Stone} holds.
\end{example}

\begin{lemma} \label{lem:Stone_backwards}
Let $R$ be a reflexive relation on $U$. If $\mathrm{DM(RS)}$ is a Stone algebra, then $\wp(U)^\UP$ is a Stone algebra
such that $B^{* \Down}\cup B^{**\Down}=U$ for any $B\in\wp(U)^\UP$.
\end{lemma}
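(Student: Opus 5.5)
The plan is to transfer the Stone property from $\mathrm{DM(RS)}$ to $\wp(U)^\UP$ through the second projection $\pi_2 \colon (A,B) \mapsto B$. Then I read off the extra condition from the \emph{first} components of the Stone identity, using Proposition~\ref{prop:pseudomplement}.

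\textbf{Step 1: $\pi_2$ is a surjective lattice homomorphism.} By \eqref{Eq:Meet} and \eqref{Eq:Join}, $\pi_2$ sends meets and joins of $\mathrm{DM(RS)}$ to the meets $(\bigcap B_i)^{\Down\UP}$ and joins $\bigcup B_i$ of $\wp(U)^\UP$. By Lemma~\ref{lem:exists_pair}(ii), every $B \in \wp(U)^\UP$ is the second component of $(B^{\Down\DOWN},B) \in \mathrm{DM(RS)}$, so $\pi_2$ is onto. Since $\mathrm{DM(RS)}$ is distributive, so is its homomorphic image $\wp(U)^\UP$.

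\textbf{Step 2: $\wp(U)^\UP$ is pseudocomplemented and $\pi_2$ preserves ${}^*$.} Take $x=(A,B)\in\mathrm{DM(RS)}$ and write $x^*=(C,D)$.
\begin{itemize}
\item Since $\pi_2$ preserves meets, $B\wedge D=\pi_2(x\wedge x^*)=\emptyset$.
\item Now let $Y\in\wp(U)^\UP$ satisfy $Y\wedge B=\emptyset$, and put $y=(Y^{\Down\DOWN},Y)$. The second component of $y\wedge x$ is $\emptyset$.
\item For any $(P,Q)\in\mathrm{DM(RS)}$, reflexivity gives $P\subseteq P^{\Up\UP}\subseteq Q$. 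Hence the first component of $y\wedge x$ is also $\emptyset$, so $y\wedge x$ is the bottom.
\item Therefore $y\le x^*$, which gives $Y\subseteq D$.
\end{itemize}
Thus $D$ is the pseudocomplement of $B$ in $\wp(U)^\UP$, i.e.\ $\pi_2(x^*)=\pi_2(x)^*$.

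Applying $\pi_2$ to $x^*\vee x^{**}=(U,U)$ then yields $B^*\cup B^{**}=U$, since joins in $\wp(U)^\UP$ are unions. So $\wp(U)^\UP$ is a Stone algebra.

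\textbf{Step 3: the condition $B^{*\Down}\cup B^{**\Down}=U$.} This is the step needing some care. Since $\wp(U)^\UP$ is now known to be pseudocomplemented, Proposition~\ref{prop:pseudomplement} applies. For $x=(B^{\Down\DOWN},B)$ it gives
\[
x^*=(B^{*\Down\DOWN},B^*), \qquad x^{**}=(B^{**\Down\DOWN},B^{**}).
\]
By \eqref{Eq:Join}, the first component of $x^*\vee x^{**}=(U,U)$ yields $(B^{*\Down\DOWN}\cup B^{**\Down\DOWN})^{\Up\DOWN}=U$.

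Next I use that $X^{\Up\DOWN}=U$ forces $X^\Up=U$. Indeed, for every $y$ we have $R(y)\subseteq X^\Up$, and $y\in R(y)$ by reflexivity. Combining this with (GC4), we obtain
\[
B^{*\Down\DOWN\Up}\cup B^{**\Down\DOWN\Up}=U.
\]
Finally, (GC2) gives $Z^{\DOWN\Up}\subseteq Z$. Applying it with $Z=B^{*\Down}$ and $Z=B^{**\Down}$ gives $B^{*\Down}\cup B^{**\Down}=U$.

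I expect the only delicate point to be Step~2: showing that $\wp(U)^\UP$ is pseudocomplemented before Proposition~\ref{prop:pseudomplement} may be invoked. The observation that first components are always contained in second components handles it.
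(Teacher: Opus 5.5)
Your proposal is correct and follows the paper's proof: apply the Stone identity to $(B^{\Down\DOWN},B)$ and compute both pseudocomplements via Proposition~\ref{prop:pseudomplement}. The second components give the Stone identity in $\wp(U)^\UP$, and the first components give $B^{*\Down}\cup B^{**\Down}=U$ via $X^{\Up\DOWN}\subseteq X^\Up$, (GC4) and (GC2). Your Steps~1--2 also verify that $\wp(U)^\UP$ is distributive and pseudocomplemented, which is the hypothesis of Proposition~\ref{prop:pseudomplement}. The paper's proof uses this without stating it explicitly, so your version is slightly more complete.
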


\begin{proof}
Let $B\in\wp(U)^\UP$. By Lemma~\ref{lem:exists_pair},
$(B^{\Down\DOWN},B)$ belongs to $\mathrm{DM(RS)}$. Since
$\mathrm{DM(RS)}$ is a Stone lattice,  
$(B^{\Down\DOWN},B)^*\vee (B^{\Down\DOWN},B)^{**}=(U,U)$.
By Proposition~\ref{prop:pseudomplement},
$(B^{\Down\DOWN},B)^* = (B^{*\Down\DOWN},B^*)$ and
$(B^{\Down\DOWN},B)^{**} = (B^{** \Down\DOWN},B^{**})$. Hence,
\begin{equation} \label{eq:Stone}
(B^{*\Down\DOWN},B^*)\vee (B^{** \Down\DOWN},B^{**})=(U,U).
\end{equation}
From the second components of \eqref{eq:Stone}, 
we get $B^*\vee B^{**} =U$, meaning that
$\wp(U)^\UP$ is a Stone algebra. 
The first components of \eqref{eq:Stone}
yield 
$(B^{*\Down \DOWN} \vee B^{**\Down\DOWN}) = 
(B^{*\Down \DOWN}\cup B^{**\Down\DOWN})^{\Up\DOWN} = U$.
This gives $(B^{*\Down \DOWN}\cup B^{**\Down\DOWN})^\Up=U$ and
further $B^{*\Down \DOWN \Up}\cup B^{**\Down\DOWN\Up} = U$.
Since $B^{*\Down\DOWN\Up} \subseteq B^{*\Down}$ and
$B^{**\Down \DOWN\Up}\subseteq B^{**\Down}$, we infer that
$B^{*\Down}\cup B^{**\Down}=U$. 
\end{proof}

\begin{corollary} \label{cor:Stone_nonempty}
Let $R$ be a reflexive relation. If $\mathrm{DM(RS)}$ is a Stone algebra, 
then for any $x,y,z\in U$ satisfying 
$\{x\}^\UP,\{y\}^\UP \subseteq \{z\}^\UP$, the meet
$\{x\}^\UP\wedge\{y\}^\UP$ is nonempty in $\wp(U)^\UP$.
\end{corollary}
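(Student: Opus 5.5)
The plan is to argue by contradiction, using Lemma~\ref{lem:Stone_backwards} as the only substantial input. First I note that reflexivity gives $w \in \breve{R}(w) = \{w\}^\UP$ for every $w \in U$, so every set of the form $\{w\}^\UP$ is nonempty. I also note that $X^\Down = \{w \in U \mid \{w\}^\UP \subseteq X\}$, since $\breve{R}(w) = \{w\}^\UP$.

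Assume $\mathrm{DM(RS)}$ is a Stone algebra. By Lemma~\ref{lem:Stone_backwards}, $\wp(U)^\UP$ is a Stone algebra, in particular pseudocomplemented, and
\[ B^{*\Down} \cup B^{**\Down} = U \quad \text{for every } B \in \wp(U)^\UP. \]
Let $x,y,z$ satisfy $\{x\}^\UP,\{y\}^\UP \subseteq \{z\}^\UP$, and suppose for contradiction that $\{x\}^\UP \wedge \{y\}^\UP = \emptyset$ in $\wp(U)^\UP$.

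Put $B := \{x\}^\UP$. From $\{y\}^\UP \wedge B = \emptyset$ we get $\{y\}^\UP \subseteq B^*$. From $B \wedge B^* = \emptyset$ we get $B \subseteq B^{**}$, that is, $\{x\}^\UP \subseteq B^{**}$.

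Now apply the displayed identity to the element $z$. Either $z \in B^{*\Down}$ or $z \in B^{**\Down}$.
\begin{itemize}
\item If $z \in B^{*\Down}$, then $\{x\}^\UP \subseteq \{z\}^\UP \subseteq B^*$. Hence $\{x\}^\UP \subseteq B \cap B^*$. Since $\{x\}^\UP \in \wp(U)^\UP$ and $X \mapsto X^{\Down\UP}$ is the interior operator of this interior system, this yields $\{x\}^\UP \subseteq (B \cap B^*)^{\Down\UP} = B \wedge B^* = \emptyset$. This contradicts $x \in \{x\}^\UP$.
\item If $z \in B^{**\Down}$, then $\{y\}^\UP \subseteq \{z\}^\UP \subseteq B^{**}$. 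Together with $\{y\}^\UP \subseteq B^*$, the same interior argument gives $\{y\}^\UP \subseteq B^* \wedge B^{**} = \emptyset$. This contradicts $y \in \{y\}^\UP$.
\end{itemize}
Either way we reach a contradiction, so $\{x\}^\UP \wedge \{y\}^\UP \neq \emptyset$.

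There is no real obstacle; everything rests on Lemma~\ref{lem:Stone_backwards}. The one point needing care is passing from an inclusion into $B \cap B^*$ to an inclusion into the lattice meet $(B \cap B^*)^{\Down\UP}$. This step is valid only because the included set already belongs to $\wp(U)^\UP$, and I will state it explicitly in the proof.
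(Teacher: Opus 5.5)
Your proof is correct and follows essentially the same route as the paper. Both apply Lemma~\ref{lem:Stone_backwards} to a principal element $B$, split on whether $z \in B^{*\Down}$ or $z \in B^{**\Down}$, and in each case derive that $\{x\}^\UP$ or $\{y\}^\UP$ lies below an empty meet. The only difference is that you take $B=\{x\}^\UP$ where the paper takes $B=\{y\}^\UP$, which is immaterial by symmetry.
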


\begin{proof} Assume $\{x\}^\UP,\{y\}^\UP \subseteq \{z\}^\UP$. 
Since  $\mathrm{DM(RS)}$ is a Stone algebra,
Lemma~\ref{lem:Stone_backwards} implies 
$\{y\}^{\UP * \Down} \cup\{y\}^{\UP **\Down} = U$.
It follows that $z\in\{z\}^\UP \subseteq 
\{y\}^{\UP * \Down}\cup\{y\}^{\UP**\Down}$, so
$z\in\{y\}^{\UP * \Down}$ or $z\in\{y\}^{\UP ** \Down}$. 
Suppose for the sake of contradiction that 
$\{x\}^\UP\wedge \{y\}^\UP=\emptyset$ in $\wp(U)^\UP$, which implies
$\{x\}^\UP \subseteq \{y\}^{\UP *}$. 
Now $z\in\{y\}^{\UP * \Down}$
implies $\{z\}^\UP\subseteq \{y\}^{\UP * \Down \UP} \subseteq
\{y\}^{\UP *}$, whence $y\in\{y\}^\UP = \{y\}^\UP\wedge \{z\}^\UP \subseteq 
\{y\}^\UP\wedge \{y\}^{\UP * }=\emptyset$, a contradiction. 
Similarly, $z\in\{y\}^{\UP**\Down}$ implies
$\{z\}^\UP\subseteq\{y\}^{\UP**}$, and this
yields $x\in\{x\}^\UP = \{x\}^\UP\wedge \{z\}^\UP \subseteq
\{y\}^{\UP *} \wedge \{y\}^{\UP**} = \emptyset$, a contradiction again. 
In either case, we reach a contradiction. Thus, we must have
$\{x\}^\UP\wedge\{y\}^\UP\neq\emptyset$.
\end{proof}

We introduce the following two conditions:
\begin{equation} \tag{St1} \label{eq:St1}
\begin{minipage}[c]{0.9\textwidth}
     If $\{x\}^\UP, \{y\}^\UP \subseteq \{p\}^\UP$, then there exists $z \in U$ such that $\{z\}^\UP \subseteq \{x\}^\UP, \{y\}^\UP$
\end{minipage}
\end{equation}
\begin{equation} \tag{St2} \label{eq:St2}
\begin{minipage}[c]{0.9\textwidth}
    If $\{x\}^\UP, \{y\}^\UP \subseteq \{p\}^{\Up \UP}$, where $\{p\}^\Up \in \mathcal{J}(\wp(U)^\Up)$, then
    there exists $z \in U$ such that $\{z\}^\UP \subseteq \{x\}^\UP, \{y\}^\UP$.
\end{minipage}
\end{equation}

\begin{theorem} \label{thm:Stone}
Let $R$ be a reflexive relation on $U$. If $\mathrm{DM(RS)}$ is a completely 
distributive and spatial lattice, then the following are equivalent.
\begin{enumerate}[label={\rm (\roman*)}]
\item Condition \eqref{eq:St1} holds.
\item Condition \eqref{eq:St2} holds.
\item $\mathrm{DM(RS)}$ is a Stone algebra.
\end{enumerate}
\end{theorem}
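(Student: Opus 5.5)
The plan is to route everything through Lemma~\ref{lem:Stone}, which applies because $\mathrm{DM(RS)}$ is assumed completely distributive and spatial. So (iii) holds if and only if, for every completely join-irreducible $j$ of $\mathrm{DM(RS)}$ and all $0<x,y\le j$, we have $x\wedge y\neq 0$. By spatiality it suffices to take $x,y$ themselves completely join-irreducible, since any nonzero element lies above some element of $\mathcal{J}$. I will also use the list of $\mathcal{J}$ given in \eqref{eq:Join1}--\eqref{eq:Join2}: the elements of $\mathcal{J}$ are $(\{p\}^{\Up\DOWN},\{p\}^{\Up\UP})$ with $\{p\}^\Up\in\mathcal{J}(\wp(U)^\Up)$, and $(\{x\}^\DOWN,\{x\}^\UP)=(\emptyset,\{x\}^\UP)$ for non-singletons $x$. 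Since $\mathrm{DM(RS)}$ is completely distributive and spatial, so is $\wp(U)^\UP$ (via \cite{JarRad25}), and we are in the setting of Proposition~\ref{prop:partition_of_J}.

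The first step is to show that a meet of two elements of $\mathrm{DM(RS)}$ is nonzero exactly when the meet of their second components in $\wp(U)^\UP$ is nonempty. By \eqref{Eq:Meet}, the second component of the meet is $(B_1\cap B_2)^{\Down\UP}$, and if it is empty then the first component $A_1\cap A_2\subseteq B_1\cap B_2$ has $(A_1\cap A_2)^{\Up}\subseteq (B_1\cap B_2)^{\Down}$ (argue as in Proposition~\ref{prop:pseudomplement}), hence the first component is empty as well. Next, $\{x\}^\UP\wedge\{y\}^\UP\neq\emptyset$ in the spatial lattice $\wp(U)^\UP$ is equivalent to the existence of $z$ with $\{z\}^\UP\subseteq\{x\}^\UP,\{y\}^\UP$. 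This holds because every element of $\wp(U)^\UP$ is a union of sets $\{z\}^\UP=\breve R(z)$, and conversely $\{z\}^\UP\neq\emptyset$ by reflexivity.

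For (i)$\Rightarrow$(ii), observe that $\{p\}^{\Up\UP}=\bigcup_{q\in R(p)}\{q\}^\UP$. If $\{p\}^\Up$ is completely join-prime and $\{x\}^\UP\subseteq\{p\}^{\Up\UP}$, I expect a join-primeness argument in $\wp(U)^\UP$ (taking $\{x\}^\UP$ join-irreducible, which spatiality lets us assume after shrinking) to give $\{x\}^\UP\subseteq\{q\}^\UP$ for some $q$. The delicate point is getting a single common $q$ for $x$ and $y$. I would try instead to show that $\{p\}^{\Up\UP}=\{q\}^\UP$ for any $q\in\mathfrak{core}R(p)$; this is nonempty by the analogue of \cite[Lemma~4.10]{JarRad25}. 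The argument: for $w\in R(p)$ we have $q\in R(p)$, and $R(p)\subseteq R(w')$ whenever $q\in R(w')$, which should give $\breve R(w)\subseteq\breve R(q)$ after unwinding. With this, (St2) is a special case of (St1).

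For (ii)$\Rightarrow$(iii), take $j\in\mathcal{J}$ and $k_1,k_2\in\mathcal{J}$ below $j$, with second components $\{x\}^\UP,\{y\}^\UP$ or of the form $\{q\}^{\Up\UP}$. In every case the second components contain some $\{x\}^\UP,\{y\}^\UP$ and are contained in the second component of $j$. That component is either $\{p\}^{\Up\UP}$ with $\{p\}^\Up$ completely join-irreducible, which is (St2) directly, or $\{x_0\}^\UP$, which I reduce to the first form since $\{x_0\}^\UP\subseteq\{x_0\}^{\Up\UP}$. Here I need $\{x_0\}^\Up\in\mathcal{J}(\wp(U)^\Up)$, or else replace $x_0$ by a core element. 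I expect this matching of the two kinds of join-irreducibles to be the main obstacle. Then (St2) gives $z$, and the first step gives $k_1\wedge k_2\neq0$.

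For (iii)$\Rightarrow$(i), Corollary~\ref{cor:Stone_nonempty} gives $\{x\}^\UP\wedge\{y\}^\UP\neq\emptyset$ whenever both lie in $\{p\}^\UP$, and spatiality of $\wp(U)^\UP$ yields the required $z$.
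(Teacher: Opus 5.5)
The gap is in (i)$\Rightarrow$(ii). The rest of your plan is essentially the paper's: reducing (iii) to Lemma~\ref{lem:Stone}(ii), noting that a meet in $\mathrm{DM(RS)}$ is nonzero once the meet of the second components is nonempty, and getting (iii)$\Rightarrow$(i) from Corollary~\ref{cor:Stone_nonempty}.

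\textbf{The identity you rely on is false.} For $q\in\mathfrak{core}R(p)$ it is not true that $\{p\}^{\Up\UP}=\{q\}^\UP$. Unwinding the definition of the core gives the opposite inclusion. If $q\in R(v)$, then $R(p)\subseteq R(v)$, so every $a\in R(p)$ lies in $R(v)$. That is, $\breve R(q)\subseteq\breve R(a)$ for all $a\in R(p)$. So $\{q\}^\UP$ is the \emph{least} of the sets $\{a\}^\UP$, $a\in R(p)$, whereas $\{p\}^{\Up\UP}=\bigcup_{a\in R(p)}\{a\}^\UP$ is their union.

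A concrete counterexample: take $U=\{1,2\}$, $R(1)=U$, $R(2)=\{2\}$. Then $\mathrm{DM(RS)}=\mathrm{RS}$ is a four-element chain, so the hypotheses hold. Also $\{1\}^\Up=U$ is completely join-irreducible in $\wp(U)^\Up$ and $\mathfrak{core}R(1)=\{1\}$. Yet $\{1\}^{\Up\UP}=U\neq\{1\}=\{1\}^\UP$. So (St2) is not obtained from (St1) this way.

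\textbf{The missing idea.} Use the core element as a common \emph{lower} bound and apply (St1) repeatedly. Your first instinct is the right start: by spatiality and complete join-primeness in $\wp(U)^\UP$, pick completely join-irreducible $\{u\}^\UP\subseteq\{x\}^\UP$ and $\{v\}^\UP\subseteq\{y\}^\UP$. Then pick $a_1,a_2\in R(p)$ with $\{u\}^\UP\subseteq\{a_1\}^\UP$ and $\{v\}^\UP\subseteq\{a_2\}^\UP$. For $w\in\mathfrak{core}R(p)$, the correct inclusion above gives $\{w\}^\UP\subseteq\{a_1\}^\UP,\{a_2\}^\UP$. Now apply (St1) three times:
\begin{enumerate}
\item to $u,w$ inside $\{a_1\}^\UP$, obtaining $z_1$;
\item to $v,w$ inside $\{a_2\}^\UP$, obtaining $z_2$;
\item to $z_1,z_2$ inside $\{w\}^\UP$, obtaining $z$ with $\{z\}^\UP\subseteq\{x\}^\UP,\{y\}^\UP$.
\end{enumerate}
This is exactly how the paper proves (i)$\Rightarrow$(ii).

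\textbf{A secondary open point in (ii)$\Rightarrow$(iii).} You leave the case $j=(\emptyset,\{x_0\}^\UP)$ unresolved, since $\{x_0\}^\Up$ need not be completely join-irreducible. It closes easily. Spatiality of $\mathrm{DM(RS)}$ gives spatiality of $\wp(U)^\Up$ (Proposition~4.4 of \cite{JarRad25}), so $U=\bigcup\mathcal{J}(\wp(U)^\Up)$. Hence $x_0\in\{q\}^\Up$ for some $\{q\}^\Up\in\mathcal{J}(\wp(U)^\Up)$, so $\{x_0\}^\UP\subseteq\{q\}^{\Up\UP}$ and (St2) applies. This is the paper's (ii)$\Rightarrow$(i) argument.

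Your core-element alternative also works. For $c\in\mathfrak{core}\breve R(x_0)$ one checks that $x_0\in\mathfrak{core}R(c)$. Hence $\{c\}^\Up$ is completely join-prime, and so completely join-irreducible, and it contains $x_0$.

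With these two repairs, your cycle (i)$\Rightarrow$(ii)$\Rightarrow$(iii)$\Rightarrow$(i) goes through.
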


\begin{proof}
$\mathrm{(i)}\Rightarrow\mathrm{(ii)}$:  
Assume that \eqref{eq:St1} holds and let $x,y,p\in U$
such that $\{x\}^\UP,\{y\}^\UP \subseteq \{p\}^{\Up \UP}$ and 
$\{p\}^\Up \in\mathcal{J}(\wp(U)^\Up)$. 
Because $\mathrm{DM(RS)}$ is spatial, $\wp(U)^\UP$ is also spatial by Proposition~{4.4} of \cite{JarRad25}.
Hence, $\{x\}^\UP$ and $\{y\}^\UP$ are joins of some completely 
join-irreducible elements of $\wp(U)^\UP$. It follows that
there exist $\{u\}^\UP$, $\{v\}^\UP \in\mathcal{J}(\wp(U)^\UP)$ 
such that  $\{u\}^\UP \subseteq \{x\}^\UP$ and
$\{v\}^\UP \subseteq \{y\}^\UP$. From  (GC4), it follows that
\[  \{u\}^\UP,  \{v\}^\UP \subseteq \{p\}^{\Up \UP} =  \bigcup \{ \{a\}^\UP \mid a \in \{p\}^\Up \} . \]
The fact that $\mathrm{DM(RS)}$ is completely distributive implies that
$\wp(U)^\UP$ is completely distributive. In a completely distributive 
lattice, all completely join-irreducible elements are completely
join-prime. Therefore, $\{u\}^\UP$ and  $\{v\}^\UP$ are completely
join-prime. It follows that there exist $a_1, a_2 \in \{p\}^\Up = R(p)$
such that $\{u\}^\UP \subseteq \{a_1\}^\UP$
and  $\{v\}^\UP \subseteq \{a_2\}^\UP$.

Since $\{p\}^\Up$ belongs to $\mathcal{J}(\wp(U)^\Up)$, 
$\mathfrak{core}R(p) \neq \emptyset$. Hence, there exists 
$w\in\mathfrak{core}R(p)$. By \cite[Lemma~{4.13}]{JarRad25}, 
$p \in \mathfrak{core} \breve{R}(w)$. 
Since $a_1,a_2 \in R(p)$, $p \in \breve{R}(a_1)$ and 
$p \in \breve{R}(a_2)$. By the definition of 
$\mathfrak{core}\breve{R}(w)$, we obtain 
$\{w\}^\UP = \breve{R}(w) \subseteq \breve{R}(a_1) = \{a_1\}^\UP$ 
and $\{w\}^\UP = \breve{R}(w) \subseteq \breve{R}(a_2) = \{a_2\}^\UP$. 
Since $\{u\}^\UP, \{w\}^\UP \subseteq\{a_1\}^\UP$,  there is $z_1$ 
such that $\{z_1\}^\UP \subseteq \{u\}^\UP, \{w\}^\UP$
by \eqref{eq:St1}. Analogously, since $\{v\}^\UP,\{w\}^\UP \subseteq \{a_2\}^\UP$, 
there is $z_2$ with  $\{z_2\}^\UP \subseteq \{v\}^\UP, \{w\}^\UP$.
Because $\{z_{1}\}^\UP,\{z_{2}\}^\UP\subseteq \{w\}^\UP$, 
we can apply \eqref{eq:St1} again and obtain that
there is $z$ such that $\{z\}^\UP \subseteq
\{z_1\}^\UP,\{z_2\}^\UP$. Then, 
$\{z\}^\UP \subseteq\{z_1\}^\UP \subseteq \{u\}^\UP \subseteq\{x\}^\UP$ and 
$\{z\}^\UP \subseteq \{z_2\}^\UP \subseteq \{v\}^\UP \subseteq\{y\}^\UP$, 
proving \eqref{eq:St2}. 

\smallskip\noindent%
$\mathrm{(ii)}\Rightarrow\mathrm{(i)}$:  
Assume that \eqref{eq:St2} holds, and let $x,y,p\in U$ be such that
$\{x\}^\UP, \{y\}^\UP \subseteq \{p\}^\UP$. 
Since $\mathrm{DM(RS)}$ is spatial by assumption, 
in light of Proposition~{4.4} of \cite{JarRad25}, $\wp(U)^\Up$ is also spatial. 
Hence $U\in\wp(U)^\Up$ is equal to the join of completely 
join-irreducible elements of $\wp(U)^\Up$, that is, 
$U= \bigcup \{\{q\}^\Up \mid \{q\}^\Up \in\mathcal{J}(\wp(U)^\Up)\}$. 
This implies $\{p\}\subseteq \{q\}^\Up$ for some
$\{q\}^\Up \in \mathcal{J}(\wp(U)^\Up)$. It follows that
$\{x\}^\UP, \{y\}^\UP \subseteq \{p\}^\UP \subseteq
\{q\}^{\Up \UP}$. By \eqref{eq:St2} there exists
an element $z\in U$ such that $\{z\}^\UP \subseteq
\{x\}^\UP, \{y\}^\UP$, which completes the proof. 

\smallskip\noindent%
$\mathrm{(i),(ii)}\Rightarrow\mathrm{(iii)}$: 
Suppose that \eqref{eq:St1} and \eqref{eq:St2} hold.
We will prove that in this case the condition in 
Lemma~\ref{lem:Stone}(ii) is satisfied
in $\mathrm{DM(RS)}$ for an arbitrary completely join-irreducible element of $\mathrm{DM(RS)}$. There are two kinds of completely 
join-irreducible elements in $\mathrm{DM(RS)}$:
\begin{enumerate}
\item[(a)] $(\{z\}^\DOWN,\{z\}^\UP)$, where
$\{z\}^\UP$ is completely join-irreducible in 
$\wp(U)^{\UP}$ and $z\notin\mathcal{S}$;

\item[(b)] $(\{z\}^{\Up\DOWN},\{z\}^{\Up\UP})$, 
where $\{z\}^{\Up}$ is completely join-irreducible in
$\wp(U)^{\Up}$.
\end{enumerate}

We first consider case (a). 
Let $\{p\}^{\UP}\in\mathcal{J}(\wp(U)^{\UP})$ be such that $p\notin\mathcal{S}$, and assume that
$(A,B),(C,D)\leq(\{p\}^\DOWN,\{p\}^{\UP})$ for some
$(A,B),(C,D)\in\mathrm{DM(RS)}$. Then $B = X^{\UP}$ and
$D = Y^{\UP}$ for some $X,Y\subseteq U$. Hence, for any $x\in X$ and
$y\in Y$ we get $\{x\}^{\UP}\subseteq X^{\UP}
=B\subseteq\{p\}^{\UP}$ and $\{y\}^{\UP}\subseteq
Y^{\UP}= D\subseteq\{p\}^{\UP}$. By condition \eqref{eq:St1},
there exists $z\in U$ such that $\{z\}^{\UP}\subseteq
\{x\}^{\UP} \cap \{y\}^{\UP}$. 
Since $z \in \{z\}^{\UP\Down}$, it follows that
$z \in \{z\}^{\UP\Down} \subseteq \{x\}^{\UP\Down} \subseteq B^\Down$. Similarly,
$z \in \{z\}^{\UP\Down} \subseteq \{y\}^{\UP\Down} \subseteq D^\Down$.
Hence, $z \in B^{\Down}\cap D^{\Down} = 
(B\cap D)^\Down \subseteq (B\cap D)^{\Down\UP}
= B\wedge D$. From this, we obtain  
$(A,B)\wedge (C,D) \neq (\emptyset,\emptyset)$. 
Thus, in this case, condition (ii) of
Lemma~\ref{lem:Stone} holds.

Next, we consider (b). Let $(\{p\}^{\Up\DOWN},\{p\}^{\Up\UP})$ be a 
completely join-irreducible element of $\mathrm{DM(RS)}$ such
that $\{p\}^\Up\in\mathcal{J}(\wp(U)^\Up)$. Assume that 
$(A,B),(C,D)\leq(\{p\}^{\Up\DOWN},\{p\}^{\Up\UP})$
for some $(A,B),(C,D)\in\mathrm{DM(RS)}$.
Then $B = X^{\UP}$ and $D = Y^{\UP}$ for some $X,Y\subseteq U$. 
Hence, for any $x\in X$ and $y\in Y$,  
$\{x\}^\UP \subseteq X^\UP = B\subseteq\{p\}^{\Up\UP}$ and
$\{y\}^\UP \subseteq Y^\UP = D \subseteq \{p\}^{\Up\UP}$. 
By condition \eqref{eq:St2}, there exists $z\in U$ such that
$\{z\}^\UP \subseteq \{x\}^\UP,\{y\}^\UP$. 
As in case (a), this implies that 
$z \in B\wedge D$. Therefore, 
$(A,B)\wedge (C,D) \neq (\emptyset,\emptyset)$ and Lemma~\ref{lem:Stone}(ii) holds. 
Thus, we can apply Lemma~\ref{lem:Stone} to $\mathrm{DM(RS)}$
to obtain that it is a Stone algebra.

\smallskip\noindent%
$\mathrm{(iii)}\Rightarrow\mathrm{(i)}$: 
Assume that $\mathrm{DM(RS)}$ is a Stone
algebra. We will prove that \eqref{eq:St1} holds. 

Let $x,y,p$ be such that  
$\{x\}^\UP, \{y\}^\UP \subseteq \{p\}^\UP$.
By Corollary~\ref{cor:Stone_nonempty}, 
$\{x\}^\UP \wedge \{y\}^\UP \neq \emptyset$.
Since $\{x\}^\UP\wedge\{y\}^\UP\in\wp(U)^\UP$, 
we have  $\{x\}^\UP\wedge\{y\}^{\UP}=Z^\UP$ for some 
$\emptyset \neq Z^\UP\in\wp(U)^\UP$. 
Therefore, there exists $z\in Z$ such that
$\{z\}^\UP\subseteq Z^\UP\subseteq \{x\}^\UP,\{y\}^\UP$. 
\end{proof}

\begin{remark} \label{rem:finite_stone}
If $\wp (U)^\UP$ is an atomic lattice,
then \eqref{eq:St1} is equivalent to: 
\begin{equation} \tag{$\text{St1}^\circ$} \label{eq:St1'}
\begin{minipage}[c]{0.9\textwidth}
For any $p \in U$, $\{p\}^\UP $ includes exactly one atom
of $\wp(U)^\UP$.
\end{minipage}
\end{equation}
Similarly, \eqref{eq:St2} is equivalent to:
\begin{equation} \tag{$\text{St2}^\circ$} \label{eq:St2'}
\begin{minipage}[c]{0.9\textwidth}
    For any $\{p\}^\Up \in \mathcal{J}(\wp(U)^\Up)$, $\{p\}^{\Up \UP}$ includes exactly one atom of $\wp(U)^\UP$.
\end{minipage}
\end{equation}
Indeed, suppose that $\wp(U)^\UP$ is atomic and that 
there are two atoms $\{x\}^\UP$ and $\{y\}^\UP$ below
$\{p\}^\UP$.
By \eqref{eq:St1}, there exists $\{z\}^\UP \subseteq
\{x\}^\UP, \{y\}^\UP$. Since $\wp(U)^\UP$ is atomic,
there is an atom $\{a\}^\UP$ included in $\{z\}^\UP$. 
This means that $\{a\}^\UP \subseteq \{x\}^\UP, \{y\}^\UP$.
Because these three are atoms, they must be equal. Thus,
$\{p\}^\UP$ contains at most one atom, and since $\wp(U)^\UP$ is 
atomic, it contains exactly one atom. Hence \eqref{eq:St1'} holds.

Conversely, if \eqref{eq:St1'} holds and $\{a\}^\UP$ is the only atom included in $\{p\}^\UP$, 
then this yields $\{a\}^\UP \subseteq \{x\}^\UP,\{y\}^\UP$ for any 
$\{x\}^\UP,\{y\}^\UP \subseteq  \{p\}^\UP$.
Thus, \eqref{eq:St1} and \eqref{eq:St1'} are equivalent.
The equivalence of \eqref{eq:St2} and \eqref{eq:St2'} can be 
shown analogously.

Therefore, in the case where $\mathrm{DM(RS)}$ is a completely 
distributive and spatial lattice and $\wp(U)^\UP$ is atomic, 
all four aforementioned conditions are equivalent according to
Theorem~\ref{thm:Stone}. This occurs, for instance, 
whenever $\mathrm{DM(RS)}$  is a finite distributive lattice: 
any finite lattice is atomic and spatial, and furthermore, any
finite distributive lattice is completely distributive. Thus, in
such a case, the fact that each $\{p\}^\UP$ contains exactly one
atom of $\wp(U)^\UP$ forces $\mathrm{DM(RS)}$ to be a Stone algebra.
\end{remark}

\begin{example}
Let $R$ be a reflexive relation on $U = \{1,2,3,4\}$ such that
\[ R(1) = \{1, 2, 3, 4\}, \quad
R(2) = \{2, 3\}, \quad
R(3) = \{2,3,4\}, \quad
R(4) = \{3,4\}.
\]
The relation is not symmetric because $(1,2) \in R$, but
$(2,1) \notin R$, for instance. 
The relation is also not transitive, because $(2,3) \in R$ and
$(3,4) \in R$, but $(2,4) \notin R$. The lattice $\wp(U)^\UP$
is depicted in Figure~\ref{fig:stonean} (left). 
Clearly, $\wp(U)^\UP$ is finite and distributive.
Since $\wp(U)^\UP$ has only one atom $\{1\}$, it follows that
$\mathrm{RS} = \mathrm{DM(RS)}$ is a Stone algebra by 
Remark~\ref{rem:finite_stone}. Its Hasse diagram is
in Figure~\ref{fig:stonean} (right). Its (completely)
join-irreducible elements are denoted by a filled circle.
\begin{figure}[htbp] 
\centering 
\includegraphics[width=0.6\textwidth]{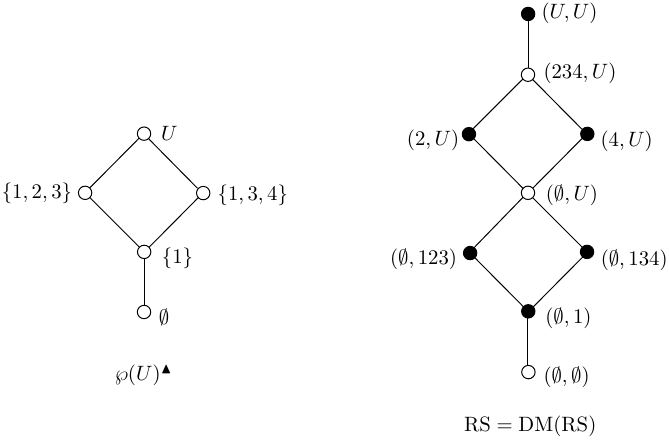} 
\caption{The Hasse diagrams of $\wp(U)^\UP$ and 
$\mathrm{RS} = \mathrm{DM(RS)}$.}
\label{fig:stonean}
\end{figure}

According to \eqref{eq:atoms},
the set of atoms of $\mathrm{RS}$ is
\[
\{ (\{x\}^\DOWN,\{x\}^\UP ) \mid 
\text{$\{x\}^\UP$ is an atom of $\wp(U)^\UP$} \}.
\]
Thus, $\mathrm{RS}$ also has only one atom which is
$(\{1\}^\DOWN, \{1\}^\UP) =  (\emptyset,\{1\})$.
This implies that for each element 
$(\emptyset,\emptyset) \neq \rho \in \mathrm{RS}$, we have
$\rho^* = (\emptyset,\emptyset)$ and $\rho^{**} = (U,U)$.
For these elements $\rho$, the Stone condition 
$\rho^* \vee \rho^{**} = (U,U)$ holds. 
Since $(\emptyset,\emptyset)^* = (U,U)$,  
$(\emptyset,\emptyset)$ also satisfies \eqref{eq:Stone_equation}.

As a pseudocomplemented Kleene algebra, $\mathrm{RS}$ is
not regular. This is evident because, as previously noted, all elements of $\mathrm{RS}$ other than $(\emptyset,\emptyset)$
share the same pseudocomplement $(\emptyset,\emptyset)$. 
Analogously, all elements different
from $(U,U)$ have the same dual pseudocomplement $(U,U)$. 
Therefore, the determination condition \eqref{eq:condM} 
does not hold. 
This is also clear by Theorem~\ref{thm:Stone_vs_Boolean},
as the set of completely join-irreducible elements of
$\mathrm{RS}$ has more than two levels.
\end{example}

We conclude this section by noting that in  \cite{JarRadVer09} 
we proved that for any quasiorder $R$, $\mathrm{RS}$ is a Stone lattice 
if and only if the relational product $\breve{R} \circ R$ is an equivalence. 
In fact,  it is not hard to verify that condition \eqref{eq:St1} is equivalent to this latter condition.

\section{Regular double Stone algebra}
\label{sec:regular_double_Stone}

A \emph{double Stone algebra} is a Stone algebra that is simultaneously a dual 
Stone algebra, that is, in addition to \eqref{eq:Stone_equation}, it satisfies the dual Stone identity:
\begin{equation} \label{eq:dual_Stone}
x^+ \wedge x^{++} = 0.
\end{equation}
A double Stone algebra is \emph{regular} if it satisfies condition (M).

To characterize the conditions under which $\mathrm{DM(RS)}$
forms a regular double Stone algebra, we consider a reflexive relation $R$ on $U$ satisfying:
\begin{equation} \tag{rSt} \label{eq:rSt}
\begin{minipage}[c]{0.9\textwidth}
 For any $x \in U$, $\{x\}^{\UP}$ is an atom in $\wp(U)^\UP$ and $\mathfrak{core}\breve{R}(x) \neq \emptyset$.
\end{minipage}
\end{equation}

\begin{lemma} \label{lem:rSt_implies}
Let $R$ be a reflexive relation $R$ on $U$.
If condition \eqref{eq:rSt} holds, then each
$\{x\}^\Up$ is a union of sets
$\{y\}^\Up$ such that $\mathfrak{core}R(y) \neq \emptyset$. 
\end{lemma}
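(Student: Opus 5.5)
The plan is to work directly with neighbourhoods, using $\{x\}^\Up = R(x)$ and $\{x\}^\UP = \breve{R}(x)$. The key consequence of \eqref{eq:rSt} is a rigidity property. Since $R$ is reflexive, every $\breve{R}(y)$ is nonempty, so it is a nonzero element of $\wp(U)^\UP$. Hence $\breve{R}(y) \subseteq \breve{R}(x)$ forces $\breve{R}(y) = \breve{R}(x)$, because $\breve{R}(x)$ is an atom. In particular, in the definition of $\mathfrak{core}\breve{R}(z)$, the implication ``$w \in \breve{R}(v) \implies \breve{R}(z) \subseteq \breve{R}(v)$'' becomes ``$w \in \breve{R}(v) \implies \breve{R}(z) = \breve{R}(v)$''.

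Fix $x \in U$ and $z \in R(x)$. The goal is an element $w$ with $z \in R(w) \subseteq R(x)$ and $\mathfrak{core}R(w) \neq \emptyset$. Then $R(x) = \bigcup_{z \in R(x)} R(w_z)$ is the required union: the inclusion $\supseteq$ holds because each $R(w_z) \subseteq R(x)$, and $\subseteq$ holds because $z \in R(w_z)$. By \eqref{eq:rSt}, $\mathfrak{core}\breve{R}(z) \neq \emptyset$, so choose $w \in \mathfrak{core}\breve{R}(z)$. Since $w \in \breve{R}(z)$, we have $z \in R(w)$.

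First I will show $z \in \mathfrak{core}R(w)$, which gives $\mathfrak{core}R(w) \neq \emptyset$; this is essentially the dual of \cite[Lemma~4.13]{JarRad25}, but I will verify it directly. We already know $z \in R(w)$. Suppose $z \in R(v)$, that is, $v \in \breve{R}(z)$; we must show $R(w) \subseteq R(v)$. Take $u \in R(w)$. Then $w \in \breve{R}(u)$, and since $w$ lies in the core of $\breve{R}(z)$, rigidity gives $\breve{R}(u) = \breve{R}(z) \ni v$. Hence $u \in R(v)$.

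Second I will show $R(w) \subseteq R(x)$, which is the same argument. For $u \in R(w)$ we again get $\breve{R}(u) = \breve{R}(z)$. Since $z \in R(x)$, we have $x \in \breve{R}(z) = \breve{R}(u)$, so $u \in R(x)$.

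The only real obstacle is choosing the right witness $w$. Taking $y = x$ or $y = z$ directly does not obviously have a nonempty core. The choice $w \in \mathfrak{core}\breve{R}(z)$ works because the atom condition upgrades inclusions between $\breve{R}$-neighbourhoods to equalities, and that single fact yields both needed properties of $w$ at once.
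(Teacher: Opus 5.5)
Your argument is correct, and it takes a different, more elementary route than the paper.

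\textbf{The paper's route.} The paper argues lattice-theoretically. By \cite[Lemma~4.10]{JarRad25}, nonemptiness of each $\mathfrak{core}\breve{R}(x)$ makes every $\{x\}^\UP$ completely join-prime. Hence every element of $\wp(U)^\UP$ is a join of completely join-prime elements. This property is carried over to $\wp(U)^\Up$ by the dual isomorphism between the two lattices. The dual form of Lemma~4.10 then identifies the completely join-prime elements of $\wp(U)^\Up$ as the sets $\{y\}^\Up$ with $\mathfrak{core}R(y) \neq \emptyset$.

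\textbf{Your route.} You instead build the decomposition $R(x) = \bigcup_{z \in R(x)} R(w_z)$ explicitly. The witness is $w_z \in \mathfrak{core}\breve{R}(z)$, and the key step is the transfer $w \in \mathfrak{core}\breve{R}(z) \Rightarrow z \in \mathfrak{core}R(w)$.

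\textbf{What each buys.} Your version is self-contained, avoids the external lemma and the duality argument, and names exactly which sets $\{y\}^\Up$ to use. The paper's version places the lemma inside the join-prime framework of \cite{JarRad25} that the subsequent theorem relies on.

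\textbf{A correction to your own diagnosis.} The atom half of \eqref{eq:rSt} plays no role in your proof.
\begin{itemize}
\item In both claims you only use $\breve{R}(z) \subseteq \breve{R}(u)$. This follows directly from $w \in \breve{R}(u)$ and $w \in \mathfrak{core}\breve{R}(z)$ by the definition of the core, so upgrading it to an equality adds nothing.
\item Your second claim is just the first with $v = x$: from $z \in R(x)$ and $z \in \mathfrak{core}R(w)$ you get $R(w) \subseteq R(x)$ immediately.
\end{itemize}
So your proof shows the conclusion holds whenever $\mathfrak{core}\breve{R}(z) \neq \emptyset$ for every $z \in U$. The paper's proof likewise uses only this core condition. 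You can delete the ``rigidity'' paragraph. You should also drop the closing claim that the atom condition is what makes the witness work, since it misidentifies the mechanism.
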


\begin{proof} Assume \eqref{eq:rSt} holds. By Lemma~4.10 of \cite{JarRad25}, 
$\{x\}^\UP$ is completely join-prime 
whenever $\mathfrak{core}\breve{R}(x)$ is nonempty. 
Because for any $x \in U$, $\{x\}^\UP$ is completely join-prime,
every element of $\wp(U)^\UP$ is a union of some completely join-prime elements.
As noted in \cite{JarRad25}, this means that $\wp(U)^\UP$ 
is isomorphic to a complete field of sets.
Because $\wp(U)^\Up$ is dually isomorphic to $\wp(U)^\UP$, also
$\wp(U)^\Up$ is isomorphic to some complete field of sets. 
Hence, each element of $\wp(U)^\Up$ is a join of some completely 
join-prime elements of it. 
In view of \cite[Lemma~4.10]{JarRad25}, this implies that any 
$\{x\}^\Up$ is the union of some sets $\{y\}^\Up$ with 
$\mathfrak{core}R(y) \neq \emptyset$. 
\end{proof}

\begin{example} \label{ex:disprove}
Let us consider Example~\ref{exa:pseudocomplements}. Now
\[ 
\mathfrak{core}R(1) = \{2\}, \quad
\mathfrak{core}R(2) = \emptyset, \quad
\mathfrak{core}R(3) = \{3\}, \quad
\mathfrak{core}R(4) = \{4\}.
\]
Each $\{x\}^\Up = R(x)$ is a union of sets $\{y\}^\Up$ such that
$\mathfrak{core}R(y) \neq \emptyset$. However, \eqref{eq:rSt} does not hold.
The lattice $\wp(U)^\UP$ is depicted in Figure~\ref{fig:pseudocomplements} (left).
The sets $\{1\}^\UP = \breve{R}(1) = \{1,2,4\}$ and  
$\{3\}^\UP = \breve{R}(3) = \{2,3,4\}$ are not atoms.

More generally, let $U$ be finite and $\wp(U)^\UP$ be distributive.
Then, also $\wp(U)^\Up$ is distributive. Due to finiteness,
$\wp(U)^\Up$ is spatial. By Corollary~{4.12} of \cite{JarRad25}, the set of
completely join-prime elements of $\wp(U)^\Up$ consists of elements
$\{y\}^\Up = R(y)$ such that $\mathfrak{core}R(y) \neq \emptyset$.
Since $\wp(U)^\Up$ is distributive, its sets of completely join-irreducible
and join-prime elements coincide. Therefore, each $\{x\}^\Up$ is a union of sets
$\{y\}^\Up$ such that $\mathfrak{core}R(y) \neq \emptyset$.
However, \eqref{eq:rSt} does not hold because, by our next theorem, this would imply
that $\mathrm{DM(RS)}$ is a regular double Stone algebra. However, this is
clearly not true; see Figure~\ref{fig:pseudocomplements} (right),
for example.
\end{example}

\begin{theorem}
\label{thm:regular_double_stone}
Let $R$ be a reflexive relation on $U$. Then 
$\mathrm{DM(RS)}$  is a regular double Stone algebra
defined on a completely distributive and spatial lattice 
if and only if \eqref{eq:rSt} holds.
\end{theorem}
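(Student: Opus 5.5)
The plan is to reduce both directions to Theorem~\ref{thm:Stone_vs_Boolean} and Theorem~\ref{thm:Stone}, together with the observation that a regular pseudocomplemented Kleene algebra which is a Stone algebra is automatically a double Stone algebra. The link is the Kleene rule \eqref{eq:KleeneRule}: since ${\sim}$ is an involutive dual automorphism with ${\sim}x^* = ({\sim}x)^+$, the dual Stone identity $x^+\wedge x^{++}=0$ is the ${\sim}$-image of the Stone identity applied to ${\sim}x$. Hence, once $\mathrm{DM(RS)}$ is completely distributive, it is a double Stone algebra if and only if it is a Stone algebra. Regularity in the sense of (M) is the same condition for double Stone algebras and for pseudocomplemented Kleene algebras. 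Therefore $\mathrm{DM(RS)}$ is a regular double Stone algebra on a completely distributive spatial lattice if and only if it is a spatial, completely distributive, regular pseudocomplemented Kleene algebra that is also Stone.

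For ($\Leftarrow$), assume \eqref{eq:rSt}. Each $\{x\}^\UP$ is an atom, and every element of $\wp(U)^\UP$ is the union of the $\{x\}^\UP$ it contains, so $\wp(U)^\UP$ is atomistic. Since $\mathfrak{core}\breve{R}(x)\neq\emptyset$, \cite[Lemma~4.10]{JarRad25} makes each atom $\{x\}^\UP$ completely join-prime. As in the proof of Lemma~\ref{lem:rSt_implies}, $\wp(U)^\UP$ is then isomorphic to a complete field of sets whose generators are atoms, so it is an atomic Boolean lattice. Theorem~\ref{thm:Stone_vs_Boolean} now shows that $\mathrm{DM(RS)}$ is spatial, completely distributive, and a regular pseudocomplemented Kleene algebra. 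For the Stone property we use Remark~\ref{rem:finite_stone}: $\wp(U)^\UP$ is atomic and each $\{p\}^\UP$ is itself an atom, so it includes exactly one atom. This is \eqref{eq:St1'}, hence \eqref{eq:St1}, and Theorem~\ref{thm:Stone} gives the Stone property. The dual Stone identity follows by the ${\sim}$-argument above.

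For ($\Rightarrow$), assume $\mathrm{DM(RS)}$ is a regular double Stone algebra on a completely distributive spatial lattice. By Corollary~\ref{cor:Kleene_pseudocompl} it is a pseudocomplemented Kleene algebra satisfying (M). Theorem~\ref{thm:Stone_vs_Boolean} then makes $\wp(U)^\UP$ an atomic Boolean lattice, and Theorem~\ref{thm:Stone} gives \eqref{eq:St1}. Now fix $x\in U$. The set $\{x\}^\UP$ is nonempty because $x\in\{x\}^\UP$, so by atomicity it contains an atom $a$. Suppose $\{x\}^\UP$ also contained another atom $b\neq a$. Since $\{x\}^\UP$ is a union of sets $\{y\}^\UP$, we can pick $y,y'$ with $\{y\}^\UP\supseteq a$ and $\{y'\}^\UP\supseteq b$, choosing them so that $\{y\}^\UP\not\supseteq b$. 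Applying \eqref{eq:St1} inside $\{x\}^\UP$ yields some $\{z\}^\UP$ lying below both $\{y\}^\UP$ and $\{y'\}^\UP$.

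The main obstacle is turning this into a genuine contradiction. The cleaner route is Remark~\ref{rem:finite_stone}, which already gives \eqref{eq:St1'} directly: $\{x\}^\UP$ contains exactly one atom $a$. In a Boolean lattice, however, every element is the join of the atoms below it, so $\{x\}^\UP=a$ and $\{x\}^\UP$ is an atom. Since atoms of a completely distributive lattice are completely join-irreducible, hence completely join-prime, \cite[Lemma~4.10]{JarRad25} gives $\mathfrak{core}\breve{R}(x)\neq\emptyset$. This proves \eqref{eq:rSt}. The one point I would check carefully is that Theorem~\ref{thm:Stone_vs_Boolean}(iii) indeed yields atomisticity, that is, that an atomic Boolean lattice which is completely distributive is atomistic. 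This is standard, since a complete atomic Boolean algebra is isomorphic to a powerset.
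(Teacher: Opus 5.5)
Your proof is correct and follows essentially the paper's route. For ($\Rightarrow$) you use Theorem~\ref{thm:Stone_vs_Boolean} to make $\wp(U)^\UP$ atomic Boolean, Theorem~\ref{thm:Stone} to get \eqref{eq:St1}, "exactly one atom" plus atomisticity, and \cite[Lemma~4.10]{JarRad25}; for ($\Leftarrow$) you use Theorem~\ref{thm:Stone_vs_Boolean}, \eqref{eq:St1} with Theorem~\ref{thm:Stone}, and ${\sim}$-duality. The only deviations are minor: in ($\Leftarrow$) you get complete distributivity and spatiality of $\mathrm{DM(RS)}$ from Theorem~\ref{thm:Stone_vs_Boolean}~(iii)$\Rightarrow$(i) rather than from Lemma~\ref{lem:rSt_implies} and \cite[Corollary~4.12]{JarRad25}, and your abandoned $y,y'$ detour in ($\Rightarrow$) is unnecessary because every atom is itself some $\{y\}^\UP$.
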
 

\begin{proof} Assume that $\mathrm{DM(RS)}$ forms a regular double Stone 
algebra on a completely distributive spatial lattice. 
By Theorem~\ref{thm:Stone_vs_Boolean}, $\wp(U)^\UP$ is a Boolean lattice.
Because $\mathrm{DM(RS)}$ is completely distributive lattice, 
$\wp(U)^\UP$ is a completely distributive Boolean lattice.
This means that $\wp(U)^\UP$ is atomistic and 
its completely join-irreducible elements coincide with its atoms.
Therefore, each atom of $\wp(U)^\UP$ has the form
$\{a\}^{\UP}$ for some $a\in U$. 
Let $x \in U$.  Next we show that $\{x\}^\UP$ is an atom.
It is now clear that $\{x\}^{\UP}$ is the join of some atoms of 
$\wp(U)^\UP$ included in it. Clearly, at least one such an atom
must exist. Now let $\{a\}^{\UP}$ and  $\{b\}^{\UP}$ 
be two (not necessarily distinct) atoms of $\wp(U)^\UP$ with 
$\{a\}^{\UP}, \{b\}^{\UP} \subseteq \{x\}^{\UP}$. In view of
condition \eqref{eq:St1}, there exists an element 
$c\in U$ with  $\{c\}^{\UP}  \subseteq\{a\}^\UP,\{b\}^{\UP}$. Since
$\{a\}^{\UP}$ and $\{b\}^{\UP}$ are atoms, we get
$\{a\}^{\UP} = \{b\}^{\UP} = \{c\}^{\UP}$. This
means that $\{x\}^{\UP}$ contains only one atom
$\{a\}^{\UP}$ of $\wp(U)^\UP$. 
Because $\wp(U)^\UP$ is an atomistic lattice, this means
that $\{x\}^{\UP}$ itself must equal $\{a\}^{\UP}$. 
Furthermore, as any $\{x\}^{\UP}$ is an atom, it is join-irreducible.
Because $\wp(U)^\UP$ is completely distributive, $\{x\}^\UP$ is completely
join-prime. Hence, by Lemma~4.10 of \cite{JarRad25},
$\mathfrak{core}\breve{R}(x)\neq\emptyset$. Thus, 
\eqref{eq:rSt} holds. 

Conversely, assume that \eqref{eq:rSt} is satisfied. 
Then, $\{x\}^{\UP} = \breve{R}(x)$ is join-irreducible, 
and any $R(x)=\{x\}^{\Up}$ is
the union of some sets 
$\{p\}^{\Up}$ with $\mathfrak{core} R(p) \neq \emptyset$
by Lemma~\ref{lem:rSt_implies}.
Hence in view of \cite[Corollary 4.12]{JarRad25}, 
$\mathrm{DM(RS)}$ is a completely
distributive spatial lattice. Therefore, $\wp(U)^\UP$ is also spatial and
completely distributive. By \eqref{eq:rSt}, all completely 
join-irreducible elements of $\wp(U)^\UP$ are atoms, 
and hence $\wp(U)^\UP$ is a Boolean lattice. 
Therefore, using Theorem~\ref{thm:Stone_vs_Boolean} we have that $\mathrm{DM(RS)}$ is a 
regular double pseudocomplemented lattice.
Observe that condition \eqref{eq:St1} is also satisfied. 
Indeed, if $\{x\}^\UP, \{y\}^\UP \subseteq \{p\}^\UP$, the 
fact that all these are atoms implies that they are 
all equal. In view of Theorem~\ref{thm:Stone}, 
$\mathrm{DM(RS)}$ is a Stone
algebra. Because $\mathrm{DM(RS)}$ is a self-dual lattice, 
it is a double Stone algebra. Consequently, 
$\mathrm{DM(RS)}$ forms a regular double Stone algebra.
\end{proof}

It is known from the literature \cite{Comer93} that rough sets defined by an equivalence relation form a regular
double Stone algebra. Given an equivalence $E$ on $U$, the set $\wp(U)^\UP = \wp(U)^\DOWN$ forms an atomistic Boolean lattice in which each equivalence class $E(x) = \{x\}^\UP$ serves as an atom. Furthermore, the core $\mathfrak{core}\breve{E}(x) = \mathfrak{core}E(x) = E(x)$ is trivially nonempty for all $x \in U$, showing that condition \eqref{eq:rSt} holds.

\begin{example}\label{exa:demoing_rSt}
Let $R$ be a relation on $U = \{1,2,3,4\}$ such that
\[  R(1) = \{1,2,3,4\}, \ R(2) = \{1,2\}, \ R(3) = \{3\}, \ R(4) = \{4\}.\]
We have that $\wp(U)^\UP$ is a Boolean lattice consisting
of the sets $\emptyset$, $\{1,2\}$, $\{1,3\}$, $\{1,4\}$,
$\{1,2,3\}$, $\{1,2,4\}$, and $\{1,3,4\}$. Now
\[\{1\}^\UP =  
\{2\}^\UP = \{1,2\}, \ 
\{3\}^\UP = \{1,3\}, \ 
\{4\}^\UP = \{1,4\}.
\]
are the atoms of $\wp(U)^\UP$. Moreover, the cores
\[ 
\mathfrak{core}\breve{R}(1) = 
\mathfrak{core}\breve{R}(2) = \{2\}, \quad
\mathfrak{core}\breve{R}(3) = \{3\}, \quad
\mathfrak{core}\breve{R}(1) = \{4\}
\]
are all nonempty. Condition \eqref{eq:rSt} holds and by
Theorem~\ref{thm:regular_double_stone},
$\mathrm{DM(RS)}$ is a regular double Stone algebra.

The lattice
$\mathrm{DM(RS)}$ is depicted in Figure~\ref{fig:double_Stone}. 
\begin{figure}[htbp] 
\centering 
\includegraphics[width=60mm]{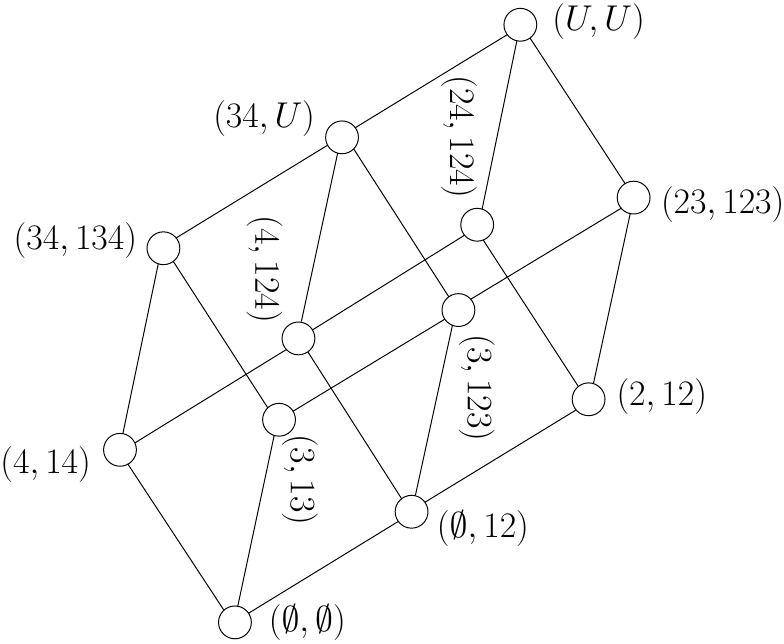} 
\caption{The Hasse diagram of $\mathrm{RS}$.} 
\label{fig:double_Stone} 
\end{figure}
The lattice $\mathrm{DM(RS)}$ is equal to $\mathrm{RS}$ and is
isomorphic to the product $
\mathbf{2} \times\mathbf{2} \times \mathbf{3}$, where 
$\mathbf{2}$ and $\mathbf{3}$ are chains of 
two and three elements, respectively.  
\end{example}

\begin{proposition} \label{prop:irredundant_cover}
Let $R$ be a reflexive relation on $U$. Then the following are equivalent:
\begin{enumerate}[label={\rm (\roman*)}]
\item $\{ \{x\}^\UP \mid x \in U\}$ is an irredundant covering of $U$.

\item Condition \eqref{eq:rSt} holds.
\end{enumerate}
\end{proposition}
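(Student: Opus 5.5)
The plan is to unpack both conditions elementwise, using that $\{x\}^\UP = \breve{R}(x)$. First, the family $\{\{x\}^\UP \mid x \in U\}$ is always a covering: by reflexivity, $x \in \breve{R}(x)$ and $\breve{R}(x) \neq \emptyset$. Since the family is a set of subsets, repeated neighbourhoods count once.

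So irredundancy is equivalent to the following. For every $x \in U$ there is a witness $w \in \breve{R}(x)$ such that $w \in \breve{R}(y)$ implies $\breve{R}(y) = \breve{R}(x)$. Indeed, removing the member $\breve{R}(x)$ must leave some point uncovered, and that point is such a $w$. The proof then consists of relating this witness condition to the core and atom conditions in \eqref{eq:rSt}.

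$\mathrm{(i)}\Rightarrow\mathrm{(ii)}$: Fix $x$ and take its witness $w$. If $w \in \breve{R}(y)$, then $\breve{R}(y) = \breve{R}(x)$, so in particular $\breve{R}(x) \subseteq \breve{R}(y)$. Hence $w \in \mathfrak{core}\breve{R}(x)$, and the core is nonempty.

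For the atom property, let $\emptyset \neq Y^\UP \subseteq \breve{R}(x)$. By (GC4), $Y^\UP = \bigcup\{\breve{R}(y) \mid y \in Y\}$, and $Y \neq \emptyset$ because $\emptyset^\UP = \emptyset$. Pick any $y \in Y$, so $\breve{R}(y) \subseteq \breve{R}(x)$. Apply irredundancy to $y$: its witness $w'$ lies in $\breve{R}(y) \subseteq \breve{R}(x)$, which forces $\breve{R}(x) = \breve{R}(y) \subseteq Y^\UP$. Hence $Y^\UP = \breve{R}(x)$, so $\{x\}^\UP$ is an atom of $\wp(U)^\UP$.

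$\mathrm{(ii)}\Rightarrow\mathrm{(i)}$: Fix $x$ and choose $w \in \mathfrak{core}\breve{R}(x)$. Suppose $w \in \breve{R}(y)$. By the definition of the core, $\breve{R}(x) \subseteq \breve{R}(y)$. Now $\breve{R}(y) = \{y\}^\UP$ is an atom of $\wp(U)^\UP$, and $\breve{R}(x)$ is a nonempty element of $\wp(U)^\UP$ below it (it contains $x$). Therefore $\breve{R}(x) = \breve{R}(y)$. So $w$ is covered only by the member $\breve{R}(x)$, and removing that member uncovers $w$. This gives irredundancy.

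I expect no serious obstacle. The only delicate point is stating irredundancy correctly when distinct $x$ can have equal neighbourhoods, i.e. working with the set of distinct members. The atom argument in $\mathrm{(i)}\Rightarrow\mathrm{(ii)}$ must apply irredundancy to $y$ rather than $x$, which is the step to get right.
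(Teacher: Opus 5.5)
Your proof is correct and follows essentially the same route as the paper. In both, an uncovered point of $\{x\}^\UP$ is exactly a core element, and irredundancy rules out proper inclusions among the sets $\{y\}^\UP$. The differences are minor: you prove $\mathrm{(ii)}\Rightarrow\mathrm{(i)}$ directly rather than by contradiction, and you justify the no-proper-inclusion step via the witness for $y$, which the paper only asserts.
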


\begin{proof}
$\mathrm{(i)}\Rightarrow\mathrm{(ii)}$: Let $x \in U$.
As $\{ \{x\}^\UP \mid x \in U\}$ forms an irredundant 
covering of $U$, the inclusion 
$\{y\}^\UP \subseteq\{x\}^\UP$ implies 
$\{y\}^\UP = \{x\}^\UP$  for any $y \in U$.
Let $\emptyset \neq Y^\UP \subseteq \{x\}^\UP$. Then $Y$ is nonempty
and it contains at least one element $y$.
Now $\{y\}^\UP \subseteq Y^\UP \subseteq \{x\}^\UP$ and 
$\{y\}^\UP = \{x\}^\UP$ imply $Y^\UP = \{x\}^\UP$. This means that $\{x\}^\UP$ is an atom of $\wp(U)^\UP$.

Next we prove that 
$\mathfrak{core}\breve{R}(x) \neq \emptyset$.
Because the covering 
$\{ \{a\}^\UP \mid a \in U\}$ is irredundant,
there exists an element $y$ in the difference
\[
\{x\}^\UP \setminus   
\bigcup \{\{b\}^\UP \mid \{b\}^\UP \neq\{x\}^\UP\}.
\]
Suppose that $y \in \breve{R}(c) = \{c\}^\UP$
for some $c \in U$. We must have 
$\breve{R}(c) = \{c\}^\UP = \{x\}^\UP = \breve{R}(x)$.
Thus, $\breve{R}(x) \subseteq \breve{R}(c)$ yields 
$y \in \mathfrak{core}\breve{R}(x)$.

\medskip\noindent%
$\mathrm{(ii)}\Rightarrow\mathrm{(i)}$: 
Suppose that \eqref{eq:rSt} holds and 
that the covering $\{ \{a\}^\UP \mid a \in U\}$ is 
not irredundant. This means that there exists $x \in U$
such that 
$\{x\}^\UP \subseteq \bigcup\{\{y\}^\UP \mid \{y\}^\UP \neq\{x\}^\UP\}$. 
Now $\emptyset\neq\mathfrak{core}\breve{R}(x) \subseteq\breve{R}(x)
= \{x\}^\UP$. Hence, for any $w \in \mathfrak{core}\breve{R}(x)$,
there is $y\in U$ with $\{y\}^\UP \neq \{x\}^\UP$
and $w\in \{y\}^\UP = \breve{R}(y)$. Then $\{x\}^\UP = \breve{R}(x) 
\subseteq \breve{R}(y) = \{y\}^\UP$. Since
$\{y\}^\UP$ is an atom in $\wp(U)^\UP$, we get
$\{y\}^\UP = \{x\}^\UP$, a contradiction. Thus,
we have an irredundant covering.
\end{proof}

\begin{definition} \label{def:clinker}
Let $R$ be a reflexive relation. We say that
$R$ is a \emph{clinker equivalence} if 
$\{ \{x\}^\UP \mid x \in U\}$ is an irredundant covering of $U$.
\end{definition}

\begin{remark}
The term clinker equivalence was chosen because the overlapping sets $\{x\}^\UP$ resemble the overlapping wooden strakes of a Viking ship's hull, referencing the historical clinker-built method of Norse shipbuilding.
\end{remark}

\begin{example}\label{exa:clinker_demo}
In this example, we informally introduce the characteristic properties of a clinker equivalence, which can be viewed as a specific extension of an equivalence relation.
\begin{figure}[htbp] 
\centering 
\includegraphics[width=80mm]{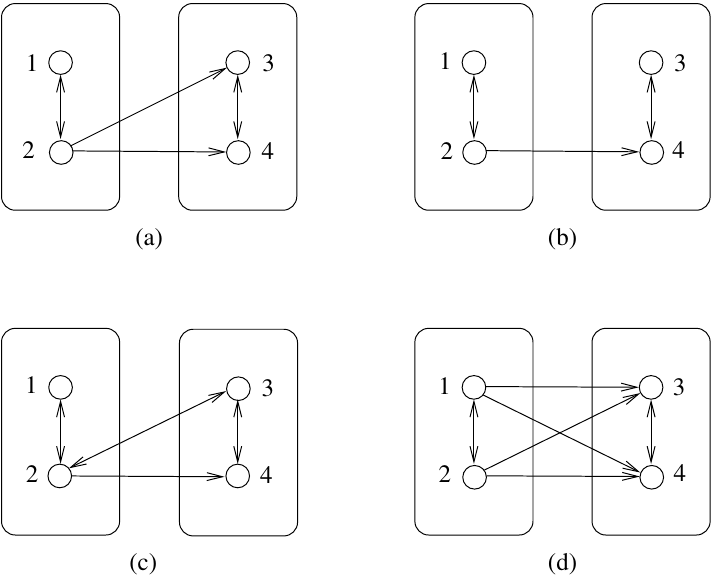} 
\caption{Examples and non-examples of clinker equivalences.} 
\label{fig:clinkers} 
\end{figure}

Let $U = \{1,2,3,4\}$. Consider the relation in case (a) of Figure~\ref{fig:clinkers}. The classes of the underlying equivalence are $\{1,2\}$ and $\{3,4\}$. There is an arrow from $2$ to the other class. If there is an arrow from an element in class $A$ to an element in another class $B$, then the arrow must go to all elements of $B$. In addition, if there is an arrow from class $A$ to some class $B$, there cannot be an arrow from class $B$ back to class $A$. Furthermore, there must be at least one element in each class which is not related to any element outside its class. In this case, $\{1\}^\UP = \{2\}^\UP = \{1,2\}$ and $\{3\}^\UP = \{4\}^\UP = \{2,3,4\}$ form an irredundant covering.

In case (b), there is an arrow from $2$ to $4$, but there should also be an arrow from $2$ to $3$. This relation is not a clinker equivalence, because $\{1\}^\UP = \{2\}^\UP = \{1,2\}$, $\{3\}^\UP = \{3,4\}$, and $\{4\}^\UP = \{2,3,4\}$ do not form an irredundant covering.

The relation in (c) is otherwise similar to (a), but now there is an arrow back from $3$ to $2$. This relation is not a clinker equivalence, because $\{1\}^\UP = \{1,2\}$, $\{2\}^\UP = \{1,2,3\}$, and $\{3\}^\UP = \{4\}^\UP = \{2, 3,4\}$ do not form an irredundant covering.

Finally, (d) is not a clinker equivalence, because all elements in the class $\{1,2\}$ are related outside their class. Now $\{1\}^\UP = \{2\}^\UP = \{1,2\}$ and $\{3\}^\UP = \{4\}^\UP = \{1, 2,3,4\}$ do not form an irredundant covering.

In this example, there are only two underlying equivalence classes. Naturally, there can be more than two classes. An in-depth characterisation of clinker equivalences will be presented in a paper currently in preparation.
\end{example}

By Proposition~\ref{prop:irredundant_cover}, $R$
is a clinker equivalence if and only if condition~\eqref{eq:rSt}
holds. Therefore, by applying Theorem~\ref{thm:regular_double_stone},
we obtain our following proposition.

\begin{proposition} \label{cor:clinker_characterization}
Let $R$ be a reflexive relation on $U$. Then
$\mathrm{DM(RS)}$ forms a regular double Stone algebra 
defined on a completely
distributive spatial lattice if and only if $R$ is a clinker
equivalence.
\end{proposition}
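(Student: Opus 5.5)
This is a direct combination of two earlier results, so the plan is simply to chain them.

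First, by Definition~\ref{def:clinker}, $R$ is a clinker equivalence exactly when $\{\{x\}^\UP \mid x \in U\}$ is an irredundant covering of $U$. Proposition~\ref{prop:irredundant_cover} states that this holds if and only if condition \eqref{eq:rSt} holds. So, for a reflexive $R$, being a clinker equivalence is equivalent to \eqref{eq:rSt}. No additional hypothesis is needed here, since reflexivity alone guarantees $x \in \{x\}^\UP$, and hence that the family is a covering.

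Second, Theorem~\ref{thm:regular_double_stone} states that $\mathrm{DM(RS)}$ is a regular double Stone algebra on a completely distributive and spatial lattice if and only if \eqref{eq:rSt} holds. Composing the two equivalences through the common middle condition \eqref{eq:rSt} gives both directions of the statement at once.

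There is essentially no obstacle. The only point to check is that the wording ``regular double Stone algebra defined on a completely distributive spatial lattice'' in the proposition matches the formulation of Theorem~\ref{thm:regular_double_stone} exactly, so that the theorem can be quoted verbatim. The substantive work was already done in those two results: the Boolean/atomistic analysis via Theorems~\ref{thm:Stone_vs_Boolean} and~\ref{thm:Stone}, and the core argument in Proposition~\ref{prop:irredundant_cover}.
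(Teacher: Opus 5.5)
Your proposal is correct and is exactly the paper's argument: it chains Proposition~\ref{prop:irredundant_cover} (clinker equivalence $\iff$ \eqref{eq:rSt}) with Theorem~\ref{thm:regular_double_stone} (\eqref{eq:rSt} $\iff$ $\mathrm{DM(RS)}$ is a regular double Stone algebra on a completely distributive spatial lattice). Your side remark that reflexivity makes $\{\{x\}^\UP \mid x \in U\}$ a covering is true but not needed for the chaining.
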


Our next example demonstrates the use of this proposition.

\begin{example}\label{exa:regular_double_stone}
Let us consider the relation $R$ of 
Example~\ref{exa:demoing_rSt}. Now,
\[\{1\}^\UP = \{2\}^\UP = \{1,2\}, \ 
\{3\}^\UP = \{1,3\}, \ 
\{4\}^\UP = \{1,4\}
\]
forms an irredundant covering of $U$. 
This means that $R$ is a clinker equivalence and
$\mathrm{DM(RS)}$ forms a regular double Stone algebra,
as we already noted in Example~\ref{exa:demoing_rSt}.
\end{example}

\begin{remark}
Proposition~\ref{prop:irredundant_cover} gives a simple method to 
generate regular double Stone algebras in terms of reflexive relations.
We first define an irredundant covering $\mathcal{C}$ of $U$. Then, we
attach to each element $x \in U$ a set of $B$ of $\mathcal{C}$ 
such that $x \in B$. Also, each set in $\mathcal{C}$ needs to be
attached to at least one element of $U$.
These sets will be the neighbourhoods of the inverse
relation of $R$, that is, $\breve{R}(x) = \{x\}^\UP$ for any $x \in U$.
Finally, construct the relation $R$ from $\breve{R}$.
\end{remark}

\section*{Concluding remarks and future work}

In this paper, we considered various algebras that can be defined on 
the completion $\mathrm{DM(RS)}$ of rough sets determined by a 
reflexive relation. In our previous work, we investigated a new way to obtain a Nelson algebra. Here, we have presented similar results for regular pseudocomplemented Kleene algebras and Stone algebras. The case of regular double Stone algebras is of special interest because it led us to introduce \emph{clinker equivalences}, which can be seen as a novel generalisation of equivalence relations. These reflexive relations need not be symmetric or transitive, but they define an irredundant covering of the underlying set that generalises partitions induced by equivalences. Our forthcoming paper will be devoted to an in-depth study and construction of clinker equivalences. In particular, we aim to deduce equivalent characterisations for clinker equivalences, clarify their relationship with usual equivalences, and demonstrate their applications in concept approximations, rough set constructions, and classification problems. Another interesting direction will be their generation by information systems and the identification of constructions that preserve them. We believe that these relations have numerous applications in theoretical computer science and clustering methods used in engineering sciences.

\section*{Acknowledgements}
We thank the reviewers for their careful reading and constructive feedback, which have significantly improved the clarity, structure, and overall quality of our manuscript.

\printbibliography

@book{Blyth2005,
  title={Lattices and Ordered Algebraic Structures},
  author={Blyth, T.~S.},
  year={2005},
  publisher={Springer, London},
  doi={10.1007/b139095}
}

@ARTICLE{GeWa92, 
   author = "Mai Gehrke and Elbert Walker",
   title =  "On the structure of rough sets", 
   journal =  "Bulletin of Polish Academy of Sciences, Mathematics", 
   volume =  "40", 
   pages =  "235-245", 
   year =  "1992"
}

@article{Jarvinen01,
author="J{\"a}rvinen, Jouni",
title="Approximations and rough sets based on tolerances",
journal = "Lecture Notes in Computer Science",
year="2001",
volume = 2005,
pages="182--189",
doi = "10.1007/3-540-45554-X_21"
}

@article{Jarv04,
author="J{\"a}rvinen, Jouni",
title="The ordered set of rough sets",
year="2004",
journal = "Lecture Notes in Computer Science",
volume = "3066",
pages="49--58",
doi = {10.1007/978-3-540-25929-9_5}
}

@article{Jarvinen2007,
    author={J\"arvinen, Jouni}, 
    title={Lattice theory for rough sets},
    journal = {Transaction on Rough Sets},
    pages={400-498},
    volume={VI},
    year={2007},  
    doi = {10.1007/978-3-540-71200-8_22}
}

@article{JarRadVer09,
  author    = {Jouni J{\"{a}}rvinen and S{\'{a}}ndor Radeleczki and Laura Veres},
  title     = {Rough sets determined by quasiorders},
  journal   = {Order},
  volume    = {26},
  pages     = {337--355},
  year      = {2009},
  doi       = {10.1007/s11083-009-9130-z}
}

@article{JarRad11,
    author={J{\"a}rvinen, Jouni and Radeleczki, S{\'a}ndor},
	Journal = {Algebra Universalis},
	Pages = {163--179},
	Title = {Representation of {N}elson algebras by rough sets determined by quasiorders},
	Volume = {66},
	Year = {2011},
	doi={10.1007/s00012-011-0149-9}}

@article{JarRad18,
  title={Representing regular pseudocomplemented {K}leene algebras by tolerance-based rough sets},
  author={J{\"a}rvinen, Jouni and Radeleczki, S{\'a}ndor},
  journal={Journal of the Australian Mathematical Society},
  volume={105},
  doi = {10.1017/S1446788717000283},
  pages={57--78},
  year={2018},
  publisher={Cambridge University Press}
}

@article{JarRad23,
author={J{\"a}rvinen, Jouni and Radeleczki, S{\'a}ndor},
title = {Pseudo-{K}leene algebras determined by rough sets},
journal = {International Journal of Approximate Reasoning},
volume = {161},
pages = {article ID 108991},
year = {2023},
doi = {10.1016/j.ijar.2023.108991},
}

@article{JarRad25,
author={J{\"a}rvinen, Jouni and Radeleczki, S{\'a}ndor},
title = {The structure of rough sets defined by reflexive relations},
journal = {International Journal of Approximate Reasoning},
volume = {185},
pages = {article ID 109471},
year = {2025},
doi = {10.1016/j.ijar.2025.109471}
}

@article{JarRad14,
author={J{\"a}rvinen, Jouni and Radeleczki, S{\'a}ndor},
title = {Rough sets determined by tolerances},
journal = {International Journal of Approximate Reasoning},
volume = {55},
pages = {1419-1438},
year = 2014,
doi = {10.1016/j.ijar.2013.12.005},
}

@article{Sankappanavar86,
author = {Sankappanavar, H. P.},
title = {Pseudocomplemented {O}ckham and {D}e~{M}organ Algebras},
journal = {Mathematical Logic Quarterly},
volume = {32},
pages = {385--394},
year = {1986},
doi = {10.1002/malq.19860322502}
}

@article{Umadevi2015,
  title={On the completion of rough sets system determined by arbitrary binary relations},
  author={Umadevi, Dwaraganathan},
  journal={Fundamenta Informaticae},
  volume={137},
  pages={413-424},
  year={2015},
  doi = {10.3233/FI-2015-1188}
}

@article{Var72,
	Author = {Jules Varlet},
	Journal = {Algebra Universalis},
    Volume = 2,
	Pages = {218--223},
	Title = {A regular variety of type 
    \textlangle 2, 2, 1, 1, 0, 0 \textrangle},
	Year = {1972},
    doi = {10.1007/BF02945029}
}

@book{Davey02,
	Author = {B.A. Davey and H.A. Priestley},
	Edition = {2nd edition},
	Publisher = {Cambridge University Press},
	Title = {Introduction to {L}attices and {O}rder},
	Year = {2002},
	doi = {10.1017/CBO9780511809088}
	}

@article{Erne_primer,
author = {Ern{\'e}, M. and Koslowski, J. and Melton, A. and 
          Stecker, G.~E.},
title = {A Primer on {G}alois connections},
journal = {Annals of the New York Academy of Sciences},
volume = {704},
pages = {103-125},
doi = {10.1111/j.1749-6632.1993.tb52513.x},
year = {1993}
}

@article{Balachandran55,
    author = {V.~K.~Balachandran},
    title = {On complete lattices and a problem of {B}irkhoff and {F}rink},
    journal = {Proceedings of the American Mathematical Society},
    year = 1955,
    volume=6,
    pages={548-553},
    doi = {10.2307/2033427}
}

@incollection{Comer93,
	Author = {S.~D. Comer},
	Booktitle = {Algebraic {M}ethods in {L}ogic and {C}omputer {S}cience},
	Editor = {C. Rauszer},
	Pages = {117--124},
	Publisher = {Institute of Mathematics, Polish Academy of Sciences},
	Series = {Banach Centre Publications},
	Title = {On connections between information systems, rough sets, and algebraic logic},
	Volume = {28},
	doi =  "10.4064/-28-1-117-124", 
	Year = 1993}

@article{Pawl82,
	Author = {Zdzis{\l}aw Pawlak },
	Journal = {International Journal of Computer \& 
    Information Sciences},
	Pages = {341--356},
	Title = {Rough sets},
	Volume = {11},
	Year = {1982},
    doi = {10.1007/BF01001956 }
}

@Article{PomPom88,
	author = "Jacek Pomyka{\l}a and Janusz A. Pomyka{\l}a",
	title = "The {S}tone algebra of rough sets",
	journal = "Bulletin of Polish Academy of Sciences. Mathematics",
	volume = 36,
	pages = "495-512",
	year = 1988
}

@article{Yao96,
author = {Y.Y. Yao and T.Y. Lin},
title = {Generalization of rough sets using modal logics},
journal = {Intelligent Automation \& Soft Computing},
volume = {2},
pages = {103--119},
year = {1996},
doi = {10.1080/10798587.1996.10750660}
}

@article{Tversky77,
    author =  "Tversky, Amos", 
    title = "Features of similarity",
    journal = "Psychological Review",
    volume = "84",
    year = 1977,
    pages = "327-352",
    doi = "10.1037/0033-295x.84.4.327"
}

@article{Kortelainen1994,
author = {Jari Kortelainen},
title = {On relationship between modified sets, topological spaces and rough sets},
journal = {Fuzzy Sets and Systems},
volume = {61},
pages = {91-95},
year = {1994},
doi = {10.1016/0165-0114(94)90288-7},
}

@inproceedings{Ganter07,
  author    = {Bernhard Ganter},
  title     = {Non-symmetric indiscernibility},
  booktitle = {Knowledge Processing and Data Analysis},
  series    = {Lecture Notes in Computer Science},
  volume    = {6581},
  pages     = {26--34},
  publisher = {Springer},
  year      = {2007},
  doi       = {10.1007/978-3-642-22140-8_2},
}

@Article{KumBan2015,
	author = "Kumar, Arun and Banerjee, Mohua",
	title = "Algebras of definable and rough sets in quasi order-based approximation spaces",
	journal = "Fundamenta Informaticae",
	volume = 141,
	pages = "37-55",
	year = 2015,
	doi = {10.3233/FI-2015-1262}
}

@article{Umadevi13,
  author    = {E. K. R. Nagarajan and D. Umadevi},
  title     = {Algebra of rough sets based on quasi order},
  journal   = {Fundamenta Informaticae},
  volume    = {126},
  pages     = {83--101},
  year      = {2013},
  doi       = {10.3233/FI-2013-872}
}

@INPROCEEDINGS{Qiao2012,
  author={Qiao, Quanxi},
  booktitle={2012 5th International Conference on BioMedical Engineering and Informatics}, 
  title={Topological structure of rough sets in reflexive and transitive relations}, 
  year={2012},
  pages={1585-1589},
  doi={10.1109/BMEI.2012.6513083}}

@article{JarvRadeRivi24,
author = {Jouni J{\"a}rvinen and S{\'a}ndor Radeleczki and Umberto Rivieccio},
title = {Nelson algebras, residuated lattices and rough sets: A survey},
journal = {Journal of Applied Non-Classical Logics},
volume = {34},
pages = {368--428},
year = {2024},
doi = {10.1080/11663081.2024.2336386}
}

@article{Ziarko1993,
author = {Wojciech Ziarko},
title = {Variable precision rough set model},
journal = {Journal of Computer and System Sciences},
volume = {46},
pages = {39-59},
year = {1993},
doi = {10.1016/0022-0000(93)90048-2},
}

@article{OrlPaw84,
author = {Ewa Or{\l}owska and Zdzis{\l}aw Pawlak},
title = {Representation of nondeterministic information},
journal = {Theoretical Computer Science},
volume = {29},
pages = {27-39},
year = {1984},
doi = {10.1016/0304-3975(84)90010-0},
}

@incollection{Pagl97,
	Address = {Berlin},
	Author = {P. Pagliani},
	Booktitle = {{I}ncomplete {I}nformation: {R}ough {S}et {A}nalysis},
	Editor = {E. Orlowska},
	Pages = {109--190},
	Publisher = {Physica-Verlag},
	Title = {Rough set systems and logico-algebraic structures},
	Year = {1997},
    doi = {10.1007/978-3-7908-1888-8_6}
    }

@article{SkowStep96,
author = {Skowron, Andrzej and Stepaniuk, Jaroslaw},
title = {Tolerance Approximation Spaces},
year = {1996},
volume = {27},
journal = {Fundamenta Informaticae},
pages = {245–253},
doi = {10.3233/FI-1996-27231},
}

@article{SlowVand00,
  author={Slowinski, R. and Vanderpooten, D.},
  journal={IEEE Transactions on Knowledge and Data Engineering}, 
  title={A generalized definition of rough approximations based on similarity}, 
  year={2000},
  volume={12},
  pages={331-336},
  doi={10.1109/69.842271}}

@article{SyauJia12,
    author = "Yu-Ru Syau and Lixing Jia",
    title = "Generalized rough sets based on reflexive relations",
    journal = "Communications in Information and Systems",
    volume = 12,
    year = 2012,
    pages = "233–249",
}

@misc{ManRad20,
      title={Algebraic approach to directed rough sets}, 
      author={Mani, A. and Radeleczki, S{\'a}ndor},
      year={2020},
      eprint={2004.12171},
      archivePrefix={arXiv},
      primaryClass={cs.LO},
      url={https://arxiv.org/abs/2004.12171}, 
}
\end{document}